\documentclass[sn-mathphys,Numbered]{sn-jnl}

\usepackage{graphicx}
\usepackage{comment}
\usepackage{subcaption}
\usepackage{amsmath,amssymb,amsfonts}
\usepackage{amsthm}
\usepackage{mathrsfs}
\usepackage[title]{appendix}
\usepackage{xcolor}
\usepackage{textcomp}
\usepackage{manyfoot}
\usepackage{booktabs}
\usepackage{listings}
\usepackage{setspace}
\usepackage{pdfpages}
\usepackage{float}
\usepackage[normalem]{ulem}

\usepackage{multirow}
\usepackage{array}
\usepackage{booktabs}
\usepackage{makecell}
\usepackage{tabularx}

\usepackage{algorithm}
\usepackage{algorithmicx}
\usepackage{algpseudocode}

\usepackage{cleveref}

\newtheorem{theorem}{Theorem}

\newtheorem{lemma}{Lemma}
\newtheorem{assumption}{Assumption}
\newtheorem{corollary}{Corollary}

\theoremstyle{definition}

\theoremstyle{remark}

\renewcommand\proofname{\textbf{Proof}}

\newcommand{\N}{\mathcal{N}}

\newcommand{\F}{\mathcal{F}}
\newcommand{\E}{\mathbb{E}}
\newcommand{\R}{\mathbb{R}}

\newcommand{\kg}{\kappa_{\nabla}}
\newcommand{\kuno}{\kappa_1}
\newcommand{\kdue}{\kappa_2}
\newcommand{\ktre}{\kappa_3}
\newcommand{\kfour}{\kappa_4}
\newcommand{\kfive}{\kappa_5}
\newcommand{\ksei}{\kappa_6}

\usepackage{textcase}
\newcommand{\name}{\textsc{psg\_leco}}
\newcommand{\names}{\textsc{psg\_leco }}
\newcommand{\ipas}{\textsc{ipas}}
\newcommand{\sqp}{\textsc{sqp\_adam}}

\begin{document}


\title{A Projected Stochastic Gradient Method for Finite-Sum Problems with Linear Equality Constraints}

\author[1]{\orcid{https://orcid.org/0000-0001-5195-9295}\fnm{Nata\v sa} \sur{Krklec Jerinki\'c}}
\email{natasa.krklec@dmi.uns.ac.rs}

\author[2]{\orcid{https://orcid.org/0000-0003-4826-1114}\fnm{Benedetta} \sur{Morini}}
\email{benedetta.morini@unifi.it}

\author[2]{\orcid{https://orcid.org/0000-0002-9213-3622}\fnm{Mahsa} \sur{Yousefi}}
\email{mahsa.yousefi@unifi.it}

\affil[1]{
\orgdiv{Department of Mathematics and Informatics, Faculty of Sciences},
\orgname{University of Novi Sad},
\orgaddress{
\street{Trg D. Obradovi\' ca 4},
\city{Novi Sad},
\postcode{21000},
\country{Serbia}}}

\affil[2]{
\orgdiv{Department of Industrial Engineering},
\orgname{University of Florence},
\orgaddress{
\street{Viale G.B. Morgagni 40},
\city{Florence},
\postcode{50134},
\country{Italy}}}


\abstract{
A stochastic gradient method for finite-sum minimization subject to deterministic linear constraints is proposed and analyzed. The procedure presented adapts the projected gradient method on a convex set to the use of both a stochastic gradient and a possibly inexact projection map. Under standard assumptions in the field of stochastic gradient methods, we provide theoretical results in agreement with the theory for unconstrained problems. Numerical results are presented to show the practical behavior of the procedure.
}

\keywords{
Constrained finite-sum minimization;
stochastic gradient;
exact and inexact projection.
}

\pacs[MSC Classification]{
90C30; 90C06; 90C53; 90C90; 65K05}

\maketitle

\section{Introduction}\label{Sec.intro}
We consider the finite-sum optimization problem with linear equality constraints 
 \begin{equation} \label{problem}
\min_{x \in S} f(x)=\frac{1}{N}  \sum_{i=1}^{N}  f_i(x), \quad S=\{x \in \mathbb{R}^n \; | \; Ax=b\},
\end{equation}
where the functions  $ f_i:\mathbb{R}^n \to \mathbb{R},\, i=1,...,N$, are continuously-differentiable, $A \in \mathbb{R}^{m \times n}$, $m<n$,  is a full row-rank matrix  and $b$ is a vector in $\mathbb{R}^m$.

Problems of minimizing finite-sums are often encountered in applications, such as least-squares approximation and Machine Learning within training phase where parameters of a model function are optimized. This usually assumes a large number of data which corresponds to a large $N$ in \eqref{problem}. The so-called Big Data setup motivates stochastic optimization approach since evaluating the whole function and/or its derivatives is too expensive to be treated by classical, deterministic methods. This raised a number of  first-order stochastic strategies proposing different function and/or gradient approximations \cite{BOTTOU}. 
Moreover, analogously to deterministic optimization,  second-order information can speed up convergence, and this yielded a new direction in stochastic optimization research based on appropriate Hessian. A special case of this approach leans on spectral methods and stochastic  Barzilai-Borwein step-sizes, see e.g., \cite{SVRGBB,VRIT,Slises,BMY_JCAM}.

Subsampling is a main-stream procedure for obtaining approximations to function and/or gradient evaluations; subsampling strategies range from the mini-batch approach where the sample size is usually small and fixed (e.g., \cite{SGD}),
to increasing sample size strategies where the full sample is eventually reached (e.g., \cite{FriedSch}),
with many variations in between (\cite{LISA,Bastin,VRIT} to name just a few). 
Determining a suitable step-size sequence is also a key component in stochastic optimization field.
In addition to prefixed constant step-size sequences and diminishing step-sizes, globalization strategies such as line search and trust region can be adapted to the stochastic framework (see e.g. \cite{sirtr, PS,TRish})
but the majority of such approaches require at least approximate function evaluations. 

Finite-sum problems can also come with constraints incorporating prior to knowledge and physical meaning \cite{curtis_2026,CurtRobiZhou24, curtis_siopt, mahoney_siopt}. 
In this work, we propose a stochastic projected gradient method for problem (\ref{problem}) based on the projected gradient method. Function evaluations are not required while a mini-batch approach is employed to compute  stochastic gradients. Regarding the mini-batch strategy, the set of indices in the sum (\ref{problem}) is divided into $r$ mini-batches which can be redefined possibly at each iteration. At any iteration $k$, each mini-batch can be selected with the same probability; a gradient estimate is calculated on the sampled mini-batch and by using an appropriate scaling that provides an unbiased estimate of the full gradient. 
In order to handle constraints,   we allow the use of inexact projections, especially suited in the presence of a large number of constraints (e.g., \cite{krejic2025ipas}); specifically, inexact but controlled projections  provide a nonmonotone decay of the infeasibility measure. 
The proposed algorithm is accompanied by a theoretical analysis that establishes convergence results, distinguishing constant and diminishing  step-size sequences, in line with the existing literature. Convexity of the objective function is not required. 
The numerical behavior of the method is shown considering some step-size selections in agreement with the theory.

{\textbf{Outline of the paper.}} 
\S \ref{Sec.method} provides the new method while \S \ref{Sec.analysis} is devoted to its theoretical analysis. \S  \ref{sec:offosqp} provides a comparison with related contributions in the literature. \S \ref{Sec.NumericalResults} is dedicated to  the experimental configuration while numerical results are presented in \S \ref{Sec.Num}.
\S \ref{sec.con} draws the main  conclusions.

{\textbf{Notations.}} The symbol $\|\cdot\|$ indicates the Euclidean norm.  $Pr(\cdot)$ and   $\E[\cdot]$ represent the probability function and  expected value, respectively.
\section{Description of the method}\label{Sec.method}

In this section,  we introduce our Projected Stochastic Gradient method for Linear Equality COnstrained problems, named the \names  method, to solve the problem  \eqref{problem}. We start by describing two main tasks in the algorithm: the construction of a stochastic gradient and the computation of the projection of a point in $\mathbb{R}^n$ onto $S$.

At the $k$-th iteration, given $x_k\in \mathbb{R}^n$, a stochastic gradient $g_k$ is computed as a mini-batch gradient using the following strategy.  
Let us define a partition $\{\N_k^i\}_{i=1}^r$ of $\N=\{1, \ldots,n\}$ into $r$ disjoint mini-batches at iteration $k$, namely
  \begin{equation} \label{partition}
\N=\bigcup_{i=1}^{r}\N_k^i, \quad \mbox{where} \quad \N_k^i \cap \N_k^j = \emptyset, \quad \mbox{for all } i \neq j. 
\end{equation}
This partition can be fixed or varying along the iterations. 
Then we let  $g_k^{(i)}$ be the eligible mini-batch gradients associated with the partition in (\ref{partition}), i.e.,  
\begin{equation} \label{gki}
g_k^{(i)}=\frac{r}{N}  \sum_{j\in \N_k^i}  \nabla f_j(x_k), \quad i=1,\ldots,r,
\end{equation}
and $g_k$ be our stochastic gradient that corresponds to a uniformly and randomly selected mini-batch  $\N_k^i$ from the partition. By 
\begin{equation} \label{gkdistribution}
Pr\left( g_k=g_k^{(i)} \mid \F_k\right)=\frac{1}{r}, \quad i=1,\ldots,r, 
\end{equation}
where $Pr(\cdot)$ represents the probability of the outcomes, and $\F_k$ is a $\sigma$-algebra generated by  $x_0, \ldots, x_k$, i.e., by $g_0,\ldots, g_{k-1}$, we have  
\begin{equation}\label{unbiased_dk}
    \E[g_k\, |\F_k] = \nabla f(x_k).
\end{equation}

Regarding the projection map onto $S$, since  $A \in \mathbb{R}^{m \times n}$ has full rank, the orthogonal projection $\pi_S(y)$ of any given point $y\in \R^{n}$ onto \( S \)  takes the form
\begin{equation} \label{exactprojection}
\begin{aligned}
        \pi_{S}(y)&=y-A^T\lambda(y),     \\
    \lambda(y)&=(AA^T)^{-1} (Ay-b). 
\end{aligned}
\end{equation}
The computation of $\pi_{S}(y)$ is viable for moderate values of $m$ as it depends on the solution of the linear system 
\begin{equation}\label{res}  
      AA^T \, \widetilde{\lambda}(y) = Ay-b.  
\end{equation} 
More generally, it is advisable to allow for inexact projections of the form
\begin{equation}\label{inexactprojection} 
\begin{aligned}
  \widetilde{\pi}_{S}(y)&=y-A^T\widetilde{\lambda}(y), \\
     \widetilde{\lambda}(y) &=(AA^T)^{-1}( Ay-b+r(y)),   
\end{aligned}
\end{equation}
and $r(y)$ denotes the residual vector in the solution of (\ref{res}).

Algorithm \ref{alg1} sketches the $k$-th iteration of our procedure. Step 1 indicates the hyper-parameters required for execution: two nonnegative sequences $\{\eta_k\}$ and $\{\mu_k\}$ that control inexactness of the projection onto $S$, a positive step-size related sequence $\{\alpha_k\}$, two positive scalars $\delta_{\ell},\, \delta_u$ that define the projection $\pi_{\delta}(\cdot)=\max\left\{ \delta_{\ell}, \min\left\{\cdot, \delta_u\right\}\right\}$ employed in the computation of the step-size.

Step 3 refers to the construction of the stochastic gradient $g_k$ as described above. Steps 4 and 5 concern the computation of the  iterate  $x_{k+1}$. Specifically, first, the vector $y_k$ is formed using the step-length $\Delta_k$ fixed in the previous iteration, then  $y_k$ is projected onto $S$ and this gives rise to the new iterate $x_{k+1}$. Inequality
(\ref{resb}) controls the accuracy in the calculation of the projection of $y_k$ by means of scalars $\eta_k$ and $\mu_k$. We observe that if $\eta_k=\mu_k=0$, $\forall k$, then the iterates $x_k$, $k\ge 1$, are feasible irrespective of $x_0$.

Steps 6 and 7 are devoted to the  computation of the step-length to be used at the subsequent iteration. The choice of  a  (positive)   scalar $\delta_{k}$ offers a variety of options and we discuss some possible adaptive  choices in Section~\ref{Sec.NumericalResults}.

At the end of iteration $k$, $\Delta_{k+1}$ is formed by means of $\alpha_{k+1} $ and $ \pi_{\delta}(\delta_{k})$. Consequently, $\Delta_{k+1}$ is deterministic conditioning on $x_{k+1}$ and this feature is crucial in the analysis of the procedure. The predetermined  sequences $\{\alpha_k\}$ and $\{\mu_k\}$ affect the convergence properties, as shown in the subsequent theoretical analysis.

We conclude this section by giving more insight into  the rule (\ref{resb}) in the \name\ Algorithm. Inexact projections are convenient in case the dimension $m$ is large as  $\widetilde \pi_S(\cdot)$ can be computed using linear iterative solvers, such as the Conjugate Gradient method \cite{CG}. Since we accept every iterate without performing an acceptance test, it holds (see the next  Lemma \ref{feasibility})
$$\|Ax_{k+1}-b\|=\|r(y_k)\|,$$ 
where $r(y_k)$ is given in (\ref{inexactprojection}). Thus, the inequality (\ref{resb}) implies an explicit relation between the infeasibility measure at $x_k$ and $x_{k+1}$, i.e., $\|Ax_{k}-b\|$  and $\|Ax_{k+1}-b\|$, respectively.
In addition to such characteristic, the inequality  (\ref{resb}) is inspired by the standard control $\|r(y_k)\|\le \chi_k \|Ay_k-b\|$, $ \chi_k\in (0,1)$, for the approximate solution of the linear system (\ref{res}).
In fact, by the definition of $y_k$, i.e. $y_k=x_k - \Delta_k g_k$, it holds
$$
\|r(y_k)\|\le \chi_k \|Ay_k-b\|\le \chi_k \|Ax_k-b\|+\chi_k\Delta_k\|Ag_k\|.
$$
Now,  (\ref{resb}) rephrases the above inequality using the prescribed term $\mu_k$ instead of the random quantity $\chi_k\Delta_k\|Ag_k\|$.

Finally, we note that inexact projections are   allowed in \cite{krejic2025ipas} by imposing a control of the form $\|r(y_k)\| \leq \eta_k^{{\rm IPAS}}$,  where $r(y_k)$ is given in (\ref{inexactprojection}), the sequence $\{\eta_k^{{\rm IPAS}}\}$ is prefixed and  $\sum_{k=0}^\infty (\eta_k^{{\rm IPAS}})^2 < \infty$. Contrary to \ipas,  our upper bound on $\|r(y_k)\|$ is adaptive due to the presence of $x_k$. The assumptions on the sequences $\{\eta_k\}$ and $\{\mu_k\}$ required to ensure convergence to a stationary point of problem \eqref{problem} are introduced in the next section.

\smallskip
\begin{algorithm}[t]
\small
\begin{algorithmic}[1]
\State{Choose an initial iterate $x_0 \in \mathbb{R}^n$,    a positive sequence $\{\alpha_k\}$, nonnegative sequences  $\{\eta_k\}$ and $\{\mu_k\}$ with $\eta_k\in [0, \eta)$,  $  \eta<1$, positive scalars $\delta_{\ell}<\delta_u<\infty$, and an initial step-size  $\Delta_0\in [\alpha_0\delta_{\ell},\, \alpha_0 \delta_u ]$.}

\For{$k = 0,1,\ldots$}
\State{Select uniformly at random a mini-batch $\mathcal{N}_k^i$ from the partition 
$\{\mathcal{N}_k^1, \ldots, \mathcal{N}_k^r\}$ as in 
\hspace*{10pt} \eqref{partition}, and set $g_k = g_k^{(i)}$ as in \eqref{gki}.}
\State{Set $y_k=x_k - \Delta_k g_k$.}
\State{Compute $x_{k+1}=\widetilde \pi_S(y_k)$ as in \eqref{res}, where $r(y_k)$ satisfies
\begin{equation}\label{resb}
\|r(y_k)\|\leq \eta_k \|Ax_k-b\| +\mu_k.
\end{equation}
}
\State{Choose a  scalar $\delta_{k}$.} 
\State{Compute $ \Delta_{k+1} = \alpha_{k+1} \pi_{\delta}(\delta_{k})=\alpha_{k+1}\max\left\{ \delta_{\ell}, \min\left\{\delta_k, \delta_u\right\}\right\}$.} 
\State{Set $k = k+1$.}
\EndFor
\end{algorithmic}
\caption{{\sc {\bf  PSG\_LECO}} \label{alg1}} 
\end{algorithm}


\section{Theoretical Analysis}\label{Sec.analysis}

In this section, we analyze the theoretical properties of the \names method. We make the following standard assumptions on the objective function.
\vskip 5pt
\begin{assumption}\label{A1}  
 The objective function $f:\mathbb{R}^{n} \to \mathbb{R}$ is continuously differentiable on $\mathbb{R}^n$. The gradient of $f$ is Lipschitz continuous with constant $L>0$. 
\end{assumption}
\vskip 5pt
\begin{assumption}\label{A2}
 The objective function $f:\mathbb{R}^{n} \to \mathbb{R}$ is bounded from below by $f^*$,
\begin{equation} \label{boundedf}
f(x) \geq f^*, \qquad \forall\   x \in \mathbb{R}^n.
\end{equation}

\end{assumption}
\medskip
The following result introduces an optimality measure $d(x)$ for problem (\ref{problem})  which will be used in our subsequent analysis.
\medskip

\begin{theorem} \label{th.theorem1v2} 
Assume that $f$ is continuously differentiable in an open set containing $S$ and let 
\begin{equation}\label{d_def}
d(x)=\pi_{S}(x-\nabla f(x))-x.
\end{equation}
Then, it holds $d(\bar x) = 0$, $\bar x\in S$, if and only if $\bar x$ is a stationary point for \eqref{problem}.
\end{theorem}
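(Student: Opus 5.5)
We will prove both implications by computing $d(\bar x)$ explicitly from the closed form \eqref{exactprojection} and comparing it with the first-order (KKT) conditions for \eqref{problem}. Since the constraints are linear, stationarity of $\bar x\in S$ means that $\nabla f(\bar x)\in \mathrm{range}(A^T)$, i.e.\ there is $\lambda\in\R^m$ with $\nabla f(\bar x)=A^T\lambda$. Equivalently, $P\nabla f(\bar x)=0$, where $P=I-A^T(AA^T)^{-1}A$ is the orthogonal projector onto $\ker A$.

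\emph{Key computation.} Take $\bar x\in S$, so $A\bar x=b$, and set $y=\bar x-\nabla f(\bar x)$. Then
\[
Ay-b=-A\nabla f(\bar x), \qquad \lambda(y)=-(AA^T)^{-1}A\nabla f(\bar x).
\]
Substituting into \eqref{exactprojection} gives
\[
\pi_S(y)=\bar x-\nabla f(\bar x)+A^T(AA^T)^{-1}A\nabla f(\bar x),
\]
and therefore
\[
d(\bar x)=-\bigl(I-A^T(AA^T)^{-1}A\bigr)\nabla f(\bar x)=-P\nabla f(\bar x).
\]
Full row rank of $A$ guarantees that $AA^T$ is invertible, so this formula is well defined.

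\emph{Conclusion of both directions.} Since $P$ is the orthogonal projector onto $\ker A$, we have $P v=0$ if and only if $v\in(\ker A)^\perp=\mathrm{range}(A^T)$.
\begin{itemize}
\item If $d(\bar x)=0$, then $\nabla f(\bar x)=A^T\lambda$ with $\lambda=(AA^T)^{-1}A\nabla f(\bar x)$, which is exactly the KKT condition, so $\bar x$ is stationary.
\item Conversely, if $\nabla f(\bar x)=A^T\lambda$ for some $\lambda$, then $P A^T\lambda=A^T\lambda-A^T(AA^T)^{-1}AA^T\lambda=0$, so $d(\bar x)=0$.
\end{itemize}
Because the constraints are affine, no constraint qualification is needed: the KKT conditions are exactly first-order stationarity, namely $\nabla f(\bar x)^T v=0$ for all $v\in\ker A$.

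\emph{Alternative route.} One can avoid the explicit formula and use the variational characterization of the projection onto the affine set $S$: $z=\pi_S(y)$ if and only if $z\in S$ and $y-z\perp \ker A$. With $z=\bar x$, this reads $-\nabla f(\bar x)\perp\ker A$, which is again stationarity.

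The only point requiring care is making precise the definition of stationarity, i.e.\ the equivalence between $\nabla f(\bar x)\in\mathrm{range}(A^T)$ and $\nabla f(\bar x)\perp\ker A$. This is the fundamental theorem of linear algebra, so I expect no real obstacle.
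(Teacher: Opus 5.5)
Your proof is correct, but it does not follow the paper's route. The paper gives no proof of its own: it cites Lemma~2.1 of Birgin, Mart\'inez and Raydan. That result holds for an arbitrary closed convex set $\Omega$ and rests on the variational characterization of the projection, $(z-\pi_\Omega(z))^T(w-\pi_\Omega(z))\le 0$ for all $w\in\Omega$. There, stationarity means $\nabla f(\bar x)^T(w-\bar x)\ge 0$ for all $w\in\Omega$.

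You instead use the affine structure of $S$ and the closed form \eqref{exactprojection} to obtain the explicit identity $d(\bar x)=-P_A\nabla f(\bar x)$ for $\bar x\in S$. After that, the equivalence reduces to $\ker P_A=\mathrm{range}(A^T)$.

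Your route gives a self-contained, elementary proof, and its key identity is exactly \eqref{d2} with $\widetilde e(\bar x)=0$, which the paper needs later anyway. The cited route gives generality: no closed form is needed and any closed convex set is allowed. Taking $w=\bar x$ in the same inequality also yields the descent property $d(\bar x)^T\nabla f(\bar x)\le-\|d(\bar x)\|^2$ of \eqref{descentd(xk)ines_feas}.

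Your ``alternative route'' is precisely the cited argument specialized to an affine set, where the variational inequality collapses to $y-\pi_S(y)\perp\ker A$. The one point to make explicit there is that the one-sided condition $\nabla f(\bar x)^Tv\ge 0$ on the subspace $\ker A$ forces $\nabla f(\bar x)^Tv=0$ (apply it to $\pm v$). This reconciles the convex-set definition of stationarity with your condition $\nabla f(\bar x)\perp\ker A$.
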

\begin{proof} 
See  \cite[Lemma 2.1]{birgin2000nonmonotone}.
\end{proof}

Under suitable assumptions, our analysis will provide the following results which are in accordance with the corresponding algorithms for unconstrained optimization:
each limit point of the sequence $\{x_k\}$ is feasible;
if $\alpha_k=\alpha$, for all $k$, then  the expected sum of average-squared norms of $d(x_k)$ is bounded and decreases with $\alpha$;
if $\{\alpha_k\}$ is a diminishing sequence, then  $\|d(x_k)\|$ cannot stay bounded away from zero almost surely.

The first step in our analysis is to characterize the infeasibility of the iterates generated by Algorithm~\ref{alg1}. To this end, we introduce the measures 
\begin{equation}
    \label{e}
  \widetilde e(y)=Ay-b, \qquad    e(y)=\|\widetilde e(y)\|,
\end{equation}
and make the following assumption on the sequences $\{\eta_k\}$ and $\{\mu_k\}$.
\smallskip
\begin{assumption}\label{Ainf}
The nonnegative sequence $ \{\eta_k\}$   satisfies  $ \eta_k\le  \eta<1$, $\forall k\ge0$. The nonnegative sequence $\{\mu_k\}$  converges to zero R-linearly. 
\end{assumption}
\smallskip
\begin{lemma} \label{feasibility}
Suppose that Assumptions  \ref{A1} and  \ref{Ainf} hold. Then,
  \begin{description}
 \item{a)} For $y\in \mathbb{R}^n$, it holds
  \begin{equation}\label{infeas_r}
   \widetilde e(\widetilde \pi_S(y)) =  -r(y).
 \end{equation}
\item{b)}  The iterate $x_{k+1}$ satisfies  
  \begin{equation}\label{infeas_x}
   \widetilde e(x_{k+1}) =  -r(y_k), \quad  \text{for all} \quad k\ge 0.
 \end{equation}
\item{c)}  It holds
\begin{equation}\label{infeas_rid}
   e(x_{k})\le  \kappa_e q^{k}, \quad  \text{for all} \quad k\ge 0,
\end{equation}
for some constant $\kappa_e>0$ and $q \in ( \eta,1)$ and every limit point of $\{x_k\}$ is feasible.   
\item{d)}  It holds 
\begin{equation}\label{infeas_4}
   e(x_k)\le \kappa_e, \quad  \text{for all} \quad k\ge 0.
\end{equation}
\end{description}
\end{lemma}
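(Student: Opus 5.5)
Part a) is a direct computation from \eqref{inexactprojection}. For $y\in\mathbb{R}^n$ we have
\[
A\widetilde\pi_S(y)-b = Ay-b - AA^T(AA^T)^{-1}(Ay-b+r(y)) = -r(y),
\]
which is \eqref{infeas_r}. Part b) follows by applying a) with $y=y_k$, because Step 5 of Algorithm~\ref{alg1} sets $x_{k+1}=\widetilde\pi_S(y_k)$.

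For part c), combine b) with the accuracy rule \eqref{resb} to get the scalar recursion
\[
e(x_{k+1})=\|r(y_k)\|\le \eta_k e(x_k)+\mu_k\le \eta\, e(x_k)+\mu_k,\qquad k\ge0,
\]
using $\eta_k\le\eta$ from Assumption~\ref{Ainf}. Unrolling it gives
\[
e(x_k)\le \eta^k e(x_0)+\sum_{j=0}^{k-1}\eta^{k-1-j}\mu_j .
\]
By Assumption~\ref{Ainf}, R-linear convergence of $\{\mu_k\}$ means there are $C>0$ and $\rho\in(0,1)$ with $\mu_j\le C\rho^j$ for all $j$. Fix $q\in(\max\{\eta,\rho\},1)$; this also handles $\eta=0$, since then any $q\in(\rho,1)$ works.

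With this choice,
\[
\sum_{j=0}^{k-1}\eta^{k-1-j}C\rho^j\le C\sum_{j=0}^{k-1}q^{k-1-j}q^j = Ckq^{k-1}.
\]
The factor $k$ is the only real obstacle. To absorb it, pick an intermediate $\bar q\in(\max\{\eta,\rho\},1)$, redo the estimate with $\bar q$ in place of $q$, and then take $q\in(\bar q,1)$. The bound $k\bar q^{k-1}\le c\,q^k$ holds with $c=\sup_k k\bar q^{k-1}q^{-k}<\infty$, because $k(\bar q/q)^k\to0$. Alternatively, one can keep the two rates separate:
\[
\sum_{j=0}^{k-1}\eta^{k-1-j}\rho^j\le \frac{q^{k-1}}{1-\max\{\eta,\rho\}/q},
\]
which follows by factoring out $q^{k-1}$ and summing a geometric series in $\max\{\eta,\rho\}/q$. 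Either way we obtain $e(x_k)\le\kappa_e q^k$ with, for instance, $\kappa_e=e(x_0)+C/(q-\max\{\eta,\rho\})$. If $e(x_0)=0$ and all $\mu_k=0$, any positive $\kappa_e$ works.

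Feasibility of limit points then follows from continuity of $e$. If $x_{k_j}\to\bar x$, then $e(\bar x)=\lim_j e(x_{k_j})\le\lim_j \kappa_e q^{k_j}=0$, so $A\bar x=b$.

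Part d) is immediate from c), since $q^k\le1$ for all $k\ge0$.

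Assumption~\ref{A1} is not actually needed in this argument.
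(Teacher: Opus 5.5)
Your proof is correct and follows the paper's argument. Parts a) and b) are the same direct computation. For c), both use the recursion $e(x_{k+1})\le \eta\, e(x_k)+\mu_k$, unrolled to $e(x_k)\le\eta^k e(x_0)+\sum_{j=0}^{k-1}\eta^{k-1-j}\mu_j$, and d) is immediate.

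The only difference is that the paper cites \cite[Lemma 4.2]{NKNKJ-NM} for the R-linear decay of the convolution term, whereas you prove it directly. Your second route, using $\eta^{k-1-j}\le q^{k-1-j}$ and summing the geometric series in $\rho/q$, already avoids the factor $k$. It also gives the explicit constant $\kappa_e=e(x_0)+C/(q-\max\{\eta,\rho\})$.
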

\begin{proof} 
(a) From (\ref{inexactprojection}), (\ref{res}), and (\ref{e}), we get the expression in (\ref{infeas_r}) as follows:
$$
\widetilde e(\widetilde \pi_S(y)) =A\widetilde \pi_S(y)- b = A\left (y-A^T\tilde{\lambda}(y)\right )-b=-(AA^T\tilde{\lambda}(y)-Ay+b).
$$
\vskip 5pt \noindent
b) Equality (\ref{infeas_x}) follows from (\ref{infeas_r}) and Step 4 of Algorithm \ref{alg1}.
\vskip 5pt \noindent
c)
Equations 
(\ref{infeas_x}), (\ref{resb}), (\ref{res}) and Assumption \ref{Ainf} give
$$e(x_{k+1})\le  \eta e(x_k)+\mu_k.$$ Therefore, for all $k$, there holds   
\begin{equation}\label{new1} e(x_k)\le  \eta ^k e(x_0)+v_k,\qquad  v_k=\sum_{i=1}^{k}  \eta ^{i-1} \mu_{k-i}.\end{equation}
Since $\{\mu_k\}$ converges to zero R-linearly and $ \eta <1$,  $\{v_k\}$ converges to zero R-linearly which implies 
\eqref{infeas_rid}, see e.g., \cite[Lemma 4.2]{NKNKJ-NM}.
Thus, $\lim_{k\rightarrow \infty}   e(x_{k})=0$.
\vskip 5pt \noindent
d) The claim trivially follows from (\ref{infeas_rid}).
\end{proof}

We note that feasibility is eventually enforced, i.e., $\lim_{k \to \infty} e(x_k)=0$, whenever $\{\mu_k\}$ tends to zero. On the other hand, the summability of $\sum_{k=0}^{\infty}\mu_k$ is required to prove the main Theorem \ref{th.theorem2}; this request motivates our Assumption \ref{Ainf} on $\{\mu_k\}$.

 Now  we proceed with an intermediate result on  
the step $d_k$  taken at iteration $k$
\begin{equation}\label{step} 
    d_k=x_{k+1}-x_k.
\end{equation}
We denote   
\begin{equation}\label{e.matrices}
    D=A^T(AA^T)^{-1}, \qquad  P_A = I - DA, \qquad \kappa_D=\|D\|,
\end{equation}
and introduce the following assumptions.
\medskip
\begin{assumption}\label{Anabla}
For some positive constant $\kg$, the sequence  $\{x_k\}$ is either feasible or such that $$\|\nabla f(x_k)\| \leq \kg, \qquad \text{for all} \quad k\ge 0.$$  
\end{assumption}
\smallskip
Note that bounded gradients are assumed only when the iterates are infeasible. We now consider the following assumption, which states that the conditional variance of the sampled gradient is uniformly bounded. 

\medskip
 
\begin{assumption}\label{A3}
There exists a positive constant  \( \nu > 0 \)  such that 
\begin{equation}\label{var_bound1}
    \E\big[\|g_k-\nabla f (x_k)\|^2\, |\F_k\big] \leq \nu.
\end{equation}
\end{assumption}

\medskip

\begin{lemma}\label{lem.lemma4IPSG} 
Let $\{x_k\}$ be generated by Algorithm \name. Suppose that Assumption  \ref{A1} holds.
\begin{description}
\item{a)} The  step $d_k$ defined in (\ref{step}) satisfies
\begin{equation} 
\label{dkEnew}
    \E{\big[d_k\, | \F_k\big]}
    =  \Delta_{k} d(x_k)-(1-\Delta_k)D \widetilde{e}(x_k)  - D \E{\big[r(y_k)\, | \F_k\big]},
\end{equation}
where $D$ is defined in (\ref{e.matrices}).
\item{b)} Suppose further that Assumptions \ref{Ainf} and \ref{Anabla}   hold. Then, $d(x_k)$ defined in (\ref{d_def}) satisfies
\begin{equation}\label{descentd(xk)ines}
    d(x_k)^T \nabla f(x_k) \leq -\|d(x_k)\|^2 + \kuno e(x_k),   
\end{equation}
for some positive  constant  $\kuno$.
\item{c)} Suppose further that Assumptions \ref{Ainf}, \ref{Anabla} and \ref{A3} hold, and that $(1-\Delta_k)^2\le \Delta^*,$ for all $k$ and some positive $\Delta^*$. Then,  
\begin{equation}\label{var_bound2}
    \E\big[\|d_k\|^2\, |\F_k\big] \leq \kdue \Delta_k^2+ \ktre   q^{2k} + \kfour \mu_k +  2 \Delta_k^2\|d(x_k)\|^2,
\end{equation}
for some positive constants $\kdue, \, \ktre,\kfour$.
\end{description}
\end{lemma}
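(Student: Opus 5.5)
Part a) is an algebraic identity. From \eqref{inexactprojection} with $y_k=x_k-\Delta_k g_k$, and writing $P_A=I-DA$ and $D=A^T(AA^T)^{-1}$, we get
\[
x_{k+1}=y_k-D(Ay_k-b)-Dr(y_k)=P_A y_k+Db-Dr(y_k).
\]
Hence
\[
d_k=-\Delta_k P_A g_k - D\widetilde e(x_k)-Dr(y_k).
\]
For the exact map, $\pi_S(x-\nabla f(x))=P_A(x-\nabla f(x))+Db$, so
\[
d(x)=-P_A\nabla f(x)-D\widetilde e(x).
\]
Therefore $-\Delta_k P_A\nabla f(x_k)=\Delta_k d(x_k)+\Delta_k D\widetilde e(x_k)$. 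Taking conditional expectation, using \eqref{unbiased_dk} and the fact that $\Delta_k$ and $x_k$ are $\F_k$-measurable, yields \eqref{dkEnew}.

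For b), set $z=x-\nabla f(x)$ and $p=\pi_S(z)$, so that $d(x)=p-x$. I would use the variational characterization of the projection, but against the feasible point $\pi_S(x)$ rather than $x$, since $x$ need not be feasible. This gives $(z-p)^T(\pi_S(x)-p)\le 0$. Write $\pi_S(x)-p=(x-p)+(\pi_S(x)-x)$ and note $\pi_S(x)-x=-D\widetilde e(x)$. Expanding gives
\[
(-d-\nabla f)^T(-d-D\widetilde e)\le 0,
\]
that is,
\[
\|d\|^2+d^T\nabla f\le -(d+\nabla f)^T D\widetilde e\le (\|d\|+\|\nabla f\|)\kappa_D e(x).
\]
If $x_k$ is feasible, $e(x_k)=0$ and the bound holds. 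Otherwise Assumption \ref{Anabla} gives $\|\nabla f(x_k)\|\le\kg$. In addition, $\|d(x_k)\|\le\|P_A\nabla f\|+\kappa_D e\le \kg+\kappa_D\kappa_e$ by \eqref{infeas_4}. Taking $\kuno=\kappa_D(2\kg+\kappa_D\kappa_e)$ then gives \eqref{descentd(xk)ines}.

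For c), I start from the same expression for $d_k$ and rewrite it as
\[
d_k=\Delta_k d(x_k)-\Delta_kP_A(g_k-\nabla f(x_k))-(1-\Delta_k)D\widetilde e(x_k)-Dr(y_k).
\]
Applying $\|\sum_{i=1}^4 a_i\|^2\le 4\sum\|a_i\|^2$ would give the constant $4$ rather than $2$ in front of $\Delta_k^2\|d(x_k)\|^2$. To get exactly $2\Delta_k^2\|d(x_k)\|^2$, I split into $a=\Delta_k d(x_k)$ and $b$ equal to the remaining three terms, and use $\|a+b\|^2\le 2\|a\|^2+2\|b\|^2$. I then bound $\|b\|^2\le 3(\cdot)$ termwise:
\begin{itemize}
\item The noise term gives at most $3\Delta_k^2\nu$ by Assumption \ref{A3}, since $\|P_A\|\le1$.
\item The infeasibility term gives $3\Delta^*\kappa_D^2\kappa_e^2q^{2k}$ by \eqref{infeas_rid}.
\item The residual term is bounded using \eqref{resb}: $\|r(y_k)\|^2\le 2\eta^2e(x_k)^2+2\mu_k^2$. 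Its first part is again $O(q^{2k})$. For the second part, since $\mu_k\to0$ it is bounded by some $\bar\mu$, so $\mu_k^2\le\bar\mu\mu_k$.
\end{itemize}
Collecting terms yields \eqref{var_bound2} with $\kdue=6\nu$, and $\ktre,\kfour$ built from $\kappa_D,\kappa_e,\eta,\Delta^*,\bar\mu$.

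The main obstacle is part b) when $x$ is infeasible. Using $x$ itself as the comparison point in the projection inequality is invalid, so the argument has to go through $\pi_S(x)$, and it then requires the boundedness of both $\nabla f$ and $d(x_k)$ on infeasible iterates, which is exactly what Assumption \ref{Anabla} and Lemma \ref{feasibility} d) provide.
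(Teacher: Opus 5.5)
Your proof is correct. Parts a) and c) coincide with the paper's argument. You use the same closed forms for $d_k$ and $d(x_k)$, and in c) the same split $\|a+b\|^2\le 2\|a\|^2+2\|b\|^2$ with $a=\Delta_k d(x_k)$. The only difference in c) is that the paper weights the remaining three terms $2,4,4$ rather than your $3,3,3$, which gives $\kdue=4\nu$ instead of $6\nu$.

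Part b) is where you genuinely differ. The paper uses no projection inequality. It expands $\|d(x_k)\|^2$ using $P_AD=0$ and $P_A^T=P_A=P_A^2$ to obtain the exact identity $\nabla f(x_k)^Td(x_k)=-\|d(x_k)\|^2-\nabla f(x_k)^TD\widetilde e(x_k)+\|D\widetilde e(x_k)\|^2$. It then bounds the last two terms by $(\kg\kappa_D+\kappa_D^2\kappa_e)e(x_k)$.

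Your variational inequality against $\pi_S(x)$ in fact holds with equality here, since $z-p$ lies in the range of $A^T$ and $\pi_S(x)-p$ lies in the null space of $A$. Likewise, your term $-d(x_k)^TD\widetilde e(x_k)$ equals exactly $\|D\widetilde e(x_k)\|^2$, again because $P_AD=0$. So after expansion you reach the paper's identity. Because you bound that term by Cauchy--Schwarz instead, you need the extra a priori bound $\|d(x_k)\|\le\kg+\kappa_D\kappa_e$, and you end up with the slightly weaker constant $\kuno=\kappa_D(2\kg+\kappa_D\kappa_e)$.

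The trade-off is as follows. Your argument uses only the variational characterization of the projection, a feasible comparison point, and nonexpansiveness (for bounding $\|d\|$). It would therefore carry over to a general closed convex $S$, with $e(x)$ replaced by $\mathrm{dist}(x,S)$. The paper's argument exploits the explicit linear-algebraic form of $\pi_S$, which gives a shorter derivation and a sharper constant. In particular, the ``main obstacle'' you flag is not really one in this affine setting: with the closed-form projection, no comparison point is needed at all.
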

\begin{proof} 
a) First, we note that \eqref{step}, \eqref{e.matrices}, \eqref{inexactprojection} and \eqref{res} give
\begin{eqnarray}
d_k &=&\widetilde{\pi}_{S}(x_k-\Delta_k g_k)-x_k \nonumber\\
&=& x_k-\Delta_k g_k-D(Ax_k-\Delta_kA g_k-b+r(y_k))-x_k  \nonumber\\
& =& -\Delta_kP_Ag_k -D\widetilde e(x_k)- Dr(y_k),\label{step1}
\end{eqnarray}
and that (\ref{d_def}), (\ref{exactprojection})
give
\begin{equation}\label{d2}
d(x_k)
=-P_A\nabla f(x_k)-D\widetilde e(x_k).
\end{equation}
Hence, using \eqref{unbiased_dk} and that $\Delta_k$ is $\mathcal{F}_k$ -measurable, we have 
\begin{eqnarray*} 
\label{dkE}
    \E{\big[d_k\, | \F_k\big]} &=& -\Delta_k P_A \E{\big[g_k\, | \F_k\big]}-D \widetilde{e}(x_k)  -D \E{\big[r(y_k)\, | \F_k\big]} \\
    & = & -\Delta_kP_A \nabla f(x_k)-D \widetilde{e}(x_k) -D \E{\big[r(y_k)\, | \F_k\big]},
\end{eqnarray*}
and (\ref{d2}) concludes the proof.
\vskip 5pt \noindent
b) Using (\ref{d2}), $P_A D=0$, $P_A^T=P_A$ and $P_A^2=P_A$, it follows
\begin{eqnarray*}
    d(x_k)^T d(x_k)&=&\nabla f(x_k)^T P_A^2 \nabla f(x_k)+\|D\widetilde e(x_k)\|^2 \\ 
    &=&-\nabla f(x_k)^T d(x_k) -\nabla f(x_k)^TD\widetilde e(x_k) +\|D\widetilde e(x_k)\|^2.
\end{eqnarray*}
Hence, using \eqref{e}, \eqref{infeas_4} we obtain 
\begin{eqnarray*}
    \nabla f(x_k)^T d(x_k)&\le& -\|d(x_k)\|^2 + \|\nabla f(x_k)\|\,\kappa_De(x_k)
    +\kappa_D^2 e(x_k)^2 \\
    &\le& -\|d(x_k)\|^2 + \big(\kg\kappa_D+\kappa_D^2    \kappa_e \big)e(x_k).
\end{eqnarray*}
Thus, \eqref{descentd(xk)ines} holds with $\kuno=\kg\kappa_D+\kappa_D^2  \kappa_e$. 
\vskip 5pt \noindent
c) Using (\ref{step1}), (\ref{d2}), $\|P_A\|=1$, and (\ref{infeas_rid}) we obtain
\begin{eqnarray*}
    \|d_k-\Delta_kd(x_k)\|^2
    &=&\|-\Delta_k P_A(g_k-\nabla f(x_k))-(1-\Delta_k)D \widetilde e(x_k)-Dr(y_k)\|^2\nonumber \\
		   &\le&2\Delta_k^2 \|g_k-\nabla f(x_k)\|^2 +4 \|(1-\Delta_k)D \widetilde e(x_k)\|^2+ 4\|Dr(y_k) \|^2\nonumber \\
    & \le&    2\Delta_k^2 \|g_k-\nabla f(x_k)\|^2  +4\kappa_D^2  
   \Delta^* e(x_k)^2 +4\kappa_D^2 (\eta_ke(x_k)+\mu_k)^2  \nonumber\\
    &\le&     2 \Delta_k^2\|g_k-\nabla f(x_k)\|^2  +4\kappa_D^2 (\Delta^*+ \eta^2 ) \kappa_e^2 q^{2k} + 4\kappa_D^2 (2 \eta \kappa_e  q^k+\mu_k)\mu_k. 
\end{eqnarray*}
Now, using this last inequality, Assumption  \ref{A3} and the equation
\begin{eqnarray*}
\|d_k\|^2&=&\|d_k-\Delta_k d(x_k)+\Delta_k d(x_k)\|^2\le 2\|d_k-\Delta_k d(x_k)\|^2 +  2 \Delta_k^2\|d(x_k)\|^2 ,
\end{eqnarray*}
we obtain 
\begin{align}
    \E\big[\|d_k\|^2\, |\F_k\big] 
    &\leq 
    2 \E\big[\|d_k-\Delta_k d(x_k)\|^2\, |\F_k\big]
    +2\Delta_k^2 \E\big[\|d(x_k)\|^2\, |\F_k\big] \nonumber \\
    &\leq
    4\Delta_k^2 \E\big[\|g_k-\nabla f(x_k)\|^2\, |\F_k\big] +8\kappa_D^2 (\Delta^*+ \eta^2) \kappa_e^2 q^{2k} 
	\nonumber \\ 
    & \; + 8\kappa_D^2 (2 \eta \kappa_e q^k+\mu_k)\mu_k + 2\Delta_k^2\|d(x_k)\|^2. 
    \end{align}
Thus, the claim holds with $\kdue=  4\nu$, $\ktre= 8\kappa_D^2 (\Delta^*+ \eta^2 ) \kappa_e^2$,   $\kfour= 8\kappa_D^2 (2 \eta \kappa_e +\mu_{\max})$ where $\mu_{\max}=\max_k\, \{\mu_k\}$.
\end{proof}
\medskip

For sake of completeness, the following lemma rephrases the results above when  $x_0$ is feasible and the exact projection $\pi_S$ is used.
\medskip

\begin{corollary}
Let $\{x_k\}$ be generated by Algorithm \name. Suppose that Assumption \ref{A1} holds, $x_0\in S$, $\eta_k= \mu_k=0$ for all $k\ge 0$.
 \item{a)} The sequence $\{x_k\}$ is feasible.
 \item {b)} The  step $d_k$ defined in (\ref{step}) satisfies
\begin{equation} \label{dkE_feas}
    \E{\big[d_k\, | \F_k\big]}
    =  \Delta_{k} d(x_k).
\end{equation}
\item{c)} The direction $d(x_k)$ defined in (\ref{d_def}) satisfies
\begin{equation}\label{descentd(xk)ines_feas}
    d(x_k)^T \nabla f(x_k) \leq -\|d(x_k)\|^2.  
\end{equation}
\item{d)} Suppose further that Assumption \ref{A3} holds and that $(1-\Delta_k)^2\le \Delta^*$ for all $k$ and some positive $\Delta^ *$. Then,  
\begin{equation}\label{var_bound2_fis}
    \E\big[\|d_k\|^2\, |\F_k\big] \leq \kdue \Delta_k^2+  2 \Delta_k^2\|d(x_k)\|^2, \qquad \text{for some} \;\;\; \kdue>0.
\end{equation}
\end{corollary}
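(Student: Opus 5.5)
The plan is to specialize the identities already obtained in the proofs of Lemma \ref{feasibility} and Lemma \ref{lem.lemma4IPSG} to the case $x_0\in S$, $\eta_k=\mu_k=0$. In this case every infeasibility term vanishes identically, so each item follows from the corresponding general formula.

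\emph{Item a).} Use induction on $k$. The base case $x_0\in S$ is given. For $k\ge 0$, condition (\ref{resb}) with $\eta_k=\mu_k=0$ forces $r(y_k)=0$. Then Lemma \ref{feasibility} b) gives $\widetilde e(x_{k+1})=-r(y_k)=0$, i.e.\ $x_{k+1}\in S$. This step actually holds irrespective of $x_0$, but we need $x_0\in S$ so that $\widetilde e(x_k)=0$ for every $k\ge 0$, including $k=0$.

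\emph{Item b).} Insert $\widetilde e(x_k)=0$ and $r(y_k)=0$ into (\ref{dkEnew}). We do not invoke Lemma \ref{lem.lemma4IPSG} a) as a black box, since it only assumes Assumption \ref{A1}, which is also our hypothesis. The two extra terms drop out and we get $\E[d_k\,|\,\F_k]=\Delta_k d(x_k)$. Equivalently, from (\ref{step1}) and (\ref{d2}) we have $d_k=-\Delta_k P_A g_k$ and $d(x_k)=-P_A\nabla f(x_k)$. The claim then follows from (\ref{unbiased_dk}) and the $\F_k$-measurability of $\Delta_k$.

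\emph{Item c).} Lemma \ref{lem.lemma4IPSG} b) requires Assumption \ref{Anabla}, which holds trivially here because the sequence is feasible. It also requires Assumption \ref{Ainf}, which holds with $\eta=0$ and $\mu_k\equiv 0$; the zero sequence converges R-linearly. However, it is cleaner to redo the computation directly. Using $d(x_k)=-P_A\nabla f(x_k)$ together with $P_A^T=P_A=P_A^2$, we get $d(x_k)^T\nabla f(x_k)=-\nabla f(x_k)^TP_A\nabla f(x_k)=-\|P_A\nabla f(x_k)\|^2=-\|d(x_k)\|^2$. This gives (\ref{descentd(xk)ines_feas}), in fact with equality.

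\emph{Item d).} From $d_k-\Delta_k d(x_k)=-\Delta_k P_A(g_k-\nabla f(x_k))$ and $\|P_A\|=1$, we obtain $\|d_k\|^2\le 2\Delta_k^2\|g_k-\nabla f(x_k)\|^2+2\Delta_k^2\|d(x_k)\|^2$. Taking conditional expectation and applying Assumption \ref{A3} yields (\ref{var_bound2_fis}) with $\kdue=2\nu$; the constant $4\nu$ from Lemma \ref{lem.lemma4IPSG} c) also works. Alternatively, one can simply read off (\ref{var_bound2}) with $\mu_k=0$, and with $\ktre q^{2k}$ dropped because $\kappa_e$ may be taken as $0$ once $e(x_k)=0$.

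The only mildly delicate point is this last one. The constants in Lemma \ref{feasibility} c) were stated with $\kappa_e>0$, so rather than arguing that $\kappa_e$ may be set to zero, I will prove d) directly as above. The hypothesis $(1-\Delta_k)^2\le\Delta^*$ is not needed in that direct argument and can just be carried along.
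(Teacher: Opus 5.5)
Your proposal is correct and follows essentially the paper's route: the paper gets (a) from the recursion (\ref{new1}) with $e(x_0)=0$ and $\mu_k=0$, and (b)--(d) by specializing Lemma~\ref{lem.lemma4IPSG}, which is what you do by setting $\widetilde e(x_k)=0$ and $r(y_k)=0$ in (\ref{step1}) and (\ref{d2}). Your direct derivation of (d) is slightly cleaner than a literal reading of (\ref{var_bound2}). That reading would leave the term $\ktre q^{2k}$, since $\kappa_e>0$. Going back to the intermediate inequality, as you do, removes this term, yields $\kdue=2\nu$, and does not need the hypothesis $(1-\Delta_k)^2\le\Delta^*$.
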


\begin{proof}
Item (a) follows from (\ref{new1}), while items (b)--(d) follow from Lemma~\ref{lem.lemma4IPSG}.
\end{proof}
\medskip
By the following theorem, we now analyze the behavior of the \names method under the assumption that the sequence $\{\alpha_k\}$ is diminishing.
\medskip

\begin{theorem} \label{th.theorem2} 
Suppose that Assumptions~\ref{A1}--\ref{A3} hold,    and that 
\begin{eqnarray}\label{alphakalpha0}
    \sum_{k=0}^\infty\alpha_k =\infty, \quad \sum_{k=0}^\infty \alpha_k^2 < \infty. 
\end{eqnarray}
Then, almost surely,
\begin{equation}\label{limTheorem0}
    \liminf_{k \rightarrow\infty} \|d(x_k)\| =0, 
\end{equation}
and the sequence of function values $\{f(x_k)\}$ converges. 
\end{theorem}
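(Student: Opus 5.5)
Our plan is to use the descent lemma from Assumption~\ref{A1}, take conditional expectations, and apply the Robbins--Siegmund supermartingale convergence theorem.

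\textbf{One-step bound.} By the Lipschitz continuity of $\nabla f$,
\[
f(x_{k+1})\le f(x_k)+\nabla f(x_k)^Td_k+\tfrac{L}{2}\|d_k\|^2 .
\]
Conditioning on $\F_k$ and using \eqref{dkEnew} gives
\[
\E[\nabla f(x_k)^Td_k\,|\F_k]=\Delta_k\nabla f(x_k)^Td(x_k)-(1-\Delta_k)\nabla f(x_k)^TD\widetilde e(x_k)-\nabla f(x_k)^TD\,\E[r(y_k)|\F_k].
\]
We bound the three terms as follows.
\begin{itemize}
\item The first term is at most $-\Delta_k\|d(x_k)\|^2+\Delta_k\kuno e(x_k)$, by \eqref{descentd(xk)ines}.
\item The last two terms vanish if the iterates are feasible. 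Otherwise Assumption~\ref{Anabla} applies, so $\|\nabla f(x_k)\|\le\kg$. Together with \eqref{resb} and \eqref{infeas_rid}, both terms are then bounded by a constant times $q^k+\mu_k$.
\end{itemize}
Here $|1-\Delta_k|$ is bounded because $\Delta_k\le\alpha_k\delta_u$ and $\alpha_k\to0$. The same fact gives the hypothesis $(1-\Delta_k)^2\le\Delta^*$ of Lemma~\ref{lem.lemma4IPSG}(c), which we then use to bound $\E[\|d_k\|^2|\F_k]$ via \eqref{var_bound2}. Since $\Delta_k\ge\alpha_k\delta_\ell$ and $\Delta_k\le \alpha_k\delta_u$, we obtain
\[
\E[f(x_{k+1})-f^*|\F_k]\le f(x_k)-f^*-\big(\delta_\ell\alpha_k-L\delta_u^2\alpha_k^2\big)\|d(x_k)\|^2+c\,\big(\alpha_k^2+q^k+\mu_k\big)
\]
for some constant $c>0$.

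\textbf{Supermartingale argument.} For all $k$ large enough, $L\delta_u^2\alpha_k\le\delta_\ell/2$, so the coefficient of $\|d(x_k)\|^2$ is at least $\tfrac{\delta_\ell}{2}\alpha_k$. The perturbation sequence $\alpha_k^2+q^k+\mu_k$ is deterministic and summable: $\sum_k\alpha_k^2<\infty$ by \eqref{alphakalpha0}, and $q^k$, $\mu_k$ are summable by Assumption~\ref{Ainf} (R-linear convergence). The quantity $f(x_k)-f^*$ is nonnegative by Assumption~\ref{A2}. Robbins--Siegmund then yields, almost surely, that $f(x_k)$ converges and that $\sum_k\alpha_k\|d(x_k)\|^2<\infty$.

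\textbf{Conclusion.} Suppose that on a set of positive probability $\liminf_k\|d(x_k)\|\ge\epsilon>0$. Then on that set $\sum_k\alpha_k\|d(x_k)\|^2\ge\epsilon^2\sum_{k\ge \bar k}\alpha_k=\infty$ by \eqref{alphakalpha0}, which contradicts the previous step. This proves \eqref{limTheorem0}.

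\textbf{Main obstacle.} The delicate step is controlling the infeasibility cross terms $\nabla f(x_k)^TD\widetilde e(x_k)$ and $\kuno e(x_k)$. These are not multiplied by $\alpha_k$, so they must be summable on their own. This is exactly where Assumption~\ref{Anabla} (bounded gradients in the infeasible case) and the R-linear decay of $e(x_k)$ from Lemma~\ref{feasibility}(c) are essential.
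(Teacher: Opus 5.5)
Your proposal is correct and follows the paper's proof essentially step for step. Both use the descent inequality, take conditional expectations with Lemma~\ref{lem.lemma4IPSG} and the bound $\alpha_k\delta_\ell\le\Delta_k\le\alpha_k\delta_u$, apply Robbins--Siegmund from an index $\bar k$ after which $L\delta_u^2\alpha_k\le\delta_\ell/2$, and conclude by contradiction with $\sum_k\alpha_k=\infty$. One small correction in the last step: $\liminf_k\|d(x_k)\|\ge\epsilon$ only gives $\|d(x_k)\|\ge\epsilon/2$ eventually, not $\ge\epsilon$; the argument is cleanest done pathwise on the probability-one event where $\sum_k\alpha_k\|d(x_k)\|^2<\infty$.
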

\begin{proof} 
Assumption \ref{A1} implies that 
\begin{equation}\label{eq.g_Lip}
    f(x) \leq f(y) + \nabla f(y)^T(x - y) +  \frac{L}{2} \|x - y\|^2, \qquad \forall \ x, y  \in \mathbb{R}^{n}.
\end{equation}
Hence, by (\ref{step}) we have
$$
f(x_{k+1}) \le f(x_k ) +  \nabla f(x_k)^Td_k + \frac{L}{2}\|  d_k\|^2.
$$
Taking the conditional expectation with respect to the $\sigma$-algebra $\mathcal{F}_k$, we obtain
$$
    \E{\big[f(x_{k+1})\, | \F_k\big]}\leq f(x_{k}) +  \nabla f(x_k)^T\E{\big[ d_k\, | \F_k\big]} + \frac{L}{2}   \,\E{\big[\|d_k\|^2\,|\F_k\big]}.
$$
Conditions (\ref{alphakalpha0}) imply $(1-\Delta_k)^2\le \Delta^*$ for some $\Delta^*>0$. Now, by  \eqref{dkEnew}--\eqref{var_bound2}, we have 
\begin{equation*}
\begin{aligned}
    \E{\big[f(x_{k+1})\, | \F_k\big]}
    &\leq f(x_{k}) - \Delta_k \|d(x_k)\|^2 +\kuno\Delta_k e(x_k) -  (1-\Delta_k) \nabla f(x_k)^T D \widetilde e(x_k)
    \nonumber \\ 
    & \;-\E{\big[\nabla f(x_k)^T D r(y_k)\, | \F_k\big]} + \frac{L}{2}  (\kdue \Delta_k^2+ \ktre {q^{2k}}+\kfour \mu_k+  2 \Delta_k^2\|d(x_k)\|^2) \nonumber \\ 
    &\leq f(x_{k}) - \Delta_k \|d(x_k)\|^2 +\kuno\Delta_k e(x_k) +  \sqrt{\Delta^*} \|\nabla f(x_k)\| \kappa_D e(x_k)
    \nonumber \\ 
    & \;+\|\nabla f(x_k)\| \kappa_D \E{\big[\|r(y_k)\|\, | \F_k\big]} + \frac{L}{2}  (\kdue \Delta_k^2+ \ktre {q^{2k}}+\kfour \mu_k+ 2\Delta_k^2\|d(x_k)\|^2).
    \label{inequality0}
\end{aligned}  
\end{equation*}

If $\{x_k\}$ is feasible, then $r(y_k)=0$. Otherwise, the upper bound ${\eta} e(x_k)+\mu_k$ on  $\|r(y_k)\|$ given  in \eqref{resb}  is $\F_k$-measurable,
and we obtain the following inequality that includes exact and inexact projections,
\begin{align}
    \E{\big[f(x_{k+1})\, | \F_k\big]}
    &\leq  f(x_{k}) - \Delta_k \|d(x_k)\|^2 +\big((1+\sqrt{\Delta^*})\kuno+ 
    \kg\kappa_D( \sqrt{\Delta^*}+ \eta) \big) e(x_k) \nonumber\\ 
    & \quad + \kg\kappa_D\mu_k + \frac{L}{2}  (\kdue \Delta_k^2+\ktre {q^{2k}}+\kfour\mu_k+  2\Delta_k^2 \|d(x_k)\|^2)\nonumber \\
    &\leq f(x_{k})  - \Delta_k (1-L\Delta_k) \|d(x_k)\|^2  +\kfive  q^{k}+\ksei \mu_k+\frac{L}{2}  \kdue \Delta_k^2,  
    \label{inequality}
\end{align}
with $\kfive=\Big(\kuno(1+\sqrt{\Delta^*})+     \kg\kappa_D( \sqrt{\Delta^*}+ \eta)\Big) \kappa_e+\frac{L}{2}\ktre$, $\ksei=\Big(\kg\kappa_D+\frac{L}{2}\kfour\Big)$ and 
the last inequality obtained using \eqref{infeas_rid}.

By construction $\Delta_k$ satisfies 
\begin{equation}\label{Deltak_bound}
    \alpha_{k}\delta_{\ell} \leq \Delta_k \leq \alpha_{k} \delta_u,
\end{equation}
hence
\begin{align}\label{positiveCoef}
    \E{\big[u_{k+1}\, | \F_k\big]}
    \leq u_k - \alpha_k\big( \delta_{\ell} - L \alpha_k\delta_u^2\ \big)\|d(x_k)\|^2  + \kfive q^{k} +\ksei \mu_k + \frac{L}{2}\kdue \alpha_k^2\delta_u^2   ,
\end{align}
where $u_k = f(x_k) - f^* $ from \eqref{boundedf}.
Since $\{\alpha_k\}$ is diminishing, let  $\bar{k}$ be the index such that  $\big( \delta_{\ell} -L \alpha_k\delta_u^2 \big) \geq  \frac{\delta_{\ell}}{2}$ for all $k\geq \bar{k}$. 
Hence, for $k\ge \bar k$
$$
\E\big[u_{k+1} \mid \mathcal{F}_k\big] \leq u_k - \alpha_k\frac{\delta_{\ell}}{2}\|d(x_k)\|^2 + \kfive  q^{k} +\ksei \mu_k  + \frac{L}{2}\kdue \alpha_k^2\delta_u^2   . 
$$
Since  $\sum_{k=0}^\infty \alpha_k^2$, $\sum_{k=0}^\infty  q^{k}$ and $\sum_{k=0}^\infty  \mu_k$ are summable, the Robbins–Siegmund supermartingale convergence Theorem \cite{robbins1971convergence} gives that 
$$\sum_{k=\bar{k}+1}^\infty \alpha_k \|d(x_k)\|^2 < \infty,$$
almost surely.
Now, assume by contradiction that $\|d(x_k)\| \geq c > 0$ for all $k > \bar{k}$. Thus
$$\sum_{k=\bar{k}+1}^\infty \alpha_k \|d(x_k)\|^2 \geq c^2 \sum_{k=\bar{k}+1}^\infty \alpha_k,
$$
which contradicts  \eqref{alphakalpha0}.  
Therefore, \eqref{limTheorem0} holds almost surely. Finally, from the Robbins–Siegmund supermartingale convergence theorem, we also conclude that $\lim_{k\rightarrow \infty} u_k$ exists and is finite almost surely.
\end{proof}
\medskip
The theorem above implies that almost surely there exists a subsequence $\{\|d(x_k)\|\}_{k\in K}$ of $\{\|d(x_k)\|\}$ convergent to zero; if $\{x_k\}_{k\in K}$ admits limit points, then such points are stationary. Now we analyze  the convergence of the \name\ using constant step-lengths and show that the expected sum of average-squared norms of $d(x_k)$ is bounded and decreases with $\alpha$.  
\medskip

\begin{theorem} \label{th.theorem3} 
Suppose that  Assumptions~\ref{A1}--\ref{A3} hold,  $\eta_k\le  \eta<1, \forall k$.
If  $\alpha_k = \alpha,\, \forall k\ge 0$, with $\alpha$   such that
\begin{equation}\label{alphak}
0<\alpha\le \frac{\delta_{\ell}}{2L \delta_u^2},
\end{equation}
then the iterates generated by \name
method satisfy 
\begin{equation}
    \lim_{K\rightarrow \infty} \E\left [\frac{1}{K} \sum_{k=0}^K\|d(x_k)\|^2\right]  \leq 
		\frac{\alpha L\kdue\delta_u^2 }{\delta_{\ell}},
\end{equation}
where $\kdue$ is a scalar in (\ref{var_bound2}). 
\end{theorem}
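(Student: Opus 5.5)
We reuse the one-step descent inequality (\ref{inequality}) from the proof of Theorem~\ref{th.theorem2}. That derivation used only Assumptions~\ref{A1}--\ref{A3}, Lemma~\ref{feasibility}, Lemma~\ref{lem.lemma4IPSG}, and the bound $(1-\Delta_k)^2\le\Delta^*$. The last bound holds here automatically, since $\Delta_k\in[\alpha\delta_\ell,\alpha\delta_u]$ is bounded. With $u_k=f(x_k)-f^*$ and (\ref{Deltak_bound}) for constant $\alpha_k=\alpha$, we obtain
\begin{equation*}
\E[u_{k+1}\mid\F_k]\le u_k-\alpha\bigl(\delta_\ell-L\alpha\delta_u^2\bigr)\|d(x_k)\|^2+\kfive q^k+\ksei\mu_k+\tfrac{L}{2}\kdue\alpha^2\delta_u^2 .
\end{equation*}
Condition (\ref{alphak}) gives $L\alpha\delta_u^2\le\delta_\ell/2$. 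Hence the coefficient of $\|d(x_k)\|^2$ is at least $\alpha\delta_\ell/2$, so we no longer need the index $\bar k$ that was required in the diminishing case.

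Next, we take total expectations, using the tower property, and sum for $k=0,\dots,K$. The left side telescopes, and $u_{K+1}\ge0$ by Assumption~\ref{A2}, which gives
\begin{equation*}
\frac{\alpha\delta_\ell}{2}\sum_{k=0}^K\E\|d(x_k)\|^2\le u_0+\kfive\sum_{k=0}^K q^k+\ksei\sum_{k=0}^K\mu_k+(K+1)\tfrac{L}{2}\kdue\alpha^2\delta_u^2 .
\end{equation*}
Because $q<1$ and $\{\mu_k\}$ converges R-linearly (Assumption~\ref{Ainf}), both geometric-type sums are bounded by constants $C_q=\kfive/(1-q)$ and $C_\mu$ independent of $K$. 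We then divide by $K\alpha\delta_\ell/2$ and use linearity of expectation. This gives
\begin{equation*}
\E\Bigl[\frac1K\sum_{k=0}^K\|d(x_k)\|^2\Bigr]\le\frac{2(u_0+C_q+\ksei C_\mu)}{K\alpha\delta_\ell}+\frac{K+1}{K}\,\frac{\alpha L\kdue\delta_u^2}{\delta_\ell}.
\end{equation*}
Letting $K\to\infty$, the first term vanishes and $(K+1)/K\to1$, which yields the claimed bound.

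The main obstacle is the phrasing ``$\lim$'' rather than $\limsup$: the argument above controls only the $\limsup$. I would either read the statement as a $\limsup$ bound, or note that the partial averages are bounded and interpret the limit accordingly. A second point to check carefully is that the constants $\kfive,\ksei,\kdue$ in (\ref{inequality}) remain valid with constant step. They do, because $\Delta^*$ can be taken as $\max\{1,(\alpha\delta_u)^2\}$, and $\kdue=4\nu$ is independent of the step-size. Measurability of $\Delta_k$ with respect to $\F_k$, needed for the conditional expectations, is inherited from the algorithm as in Lemma~\ref{lem.lemma4IPSG}.
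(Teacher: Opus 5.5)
Your proof is correct and follows essentially the same route as the paper: insert (\ref{Deltak_bound}) and (\ref{alphak}) into (\ref{inequality}), take total expectations, telescope over $k=0,\dots,K$, and use $f\ge f^*$ together with the summability of $\{q^k\}$ and $\{\mu_k\}$. You are slightly more careful than the paper, which silently drops the factor $(K+1)/K$ on the constant term and writes $\lim$ where the argument only controls the $\limsup$.
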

\begin{proof}
Condition  (\ref{alphak}) implies $(1-\Delta_k)^2\le \Delta^*,\, \forall  k$, and some positive $\Delta^*$.
Applying \eqref{Deltak_bound} and \eqref{alphak}  in \eqref{inequality} and taking total expectation, we obtain the following
\begin{align*}
    \E{\big[f(x_{k+1})\big]} \leq \E{\big[f(x_{k})\big]} - \frac 1 2 \alpha \delta_{\ell} 
    \E{\big[\|d(x_k)\|^2\big]} 
     +\kfive q^{k} +\ksei \mu_k
		+ \frac{L}{2}\kdue\alpha^2 \delta_u^2,
\end{align*}
and consequently 
\begin{align*}
    \E{\big[\|d(x_k)\|^2\big]} \leq \frac{2}{\alpha\delta_{\ell}}\left(\E{\big[f(x_k)\big]} - \E{\big[f(x_{k+1})\big]}\right) 
    + \frac{2\kfive}{\alpha \delta_{\ell}} q^{k}   + \frac{2\ksei}{\alpha \delta_{\ell}} \mu_k+ 
		\frac{\alpha L\kdue\delta_u^2}{\delta_{\ell}} .
\end{align*}
Summing both sides of this inequality for $k=0, \ldots,K$, and dividing  by $K$, we have 
\begin{align*}
 \E\left [\frac{1}{K} \sum_{k=0}^K\|d(x_k)\|^2\right] &\leq \frac{2}{\alpha\delta_{\ell} K}\left(f(x_0) - \E{\big[f(x_{K+1})\big]}\right) 
 + \frac{2\kfive}{\alpha \delta_{\ell}K}\sum_{k=0}^K q^{k}\nonumber \\
 &  \quad  + \frac{2\ksei}{\alpha \delta_{\ell}K}\sum_{k=0}^K \mu_k+\frac{\alpha L\kdue\delta_u^2}{\delta_{\ell}} \\
 &\leq \frac{2}{\alpha\delta_{\ell} K}(f(x_0) - f^* )
 + \frac{2\kfive}{\alpha \delta_{\ell}K}\sum_{k=0}^K q^{k}
 \nonumber \\
 &  \quad  + \frac{2\ksei}{\alpha \delta_{\ell}K}\sum_{k=0}^K \mu_k+ \frac{\alpha L\kdue\delta_u^2}{\delta_{\ell}} .
\end{align*}
Since  $\sum_{k=0}^\infty  q^{k}$ and $\sum_{k=0}^\infty \mu_{k}$ are summable, the claim follows.
\end{proof}

\section{Related Work}\label{sec:offosqp}
The extension of methods with random models for the unconstrained setting to the  setting of deterministic  equality and inequality constrained problems is a recent area of research which is drawing much interest, see \cite{BellaGratMoriToin25, curtis_siopt, CurtRobiZhou24, curtis_2026, krejic2025ipas,krejic2025box, krejic2025nolin, mahoney_siopt}.
Focusing on papers \cite{curtis_siopt, curtis_2026, krejic2025ipas, mahoney_siopt} for deterministic equality constrained optimization, we sketch their main features.

The work \cite{krejic2025ipas} introduces 
an Inexact Projection with Additional Sampling gradient method (\ipas) for weighted-sum minimization with linear equality constraints; we refer to the end of \S \ref{Sec.method} for the description of the control imposed on  $\|r(y_k)\|$.

Despite inexact projections in \ipas\ and in \names being controlled differently, the sequence $\{e(x_k)\}$ of infeasibility measures behaves similarly (cf. our Lemma  \ref{feasibility} and \cite[Lemma 4.2]{krejic2025ipas}).

\ipas\ employs stochastic estimates of the objective function. Specifically, functions and gradients are approximated by sampling and the step-size is determined by a non-monotone line-search rule over an approximate objective function. Additional sampling is used to decide whether a trial point should be accepted, as well as to decide if the sample size needs to be increased. Although this provides an  adaptive batch size strategy, the additional sampling combined with the line search yields additional costs and a more elaborated algorithm with respect to the  scheme proposed in this paper. In principle, \names also allows a variable sample size scheme, but it lacks a precise rule for its guidance.

Papers \cite{curtis_siopt, curtis_2026, mahoney_siopt}
present objective function-free procedures in the class of  Sequential Quadratic Programming (SQP) algorithms.
The procedures in \cite{curtis_siopt,   mahoney_siopt} employ stochastic gradients of the objective function  and prescribed approximations of the Hessian of the objective function and/or a Lagrangian function; the step-size selection is adaptive and is based on estimated Lipschitz constants. In particular, in \cite{curtis_siopt} the search direction  results from the use of a merit function with $\ell_1$-norm penalty function; it is computed solving a quadratic optimization problem based on a local quadratic minimizer of the objective function and a local affine model of the constraint.
The choice of the step-size is inspired by  a line search strategy and is based on a rule using Lipschitz constant estimates.  
The hyper-parameters of the algorithm are: a sequence of Lipschitz constant estimates of the objective function, a sequence of Lipschitz constant estimates of the constraint function, and a sequence to control the step-size. Assuming unbiased stochastic gradients and condition (\ref{var_bound1}), the analysis shows that the generated sequence achieves stationarity and feasibility in expectation. For the linearly constrained case considered here, the algorithms in \cite{curtis_siopt, curtis_2026, mahoney_siopt} generate feasible iterates.

In \cite{mahoney_siopt} the search direction is decomposed into a normal step and a tangential step. 
Exact projections are supposed to be computable and consequently the normal step has a closed form. On the other hand, the tangential step solves a trust-region problem which employs a basis for the null space of the Jacobian of the constraints. The adaptive trust-region radius is computed using estimates of the Lipschitz constants of both the objective function and the constraint functions.
The hyper-parameters of the algorithm are: a sequence of Lipschitz constant estimates of the objective function, a sequence of Lipschitz constant estimates of the constraint function, and two sequences of positive scalars to control the trust-region radius. Assuming that the stochastic gradient is unbiased and condition (\ref{var_bound1}), the theoretical analysis shows that KKT residuals converge to zero almost surely.

Finally, the recent work \cite{curtis_2026} extends stochastic momentum methods for unconstrained optimization to the stochastic SQP setting and proposes two algorithms: a projected stochastic heavy-ball SQP method and a projected stochastic Adam SQP method. These methods require exact projections, use projected stochastic gradient estimates in the momentum terms, and involve two predefined step-size-related sequences. Assuming unbiased stochastic projected gradients, condition (\ref{var_bound1}), and boundedness of $\{\|\nabla f(x_k)\|\}$, their convergence behavior is shown to be analogous to that of the corresponding unconstrained methods. In contrast, \names\ allows inexact projections.

\section{Experimental Configuration}\label{Sec.NumericalResults} 
All runs were performed in MATLAB R2025a on a workstation equipped with an Intel Ultra 9 processor, 128 GB DDR5 RAM, dual 2 TB SSDs, and an NVIDIA RTX PRO 4000 Blackwell GPU (24 GB), without using GPU acceleration.
First, we describe the test problems and our numerical implementation. Second, we show the performance of \names Algorithm. Finally, we compare our procedure with two algorithms recently proposed.
\subsection{Test problems}
We applied \names in the solution of six problems from the literature. The complete description is given below. 

\paragraph{Equality-constrained logistic regression problems}
{We used \textsc{Mushrooms}, \textsc{Mnist}, and \textsc{Diabetes} datasets from  \textsc{libsvm}\footnote{We used the files \texttt{Mnist.mat} and \texttt{diabetes.txt} from the \textsc{Libsvm} Library. }  \cite{chang2011libsvm}. 
For \textsc{Mushrooms}, we mapped labels ${1,2}$ to ${+1,-1}$ respectively, and used the feature matrix as returned.
For the multi-class dataset \textsc{Mnist}, we restricted the dataset   to a binary one with digits ${0,8}$ and mapped $0$ to $+1$ and $8$  to $-1$; all features were scaled to $[0,1]$.
For \textsc{Diabetes}, we remapped labels ${0,1}$ to ${-1,+1}$ respectively, and scaled the features to $[-1,1]$.
Given  $\{(z_i,y_i)\}_{i=1}^N$ with $z_i\in\mathbb{R}^n$ and $y_i\in\{-1,1\}$, we define the objective function in \eqref{problem}   \footnote{In our runs we used the stable formulation of the logistic regression.}
\begin{equation}
\label{eq:logistic}
f(x) \;=\; \frac{1}{N}\sum_{i=1}^{N} \log\!\big(1+\mathrm{e}^{-y_i z_i^\top x}\big).
\end{equation}
The constraint matrix $A \in \mathbb{R}^{m \times n}$ and the vector $b \in \mathbb{R}^{m}$ were generated using a fixed random seed (\texttt{rng(0)}); as for $n$, $m$ and  $N$, see Table~\ref{tab:probSize}. We used the initial iterate $x_0= A^{T} (A A^{T})^{-1} b$ for the runs with exact projection and $x_0=0$ for runs with inexact projection.}

\paragraph{Equality-constrained \textsc{Cutest} problems}
{We considered the problems {\textsc{Huestis}, \textsc{Dtoc1l}, and \textsc{Hs50}} subject to linear constraints. Following \cite{krejic2025ipas}, let $\widetilde f(x):\mathbb{R}^n\to\mathbb{R}$ denote the objective function in the \textsc{cute}st collection. We define a finite-sum objective function for (\ref{problem}) as
\begin{equation}\label{e.modf}
    f(x) = \widetilde f(x) + \sum_{i=1}^{N} \xi_i^2 \, \|x\|_2^2, 
\end{equation}
where each  $\xi_i$  is an independent random sample drawn from a Gaussian distribution, i.e., $\xi_i \sim \mathcal{N}(0, \sigma^2)$, with $\sigma=0.1$ and  fixed random seed (\texttt{rng(0)}). The constraint matrix $A \in \mathbb{R}^{m \times n}$  and the initial feasible point $x_0$ are given in the collection. The values of $n$, $m$ and  $N$  are given in  Table~\ref{tab:probSize}.}

\begin{table}[t]
\centering
\footnotesize
\caption{\small{Test problems.}}
\label{tab:probSize}
\renewcommand{\arraystretch}{1.3}
\begin{tabular*}{\textwidth}{@{\extracolsep{\fill}} l c c c c c c}
\hline

& \textsc{Mnist}
& \textsc{Mushrooms} 
& \textsc{Diabetes} 
& \textsc{Dtoc1l} 
& \textsc{Huestis} 
& \textsc{Hs50} \\ 
\hline

$N$ 
& $11774$ 
& $8124$ 
& $768$ 
& $10000$ 
& $10000$ 
& $10000$ \\[0.5mm]

$n$ 
& $780$ 
& $112$ 
& $8$ 
& $58$ 
& $10$ 
& $5$ \\[0.5mm]

$m$ 
& $\lfloor 0.5 n \rfloor$ 
& $\lfloor 0.5 n \rfloor$ 
& $\lfloor 0.5 n \rfloor$ 
& $36$ 
& $2$ 
& $3$ \\[0.5mm]

\hline
\end{tabular*}
\end{table}

\subsection{Algorithmic Implementation}\label{SubSec_conf}
For the computation of the stochastic gradients, the partition $\{\mathcal{N}_k^i\}_{i=1}^r$ in (\ref{partition}) was  kept fixed along the iterations. 
Unless explicitly stated, the mini-batch size   was set to 64 for \textsc{Diabetes} and to 256 for the other datasets. We  built ten independent random partitions, each of which created by using an independent random seed\footnote{If $N$ is not a multiple of the number of partitions in \eqref{gki}, one mini-batch in \eqref{partition} has smaller cardinality.}. For each of such  partitions, the mini-batches were selected randomly and independently along the iteration.

Regarding the selection of the step-sizes in \name, we set $\delta_{\ell}=10^{-3}$, $\delta_{u}=10^{2}$. We tested three strategies denoted \textbf{S1}, \textbf{S2} and \textbf{S3}.
They differ in the choice of the scaling parameter $\alpha_{k}$ and/or of the parameter $\delta_{k}$. 
In  \textbf{S1}, we considered a fixed sequence $\{\alpha_{k}\}$,  $\alpha_{k}=\alpha$ for all $k\ge0$,
while the scalar $\delta_k$ was computed with a Barzilai-Borwein approach due to the potential of such step-sizes in a stochastic environment, see e.g., \cite{BMY_JCAM, Dai2005}. Our requirement that $\Delta_{k}$ is fully determined at $x_{k }$ motivated the use of  the retarded Barzilai-Borwein steps \cite{Dai2005, Friedlander1999}, defined as
$$\delta_k = \displaystyle \left| \frac{d_{k-\widetilde q}^T d_{k-\widetilde q}}{d_{k-\widetilde q}^T z_{k-\widetilde q}} \right|,$$
where $\widetilde q = \min\{q, k-1\}$ with $q \ge 1$, and $d_{t-1} = x_t - x_{t-1}$, $z_{t-1} = g_t - g_{t-1}$ with $t \ge 1$; see \cite[Eq.~(2.3)]{Dai2005}. Setting $q = 1$ yields  
\begin{equation}\label{BBdelay_s}
    \delta_k = \left| \frac{d_{k-1}^T d_{k-1}}{d_{k-1}^T z_{k-1}} \right|. 
\end{equation} 
Following guidelines on stochastic Barzilai-Borwein step-lengths, it is advisable to compute $z_t$ using  stochastic gradients with the same mini-batch; since this would require an additional gradient evaluation at each iteration, we updated $\delta_k$ using (\ref{BBdelay_s}) every $20$ iterations. 
The strategy \textbf{S1} is adaptive due to the form of  $\delta_k$,  and  the sequence $\{\Delta_k\}$ is not supposed to be decreasing.

In \textbf{S2}, we considered $\delta_k$ as in (\ref{BBdelay_s}), and  the diminishing  parameter $\alpha_{k+1}$ of the form
\begin{eqnarray}
\alpha_{k+1} & = &\frac{a}{a+k} c_{k}(\gamma_0, \gamma_1),  \label{e.dimAlpha}\\
c_{k}(\gamma_0, \gamma_1) &= & \gamma_1 + 0.5(\gamma_0 - \gamma_1)\Big(1 + \cos\Big(\frac{k\pi}{k_{\max}}\Big)\Big),  
\label{e.cosdecay}
\end{eqnarray}
with $\gamma_0, \gamma_1 > 0$ and $k_{\max}$ representing the maximum number of iterations allowed,  \cite{loshchilov2016sgdr, matlab_cosine}. The cosine-decay step rule (\ref{e.cosdecay}) implies
$c_0(\gamma_0,\gamma_1)=\gamma_0$, $c_{k_{\max}}(\gamma_0,\gamma_1)=\gamma_1$.
In fact, the strategy \textbf{S2} is still adaptive because of  $\delta_k$, but now $\Delta_k$ is driven in intervals of diminishing length, i.e., from  $[\gamma_0 \delta_{\ell}, \gamma_0\delta_u]$  to   $[ \frac{a\gamma_1}{a+k_{\max}}\delta_{\ell}, \,\frac{a\gamma_1}{a+k_{\max}} \delta_u]$ as $k$ reaches $k_{\max}$.

In the strategy  \textbf{S3} we fixed $\delta_{k} = 1$ for all $k\ge0$, and  set $\alpha_{k+1}$ as in (\ref{e.dimAlpha})-(\ref{e.cosdecay}). Taking into account that $\pi_{\delta}(1)=1$ for reasonable values of $\delta_{\ell}$, $\delta_u$, we have $\Delta_{k}=\alpha_k$ and decreasing step-sizes for increasing values of $k$.

Table~\ref{tab:SSS} summarizes the three strategies  \textbf{S1}-\textbf{S3}. In our experiments we set $a = 1000$ in  (\ref{e.dimAlpha}) and $\gamma_1 = 10^{-5}$ in (\ref{e.cosdecay}). The parameter $\alpha$ in \textbf{S1}  and  the parameter $\gamma_0$ in \textbf{S2} and \textbf{S3} were varied;
we tuned $\alpha$ and $\gamma_0$   exploring a set of prefixed values as shown in \S \ref{Sec.Eff}. The initial step-size is set to $\Delta_0=\alpha \delta_{\ell}$ in \textbf{S1} and $\Delta_0=\gamma_0 \delta_{\ell}$ in \textbf{S2} and \textbf{S3}.

The exact projection $\pi_S$ in \eqref{exactprojection} was computed by the Cholesky factorization of $AA^T$. Exploiting this factorization, we precomputed the  matrix  $D = A^T(AA^T)^{-1}$ once prior to the iterative process. Afterwards, the projection computation required products of the matrix $D$ times a vector.

 The evaluation of the inexact projection $\widetilde \pi_S$ in (\ref{inexactprojection}) was performed  applying the
Conjugate Gradient Method (Matlab built-in function \texttt{pcg}) to \eqref{res} with stopping criterion  \eqref{resb}. We set $\eta_k = \eta>0$ for all $k\ge 0$, and $\mu_k = \mu_0 \rho^k$, for all $k\ge 0$, $\mu_0>0$ and $\rho\in (0,1)$; the ablation study on  $\eta$,  $\mu_0$ and $\rho$ is given in \S \ref{Sec.Sensi}. 
The stopping criteria of \texttt{pcg} are the relative residual condition $\|r(y_k)\|/ \|Ay_k - b\| < \tau$ and the maximum iteration limit (\textsc{iter}). We set $\textsc{iter}= m$ and let  
$$ \tau = \max\, \left \{10^{-10}, \min\, \left \{\frac{\eta_k \|Ax_k-b\| +\mu_k}{\|Ay_k - b\|}, 10^{-3}\right \}\, \right \}.$$
The thresholds above avoid too small and too large values of  $\|r(y_k)\|$. Moreover,  to save computations, we skipped projection when  $\|Ay_k - b\| \leq 10^{-12}$.

If not explicitly stated, each run consists of $k_{\max}=10^4$ iterations.  For each configuration of the parameters involved, the evaluation metrics were averaged over ten independent runs to construct the mean performance values. In the generated plots, the $y$-axis displays these metrics on a base-10 logarithmic scale.

\begin{table}[t]
\centering
\caption{\small{Strategies for computing the step-length $\Delta_{k+1}$.}}
\label{tab:SSS}
\renewcommand{\arraystretch}{1.3}
\footnotesize
\begin{tabular*}{\textwidth}{@{\extracolsep{\fill}} l c c}
\hline
 Strategy & \textbf{$\alpha_{k+1}$} & \textbf{$\delta_k$} \\ 
\hline
$\mathbf{S1}$ 
& $\alpha > 0$ 
& $\delta_k$ in \eqref{BBdelay_s} \\[.5mm]

$\mathbf{S2}$   
& $\alpha_{k+1}$ \mbox{ in } (\ref{e.dimAlpha})-(\ref{e.cosdecay})  
& $\delta_k$ in \eqref{BBdelay_s} \\[0.5mm]

$\mathbf{S3}$  
&  $\alpha_{k+1}$ \mbox{ in } (\ref{e.dimAlpha})--(\ref{e.cosdecay})  
& $\delta_k = 1$  \\[2mm]
\hline
\end{tabular*}
\end{table}


\begin{figure}[H]
\centering
\begin{subfigure}[t]{\textwidth}
\centering
\includegraphics[scale=0.2]{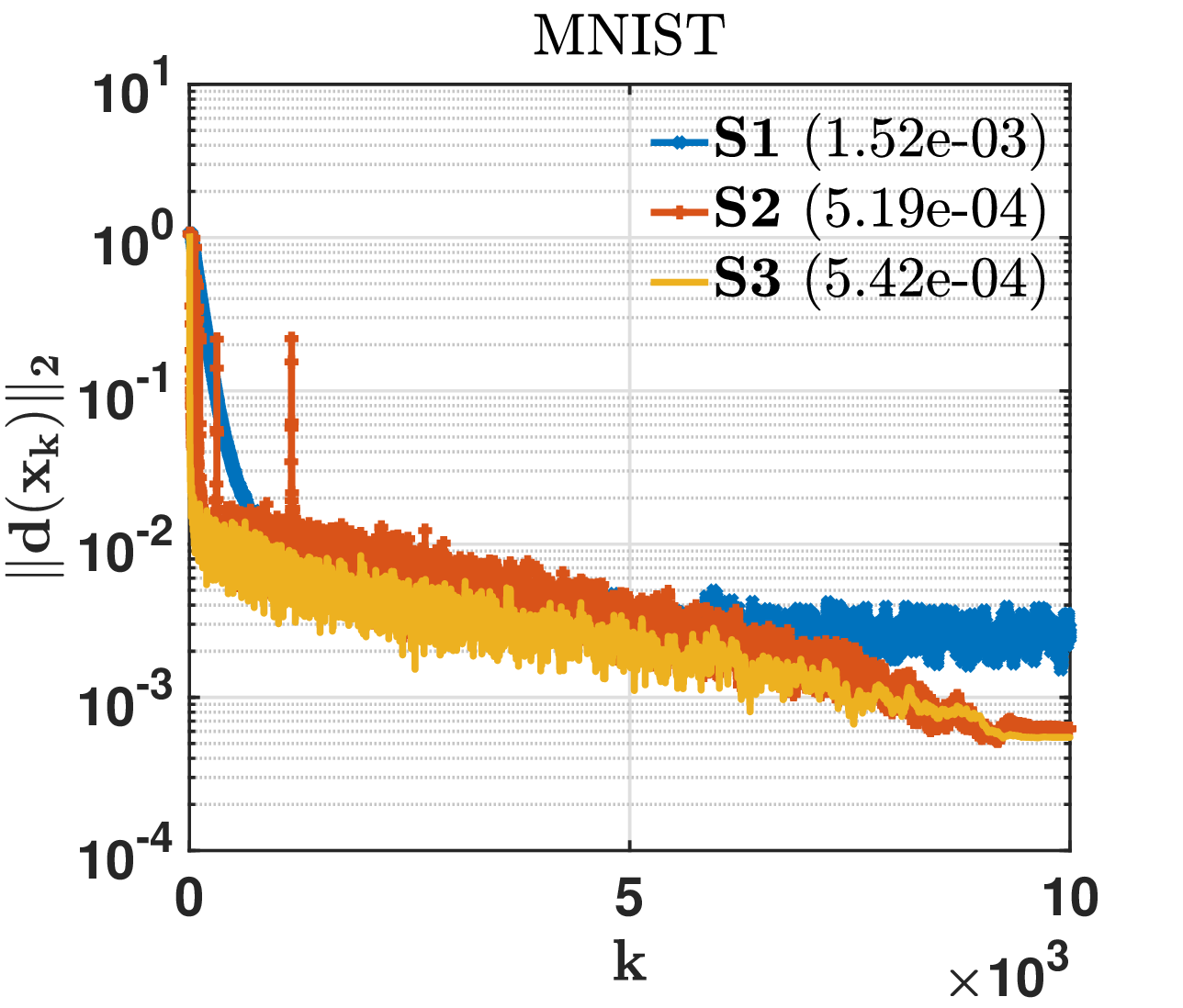}\hspace{-3mm} 
\includegraphics[scale=0.2]{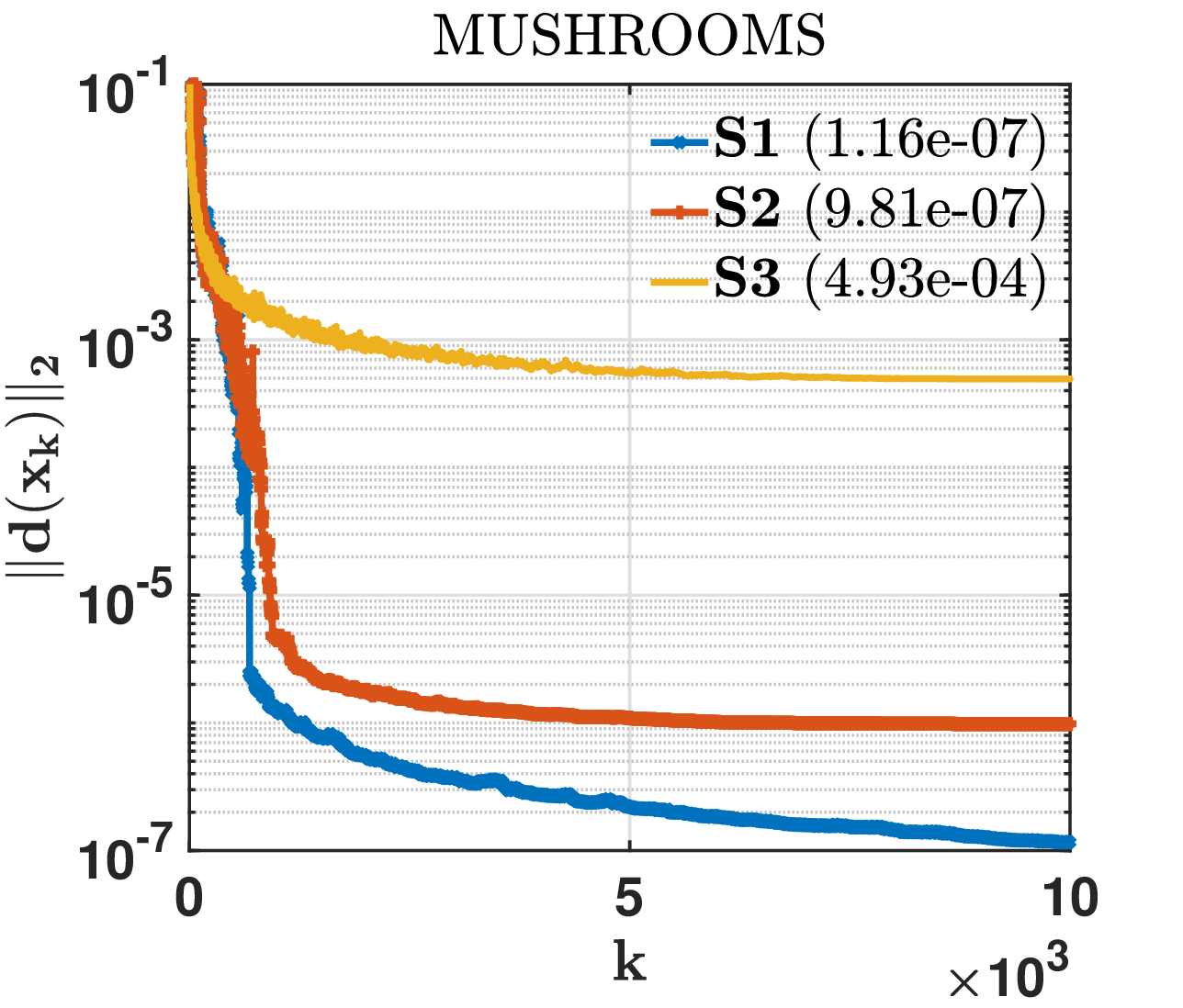}\hspace{-3mm} 
\includegraphics[scale=0.2]{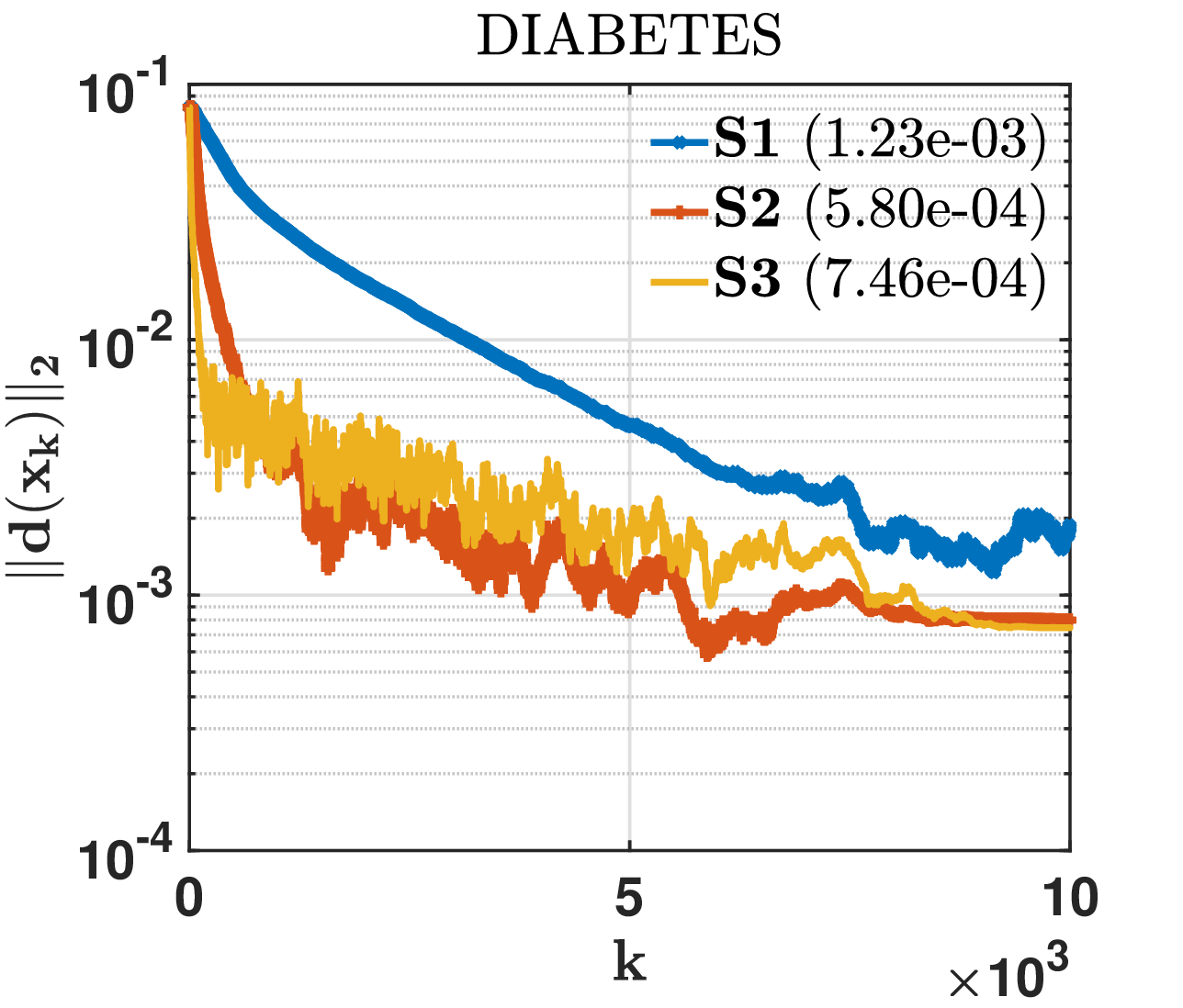}
\end{subfigure}  
\medskip
\begin{subfigure}[t]{\textwidth}
\centering
\includegraphics[scale=0.2]{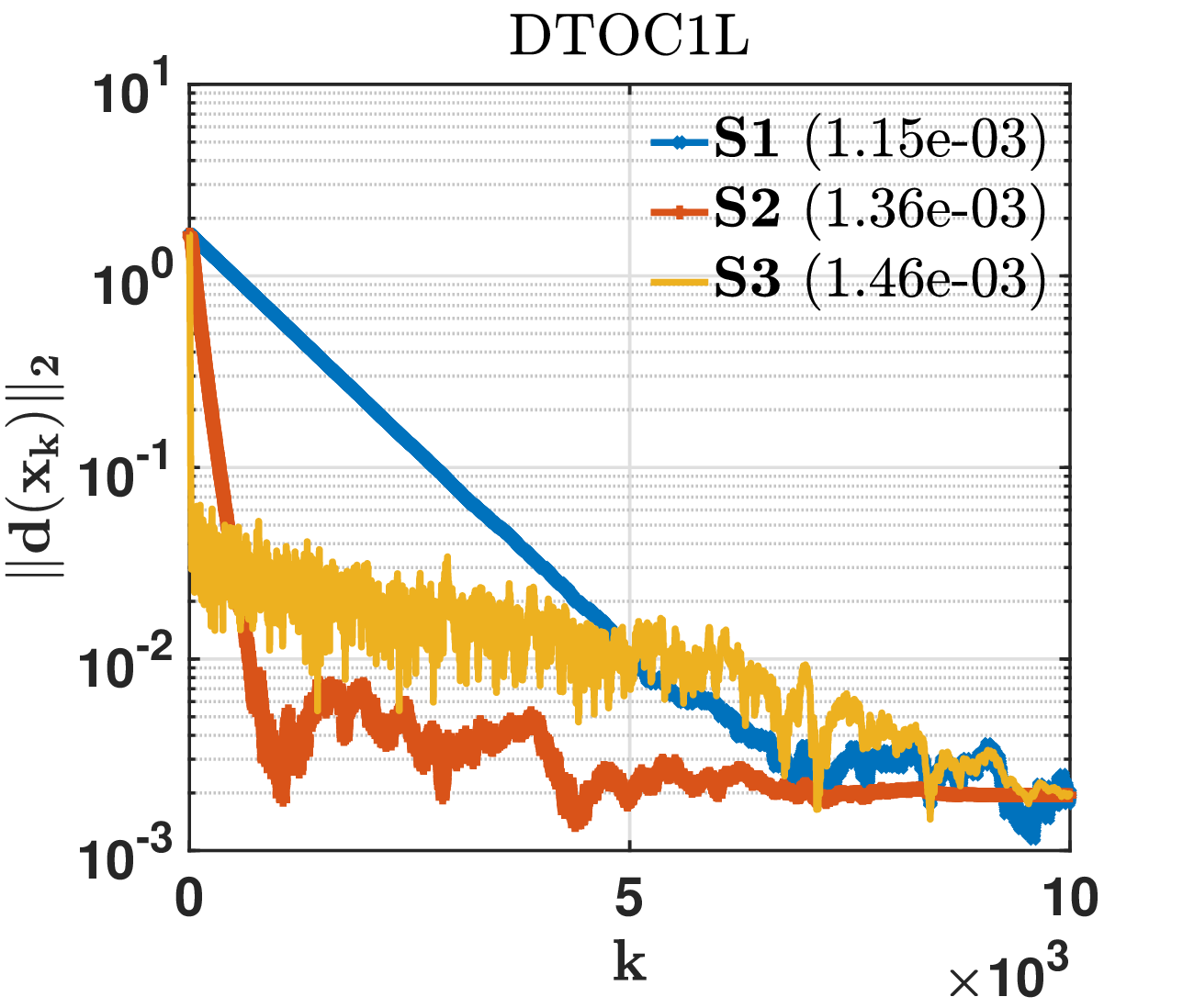}\hspace{-3mm} 
\includegraphics[scale=0.2]{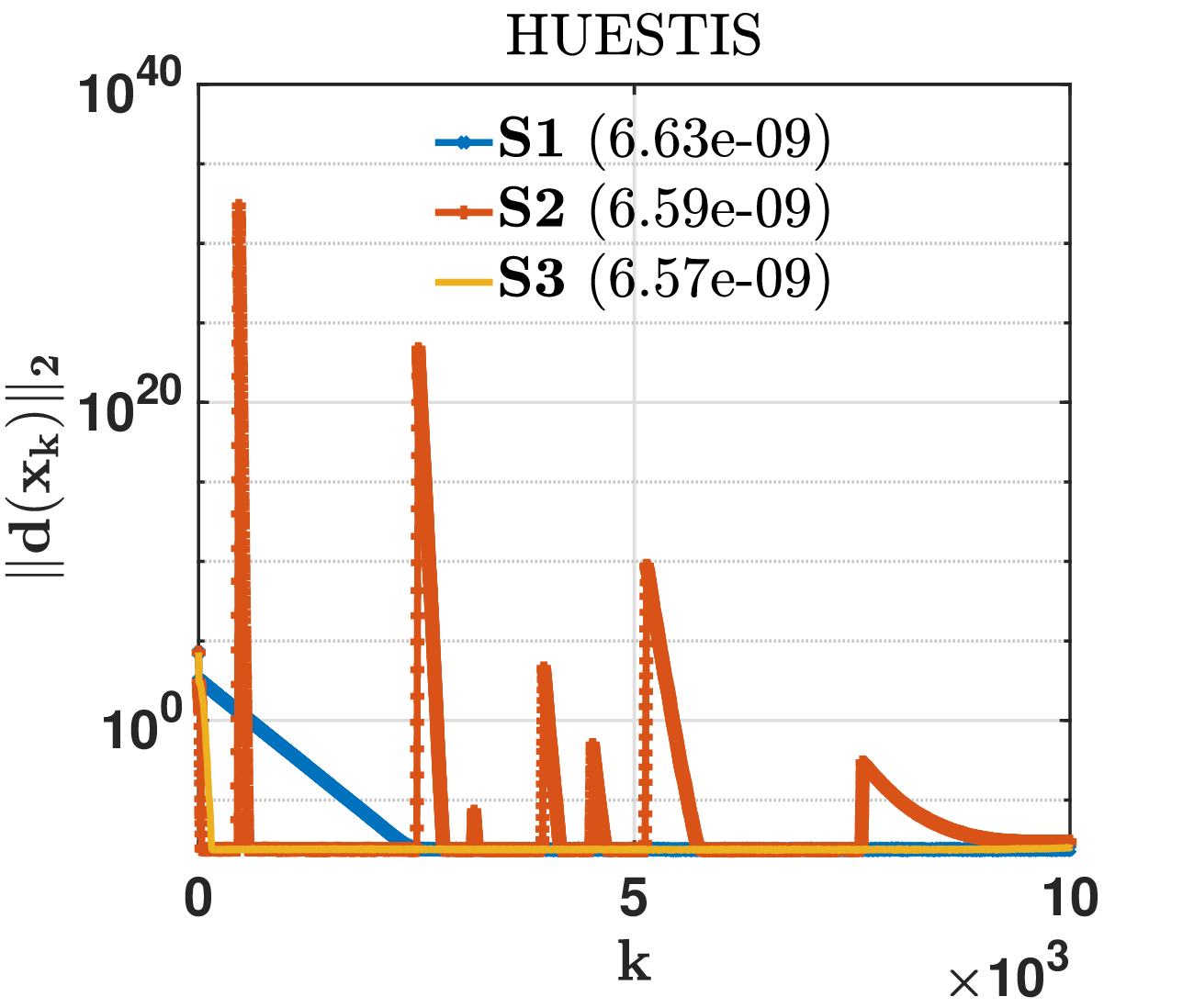}\hspace{-3mm}
\includegraphics[scale=0.2]{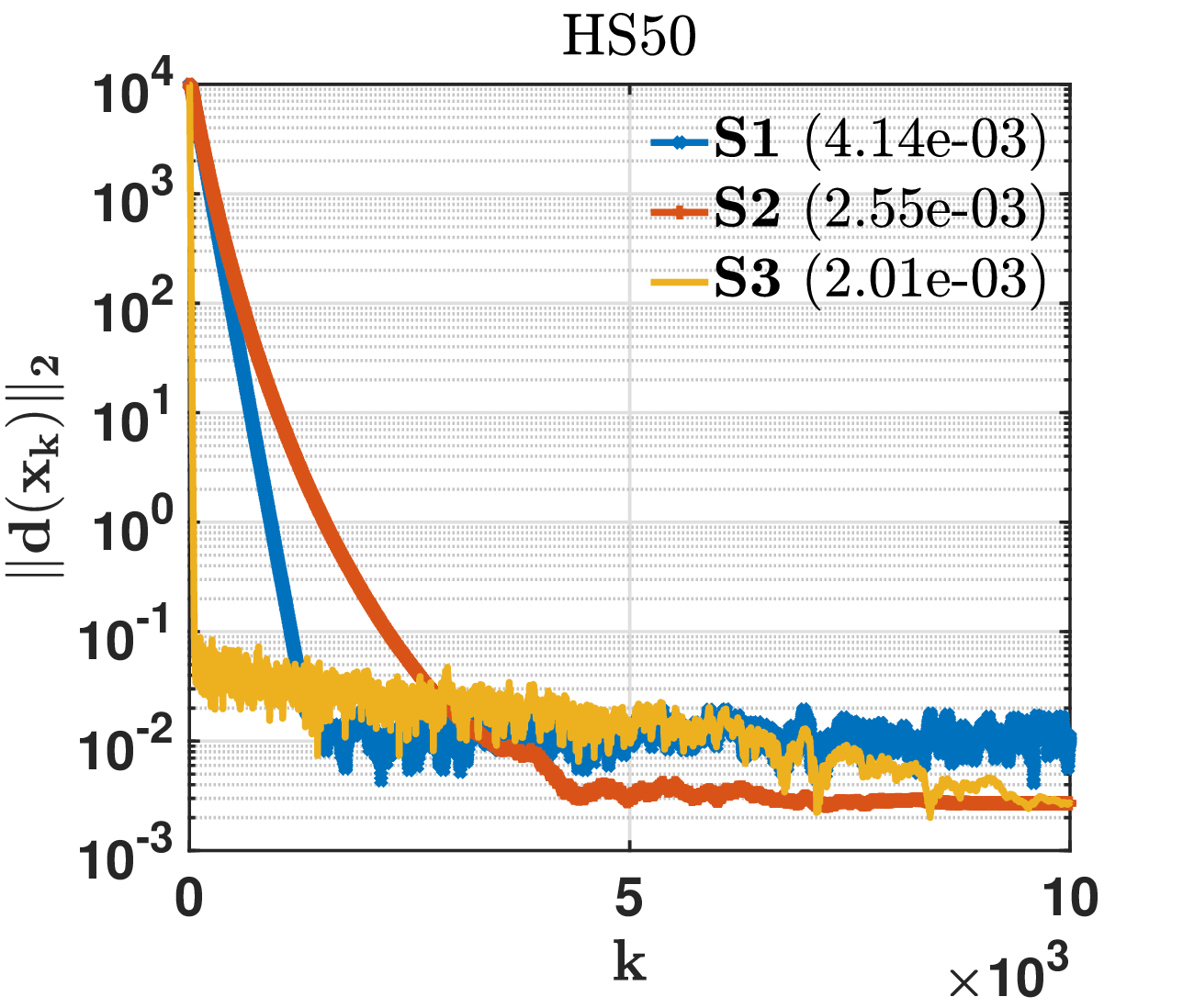}
\end{subfigure}
\caption{\small   The average values of $\|d(x_k)\|$ obtained by \name \ using the exact projection  and step -size strategies \textbf{S1}-\textbf{S3} with tuned $\alpha$ and $\gamma_0$.}
  \label{fig:exact}
\end{figure}
\section{Numerical Results}\label{Sec.Num}
In \S \ref{Sec.Eff} we analyze the performance of our algorithm implemented with the step-size strategies \textbf{S1}-\textbf{S3} and run for varying hyper-parameters, including the batch-size. In \S \ref{Sec:inexactp} we analyze the performance of \names when the projection is calculated inexactly. In \S~\ref{Sec.PerfCompSQP} and \S~\ref{Sec.PerfCompIPAS}, we compare the performance of \names with the Projected Stochastic Adam SQP method (\sqp) proposed in \cite{curtis_2026} and with the \ipas\ algorithm proposed in \cite{krejic2025ipas}, respectively.

\subsection{Performance of PSG\_LECO and Ablation Study}\label{Sec.Eff}
\subsubsection{Step-Size Selection}

\names Algorithm is an adaptation of the Stochastic Gradient method to the linearly constrained case and inherits its dependence from the step-size selection from both a theoretical and numerical point of view. Hence, our numerical analysis of \names method was first conducted using the exact projection $\pi_S$ in (\ref{exactprojection}) and focusing on the strategy for choosing the step-size and the parameters therein: the parameter $\alpha$ in strategy \textbf{S1}, and the parameter $\gamma_0$ in strategies  \textbf{S2} and \textbf{S3}. In principle, in the strategies  \textbf{S1} and \textbf{S2} the frequency of the update (\ref{BBdelay_s})  of $\delta_k$    may play a role, but, as briefly shown at the end of this section, we experimentally verified that it marginally affects the performance; for this reason, the discussion below refers to the update (\ref{BBdelay_s})  of $\delta_k$  every 20 iterations, with the batch-size as specified in \S\ref{SubSec_conf}.

We conducted our experiments by varying the hyper-parameters $\alpha$ and $\gamma_0$ within the range $\mathcal{T}_1=\{10^{-3}, 10^{-2}, 10^{-1}, 1, 10\}$. The remaining hyper-parameters in  \name \ were specified in \S\ref{SubSec_conf}. 
In Table~\ref{tab:OptParSens}, we display the smallest  average value of $\|d(x_k)\|$ obtained for varying strategies and parameters. The performance of \name \ depends on the step-size rule and below we discuss the results obtained.

For values of $\alpha$ in the set $\{10^{-2}, 10^{-1}, 1\}$, the strategy \textbf{S1} achieves low values for optimality measure and the value of $\alpha$ influences the performance  only in the solution of the problem \textsc{Mushrooms}.  In  general, using the smallest value $\alpha=10^{-3}$ is not  effective.
Referring to a failure in the case when the smallest achieved value of $\|d(x_k)\|$  is higher than $10^{-1}$,  we observe failures for the value  $\alpha=10$ in problems  \textsc{Dtoc1l}, \textsc{Huestis} and \textsc{HS50}. The strategy \textbf{S2} shows failures in the solution of the problems \textsc{Dtoc1l},  \textsc{Huestis} and \textsc{HS50}  when $\gamma_0=10^{-3}, 10$. Excluding failures,  out of the complete runs, the accuracy achieved by  \names with  \textbf{S2} is higher or comparable to that of \names with \textbf{S1} except for problem \textsc{Mushrooms}. Further, employing \textbf{S2} outperforms  \textbf{S3} in terms of optimality. The strategy \textbf{S3} appears to be the less robust among the three tested. Failures occur for values   $\gamma_0$ in the set $\{10^{-2}, 10^{-1}, 1, 10\}$ in the solution of \textsc{Dtoc1l}, \textsc{Huestis} and \textsc{HS50}.

The results obtained suggest that \names Algorithm coupled with the strategy \textbf{S2} attains the highest accuracy, and that the adaptive choice for $\delta_k$ positively affects the performance of the method. In order to provide more insight into \names Algorithm, in Figure \ref{fig:exact}
we plot the average values of $\|d(x_k)\|$ versus the iterations obtained with the  strategies \textbf{S1}-\textbf{S3}; the results displayed  correspond to the tuned parameters which gave 
comparatively better performance in terms of optimality in the experimental study presented above and are reported in Table \ref{tab:optimalParam}.

\begin{table}[t]
\centering
\caption{\small The minimum average value of $\|d(x_k)\|$ obtained by \name\ over all the datasets, using the exact projection.}
\label{tab:OptParSens}
\setlength{\tabcolsep}{2pt} 
\begin{tabularx}{\textwidth}{|c|*{6}{>{\centering\arraybackslash}X|}}
\hline
& \multicolumn{3}{c|}{\textbf{MNIST}} 
& \multicolumn{3}{c|}{\textbf{MUSHROOMS}} \\ \hline

$\alpha, \gamma_0$ & \textbf{S1} & \textbf{S2} & \textbf{S3} & \textbf{S1} & \textbf{S2} & \textbf{S3} \\ \hline

$10^{-3}$
& 7.326426e-03 & 2.083257e-01 & 2.788223e-01 
& 1.013773e-02 & 9.433663e-02 & 2.964358e-01 \\ \hline

$10^{-2}$  
& 1.518623e-03 & 3.853655e-03 & 4.195787e-02 
& 1.099648e-03 & 6.204044e-03 & 6.876800e-02 \\ \hline

$10^{-1}$
& 2.034988e-03 & 7.589910e-04 & 6.799109e-03 
& 1.036599e-04 & 6.369529e-04 & 1.472482e-02 \\ \hline

1      
& 3.859859e-03 & 5.192025e-04 & 1.444848e-03 
& 7.020857e-06 & 3.794461e-05 & 2.981289e-03 \\ \hline

10     
& 4.092225e-03 & 5.560996e-04 & 5.421119e-04 
& 1.158238e-07 & 9.810904e-07 & 4.932504e-04 \\ \hline

\end{tabularx}

\begin{tabularx}{\textwidth}{|c|*{6}{>{\centering\arraybackslash}X|}}
\hline
& \multicolumn{3}{c|}{\textbf{DIABETES}} 
& \multicolumn{3}{c|}{\textbf{DTOC1L}} \\ \hline

$\alpha,\gamma_0$ 
& \textbf{S1} & \textbf{S2} & \textbf{S3}
& \textbf{S1} & \textbf{S2} & \textbf{S3} \\ \hline

$10^{-3}$ 
& 1.228244e-03 & 1.921677e-02 & 7.151869e-02 
& 1.154305e-03 & 2.907515e-01 & 1.458438e-03 \\ \hline

$10^{-2}$ 
& 1.638480e-03 & 5.799938e-04 & 3.518885e-02 
& 2.698684e-03 & 1.361814e-03 & 1.806219e-03 \\ \hline

$10^{-1}$ 
& 4.407748e-03 & 8.521068e-04 & 2.688412e-03 
& 5.849365e-03 & 1.747942e-03 & 1.608953e+00 \\ \hline

$1$       
& 2.432084e-02 & 1.463002e-03 & 7.456083e-04 
& 3.145822e-02 & 1.817416e-03 & 1.315883e+00 \\ \hline

$10$      
& 7.998332e-02 & 2.392200e-03 & 9.455264e-04 
& 1.273853e+00 & 1.273853e+00 & 1.614671e+00 \\ \hline

\end{tabularx} 

\begin{tabularx}{\textwidth}{|c|*{6}{>{\centering\arraybackslash}X|}}
\hline
& \multicolumn{3}{c|}{\textbf{HUESTIS}} 
& \multicolumn{3}{c|}{\textbf{HS50}} \\ \hline

$\alpha,\gamma_0$ 
& \textbf{S1} & \textbf{S2} & \textbf{S3}
& \textbf{S1} & \textbf{S2} & \textbf{S3} \\ \hline

$10^{-3}$ 
& 1.389308e-02 & 5.864945e+01 & 6.663500e-09 
& 4.170339e-01 & 1.755844e+03 & 2.008578e-03 \\ \hline

$10^{-2}$ 
& 6.625383e-09 & 1.024091e-05 & 6.571084e-09 
& 4.142960e-03 & 2.547671e-03 & 9.893982e+03 \\ \hline

$10^{-1}$ 
& 6.572476e-09 & 6.383179e-09 & 3.253096e+02 
& 8.063442e-03 & 2.399365e-03 & 9.711535e+03 \\ \hline

$1$       
& 6.833632e-09 & 6.590809e-09 & 2.686514e+02 
& 4.337829e-02 & 2.503668e-03 & 7.920287e+03 \\ \hline

$10$      
& 2.064706e+02 & 2.064706e+02 & 2.979305e+02 
& 9.914297e+03 & 1.078018e+04 & 1.672371e+05 \\ \hline

\end{tabularx}
\end{table}

Figure~\ref{fig:exact}  displays the  trend of $\|d(x_k)\|$ through the iterative procedure for the three strategies with selected parameter values in Table~\ref{tab:optimalParam}; the attained minimum value of $\|d(x_k)\|$ is reported in parentheses in the legend.  
We observe that Algorithm~\name\ coupled with the \textbf{S3} strategy generally exhibits good progress toward optimality during the initial phase of the execution and shows a behavior comparable to \textbf{S2} in the final stage, except on the \textsc{Mushrooms} dataset. On the other hand, the \textbf{S2} strategy consistently achieves better or comparable optimality measures against \textbf{S3} in the middle stage of the execution, except for \textsc{Huestis} where  some awkward peaks in $\|d(x_k)\|$ occurred, though recovered at the end of the process. Except for \textsc{Mushrooms}, the  performance of \textbf{S2} is overall superior to  the  performance of \textbf{S1} and this fact can be attributed to the incorporation of the diminishing $\alpha_{k+1}$ in \textbf{S2}. Summarizing the results obtained, a tuned implementation of Algorithm~\name\ combined with the \textbf{S2} strategy is effective on our test problems. 

We conclude this analysis, showing the impact of the BB step-length update period, denoted by  $C$, of $\delta_k$ in \eqref{BBdelay_s}. The results presented above correspond to $C=20$. We tested the values  $C\in \mathcal{T}_2 = \{1, 5, 20, 100\}$ in the strategy \textbf{S2} with  $\gamma_0$ as in Table~\ref{tab:optimalParam}. Figure~\ref{fig:exact_m} illustrates the behavior of the optimality measure versus the iteration count. For the sake of readability, we display one point every 20 iterations. The behavior of the optimality measure is only slightly influenced by the value of $C$, although $C=100$ appears to be less effective. On the other hand, the cost in terms of stochastic gradient evaluations depends on $C$ since the smaller $C$ the higher the cost required to evaluate  $z_{k-1}$ in (\ref{BBdelay_s}); thus $C=20$ represents a good tradeoff between efficiency and cost.

\begin{figure}[t]
\centering
\begin{subfigure}[t]{\textwidth}
\centering
\includegraphics[scale=0.19]{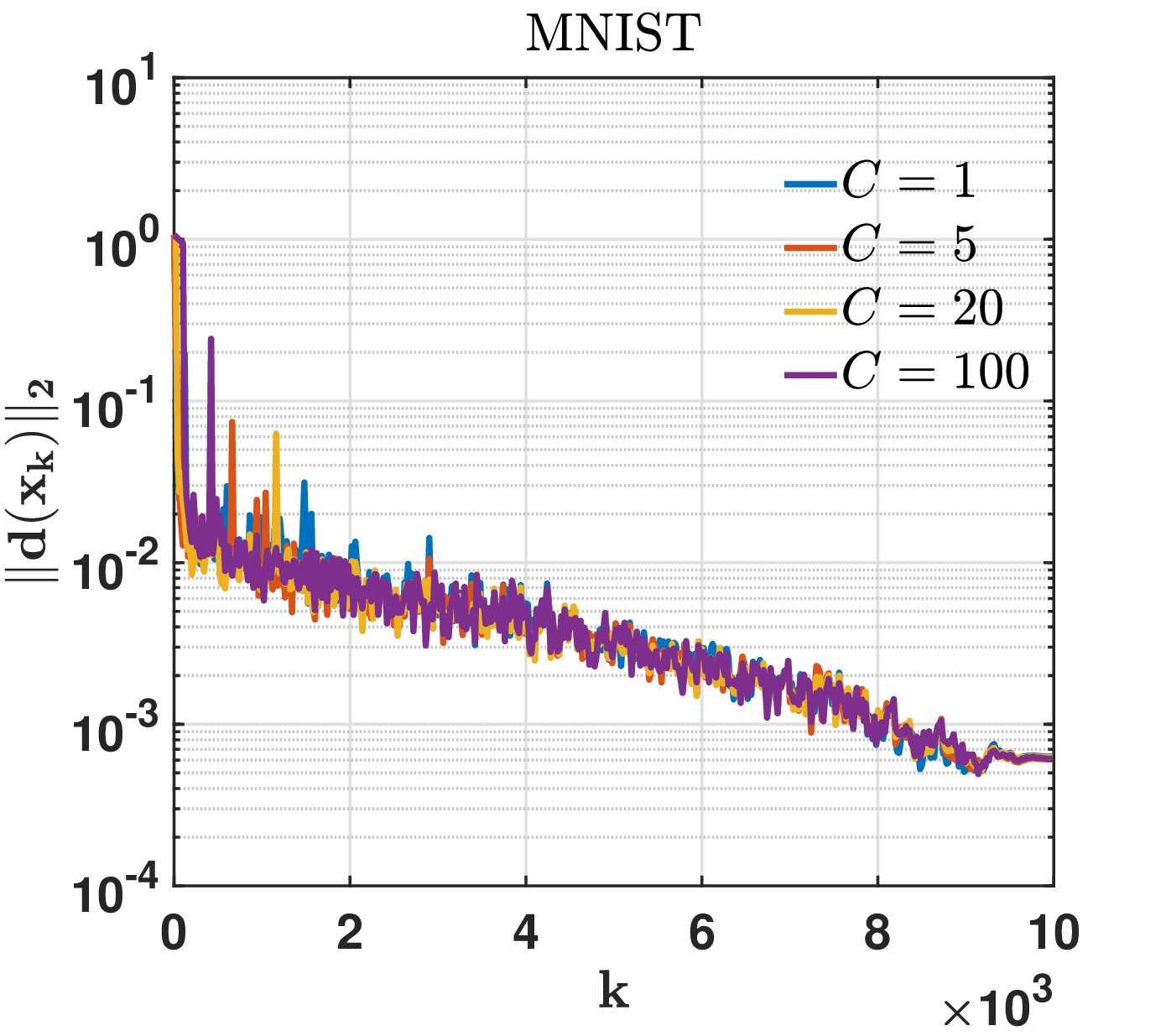}\hspace{-3mm}  
\includegraphics[scale=0.19]{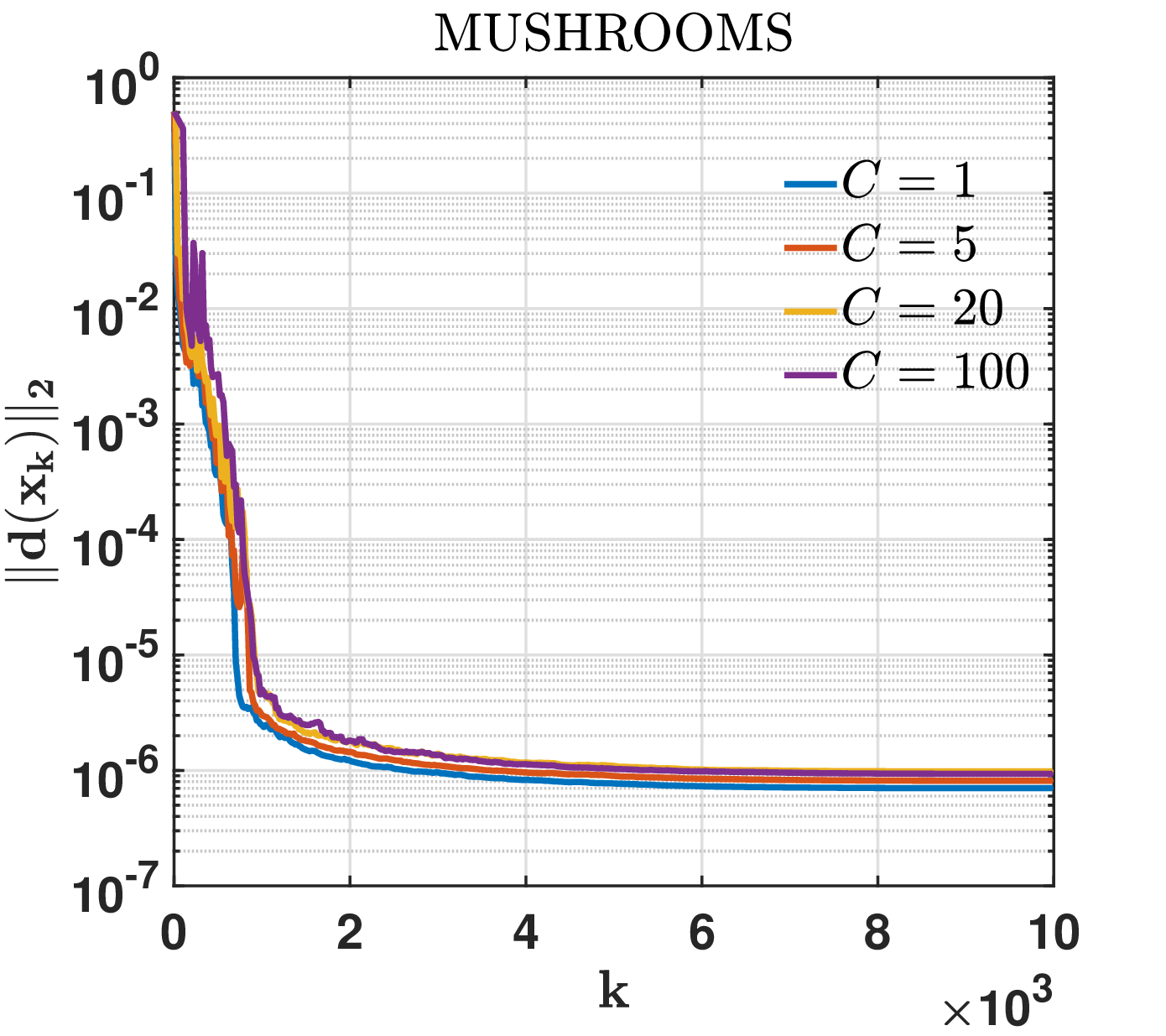}\hspace{-3mm}
\end{subfigure}
\caption{\small Average value of $\|d(x_k)\|$ vs. iterations obtained by \name\ with \textbf{S2} and exact projection for varying $C$.}
  \label{fig:exact_m}
\end{figure}


\begin{table}[t]
\centering
\caption{ \small{Selected parameter values for different step-size strategies across datasets.}}
\label{tab:optimalParam}
\renewcommand{\arraystretch}{1.3}
\footnotesize 
\begin{tabular*}{\textwidth}{@{\extracolsep{\fill}} l c c c c c c}
\hline
Strategy ($\alpha/\gamma_0$) 
& \textsc{Mnist}
& \textsc{Mushrooms} 
& \textsc{Diabetes} 
& \textsc{Dtoc1l} 
& \textsc{Huestis} 
& \textsc{Hs50} \\ 
\hline

$\mathbf{S1}\, (\alpha)$ 
& $10^{-2}$ 
& $10$ 
& $10^{-3}$ 
& $10^{-3}$ 
& $1$ 
& $10^{-2}$ \\[.5mm]

$\mathbf{S2}\, (\gamma_0)$ 
& $1$ 
& $10$ 
& $10^{-2}$ 
& $10^{-2}$ 
& $1$ 
& $10^{-2}$ \\[.5mm]

$\mathbf{S3}\, (\gamma_0)$ 
& $10$ 
& $10$ 
& $1$ 
& $10^{-3}$ 
& $10^{-2}$ 
& $10^{-3}$ \\[.5mm]
\hline
\end{tabular*}
\end{table}

\newpage
\subsubsection{Mini-Batch Size Selection}\label{Sec.Sensi}
The results presented above correspond to batch sizes, denoted as  $N_b$,  equal to 64 for \textsc{Diabetes} and to 256 for the remaining datasets, as stated in \S \ref{SubSec_conf}. Now, we investigate the influence of $N_b$ by testing values in the set $\mathcal{T}_3 = \{64,256, 0.2N\}$. Figure~\ref{fig:exact_Nk} shows the optimality measure versus iterations for \names with \textbf{S2} and $\gamma_0$ from Table~\ref{tab:optimalParam}; values are plotted every 20 iterations for readability. 

The initial behavior of the optimality measure is similar for all choices of $N_b$. The smallest optimality values obtained with $N_b=64$ and $N_b=256$ are comparable, except for \textsc{Mushrooms}, where $N_b=64$ is particularly effective. Choosing $N_b=0.2N$ provides some advantages, except on \textsc{Mushrooms}. Overall, with a tuned step-size selection, the choice of $N_b$ is not crucial for the effectiveness and efficiency of \name.

\begin{figure}[t]
\centering
\begin{subfigure}[t]{\textwidth}
\centering
\includegraphics[scale=0.145]{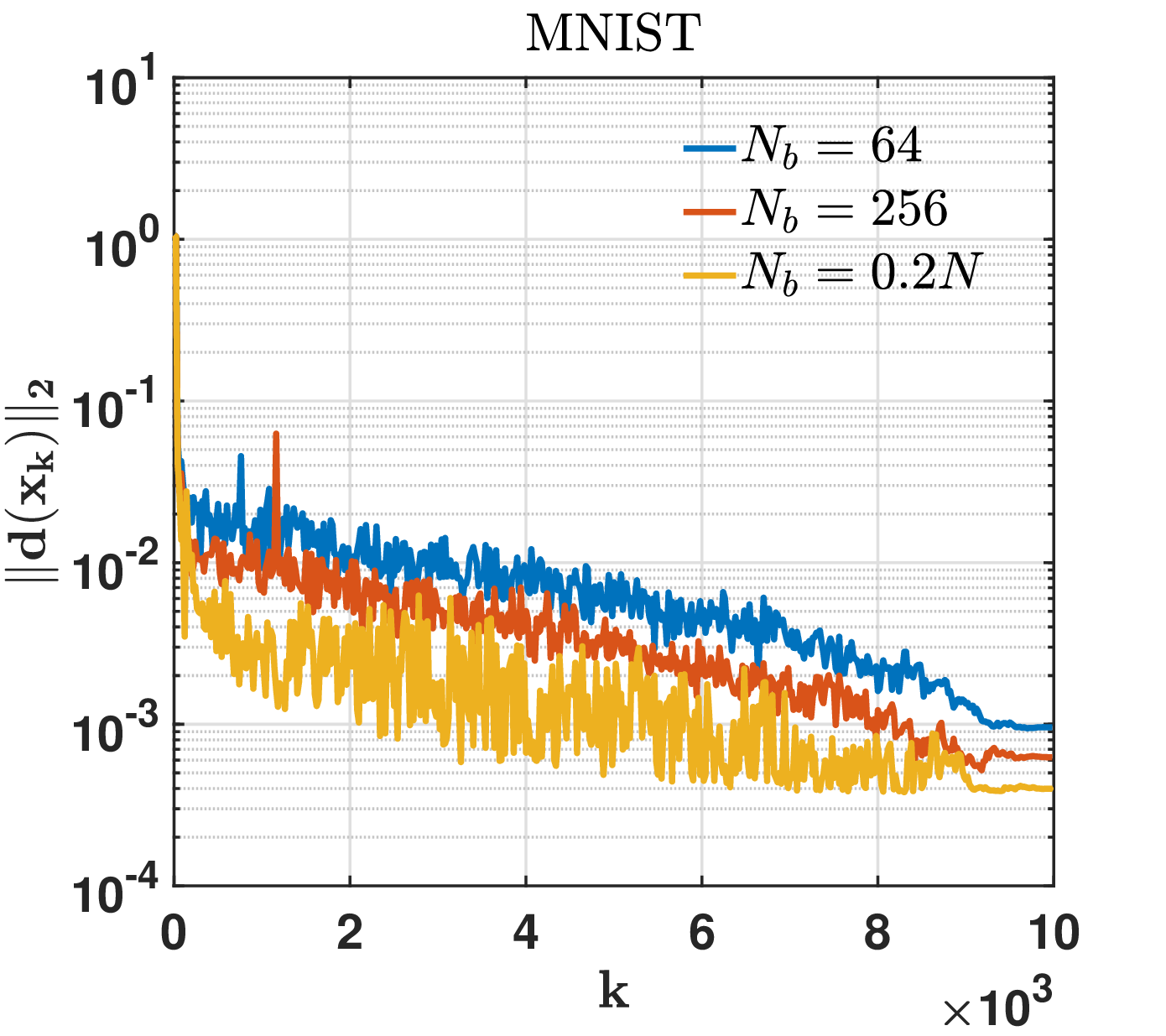}\hspace{-3mm} 
\includegraphics[scale=0.145]{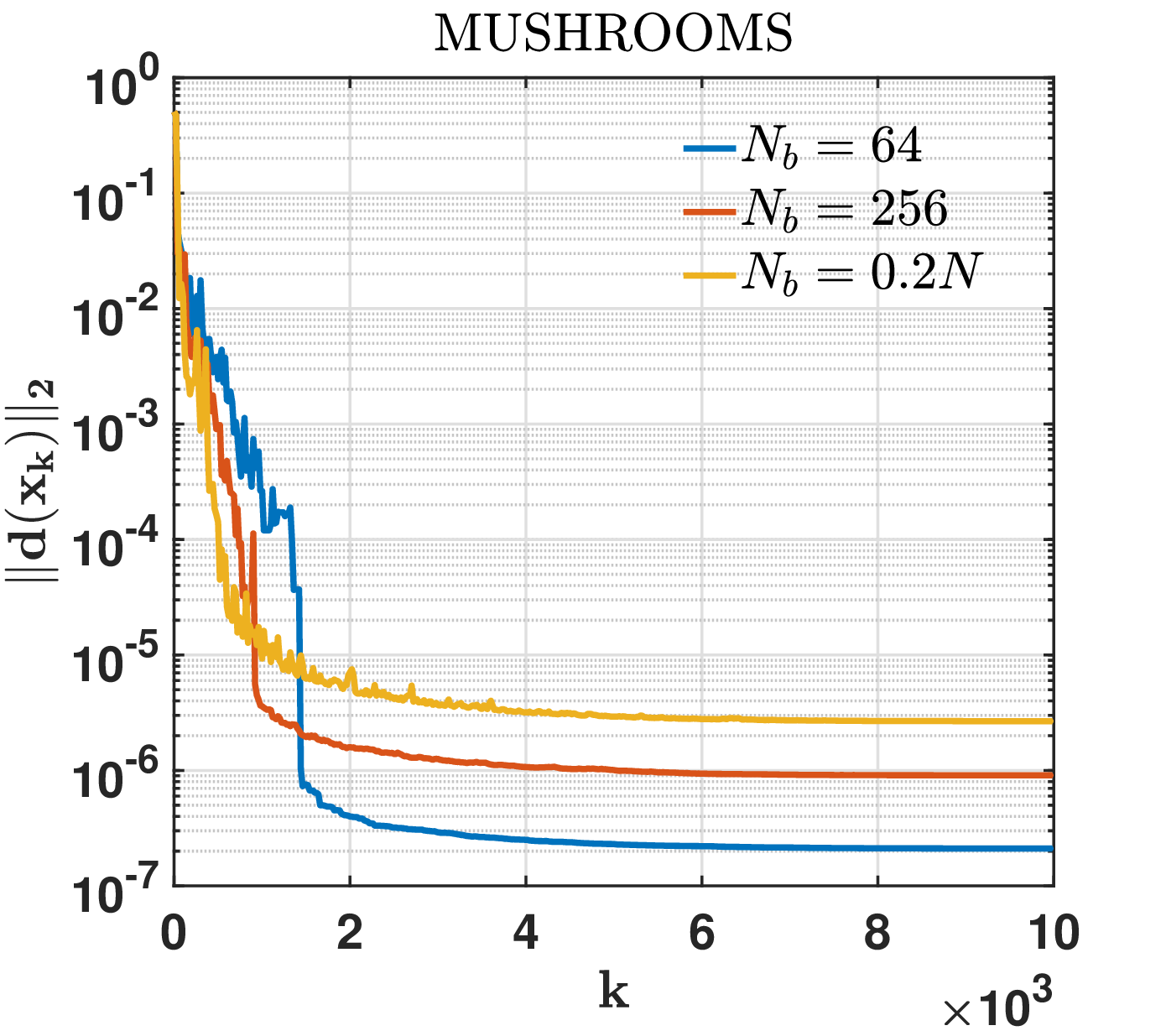}\hspace{-3mm}
\includegraphics[scale=0.145]{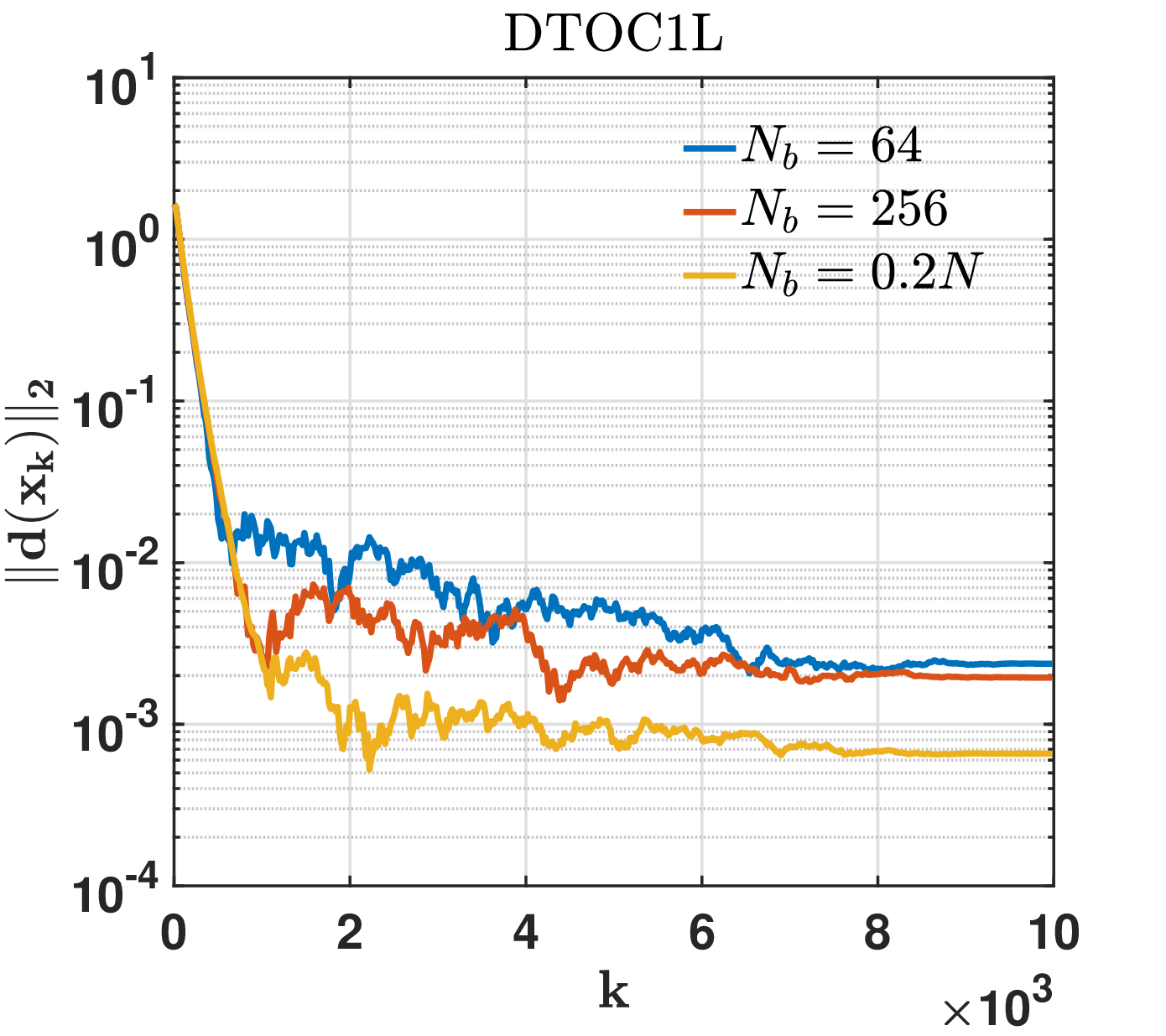}\hspace{-3mm}
\includegraphics[scale=0.145]{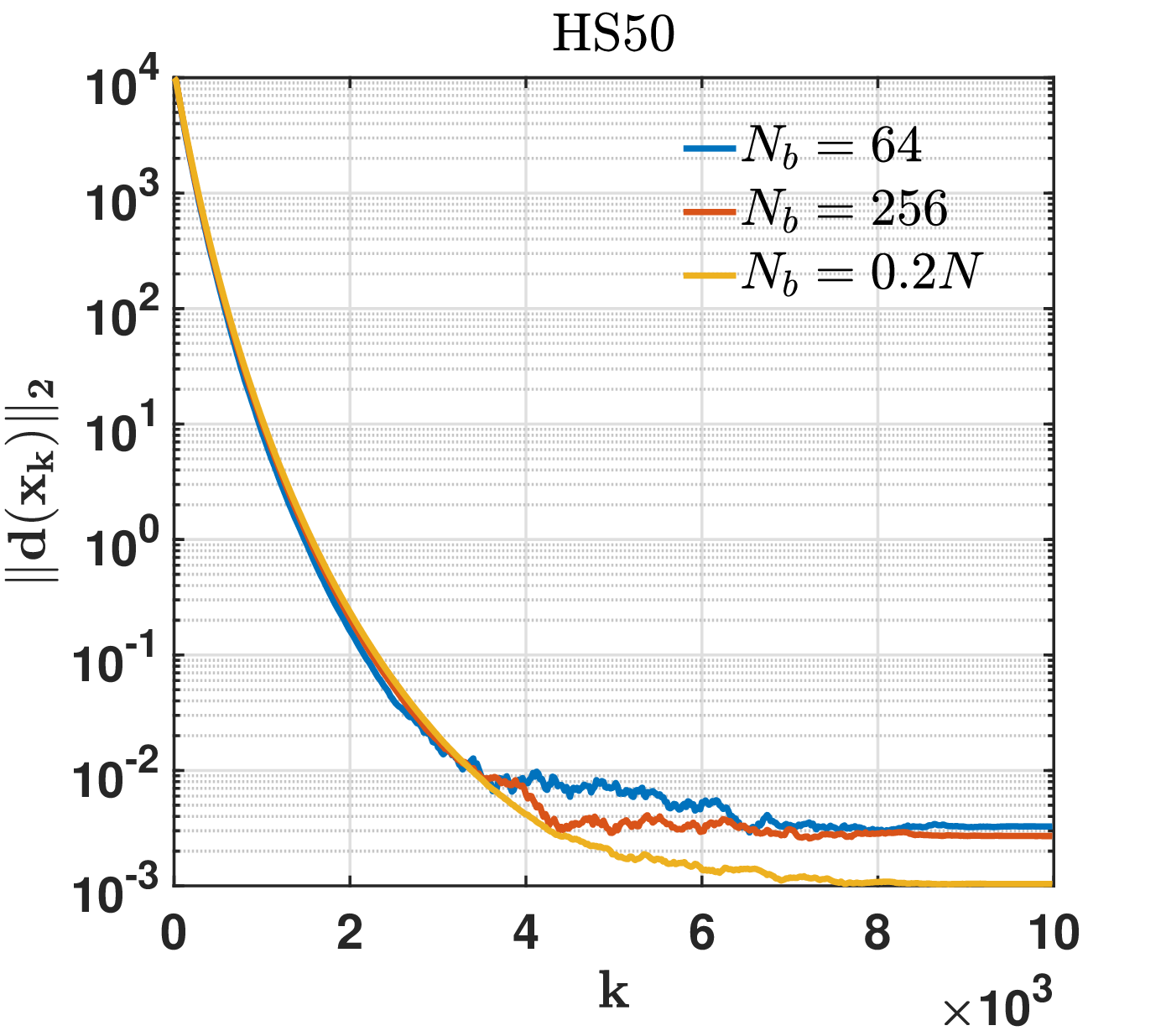}
\end{subfigure}  
\caption{\small Average value of $\|d(x_k)\|$  vs. iterations obtained by \name\ with strategy \textbf{S2} and exact projection for varying $N_b$.}
  \label{fig:exact_Nk}
\end{figure}

\subsection{Inexact Projection}\label{Sec:inexactp}{
 We now consider \names with the inexact projection $\widetilde{\pi}_S$ in (\ref{inexactprojection}). The evaluation of $\widetilde{\pi}_S$ is performed as discussed in \S \ref{SubSec_conf}. We tested \names Algorithm coupled with the strategy \textbf{S2} with $\gamma_0$ as in Table \ref{tab:optimalParam} over \textsc{Mnist} and \textsc{Mushroom} due to their relatively larger values of $m$. We let $\eta\in \mathcal{T}_4 = \{0.2, 0.5, 0.7, 0.9\}$, $\rho\in \mathcal{T}_5 = \{0.90, 0.95, 0.98, 0.99\}$, $\mu_0=0.1$, and studied the potential effect of allowing looser linear solver accuracy in the initial phase of the iterations.

Figure~\ref{fig:inexact_etamu} shows that $\|d(x_k)\|$ is generally insensitive to $\eta$ and $\rho$. Slightly higher values of  $\|d(x_k)\|$ are observed in the first phase of the iterative process for the large  values of $\rho$, but  $\mu_k = \mu_0 \rho^k$ is supposed to vanish eventually; thus changes in the values of $\rho$ as well as $\eta$ become negligible due to the specific form of the upper bound in (\ref{resb}).

Figure~\ref{fig:inexact_etamu2} illustrates the optimality measure versus execution time (in seconds), given the prefixed number of iterations $k_{\max}=10^4$. It refers to $\eta_k=\eta=0.5$, for all $k\ge 0$, and concerns 
the parameter setting  $\rho\in \mathcal{T}_5$ and $\mu_0\in \mathcal{T}_6 = \{10^{-1}, 1,10,100\}$. We note that  \name\ is quite  insensitive to the choice of $\mu_0$ in $\rho$, although large  values of $\mu_0$ and $\rho$ yield some gain in terms of timings.

Finally, in Figure~\ref{fig:inexact}, we show  both the optimality measure $\|d(x_k)\|$ and the infeasibility measure $\|Ax_k-b\|$ versus  the iterations for \names with strategies \textbf{S1}--\textbf{S3} and parameters as in Table \ref{tab:OptParSens}; the lowest achieved values are reported in the corresponding legends.
The results refer to $\eta_k =  \eta = 0.5$, for all $k\ge 0$, $\mu_0=10^{-1}$ and $\rho=0.95$. We note that the infeasibility decreases fast and that the decrease of optimality is not affected by the inexact projection. 

\begin{figure}[t]
\centering
\begin{subfigure}[t]{\textwidth}
\centering
\includegraphics[scale=0.155]{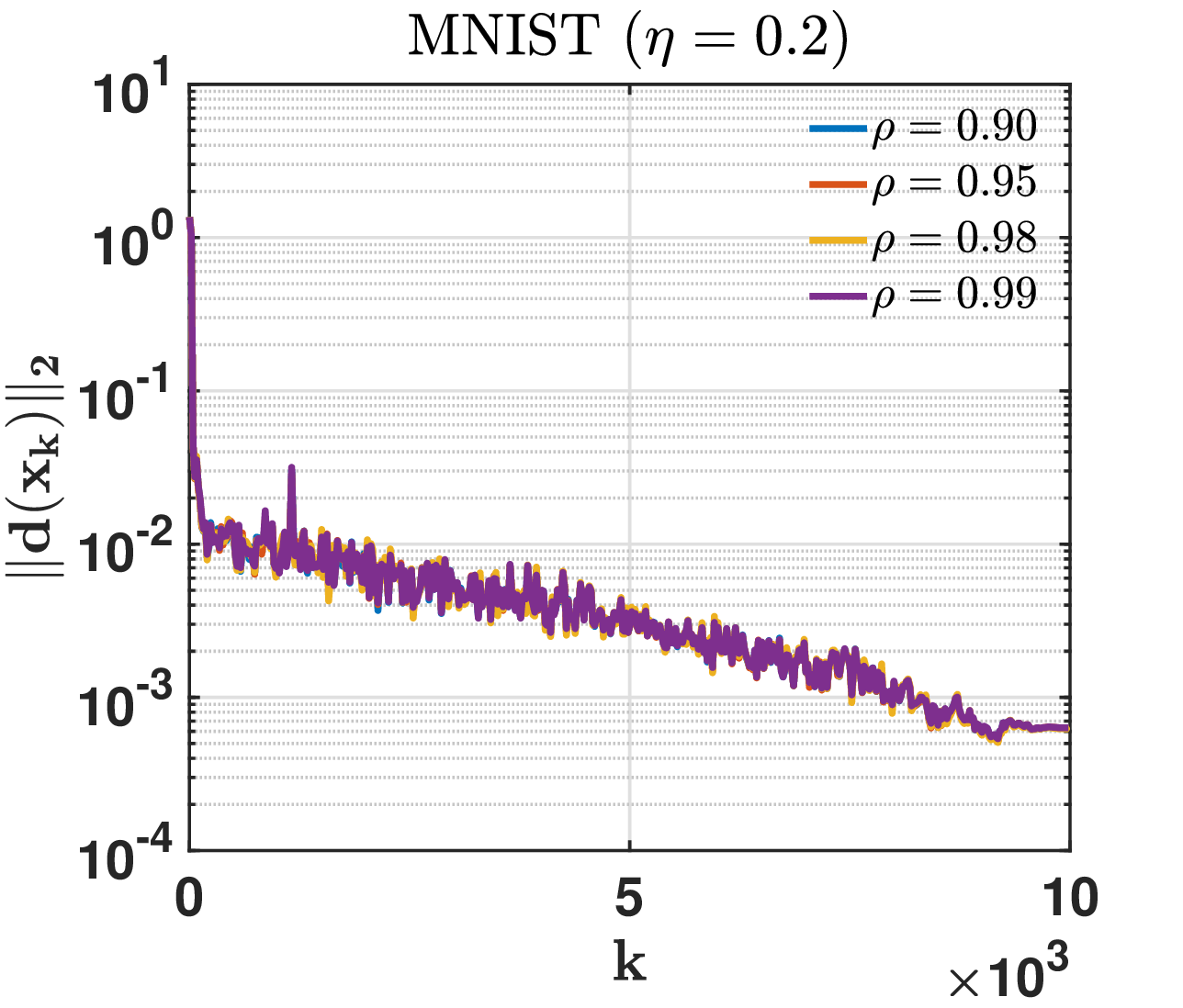}\hspace{-3mm} 
\includegraphics[scale=0.155]{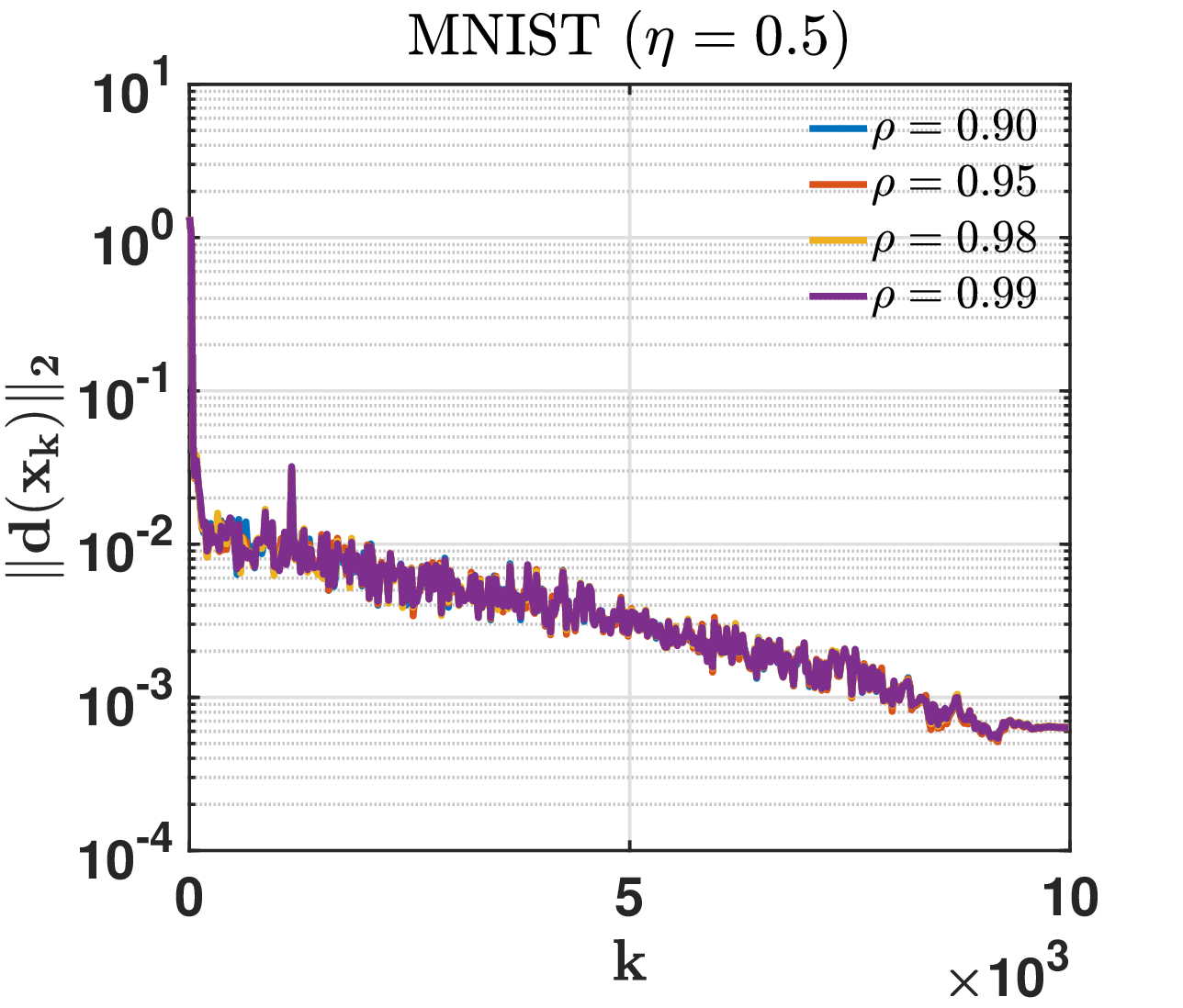}\hspace{-3mm}
\includegraphics[scale=0.155]{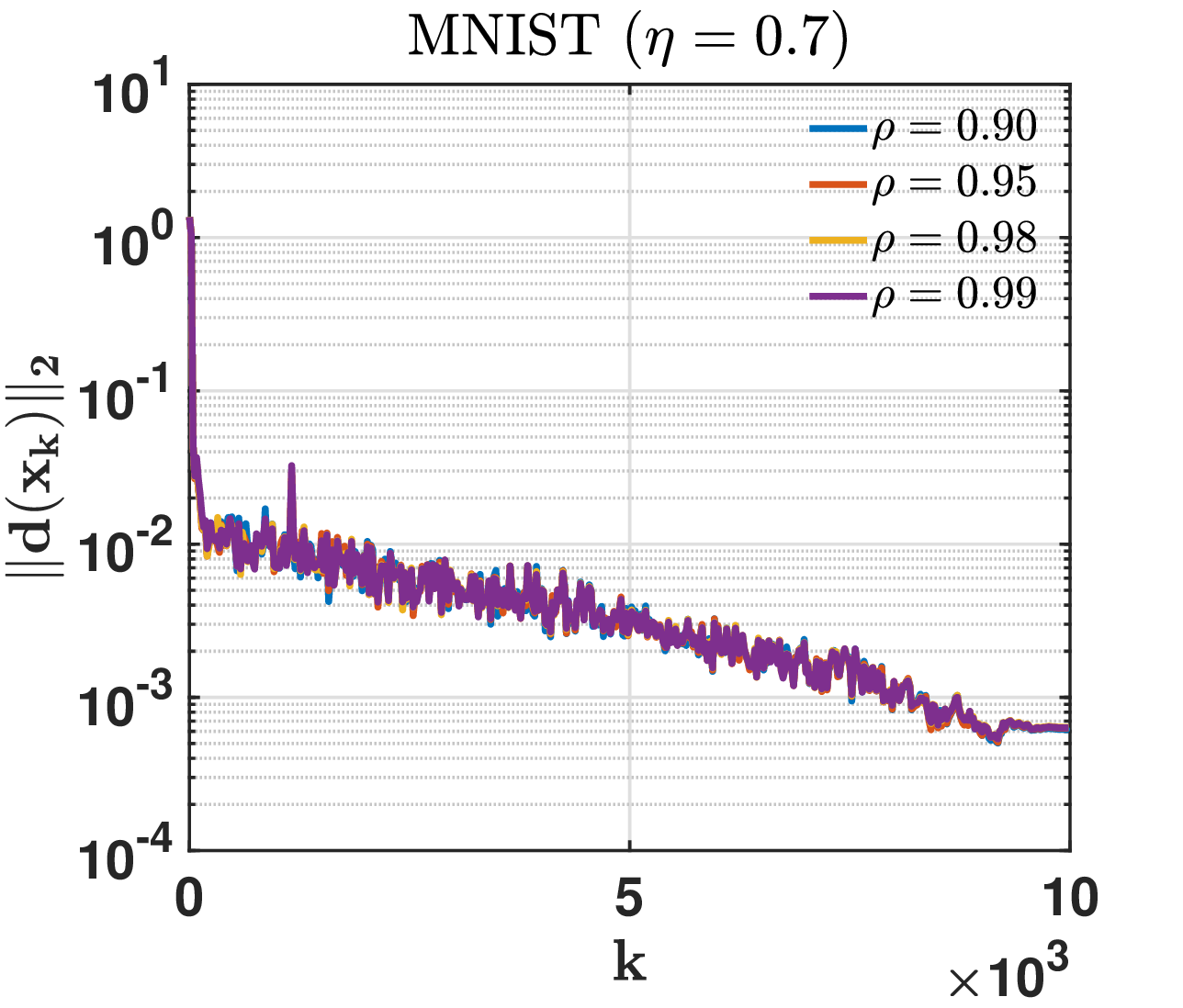}\hspace{-3mm}
\includegraphics[scale=0.155]{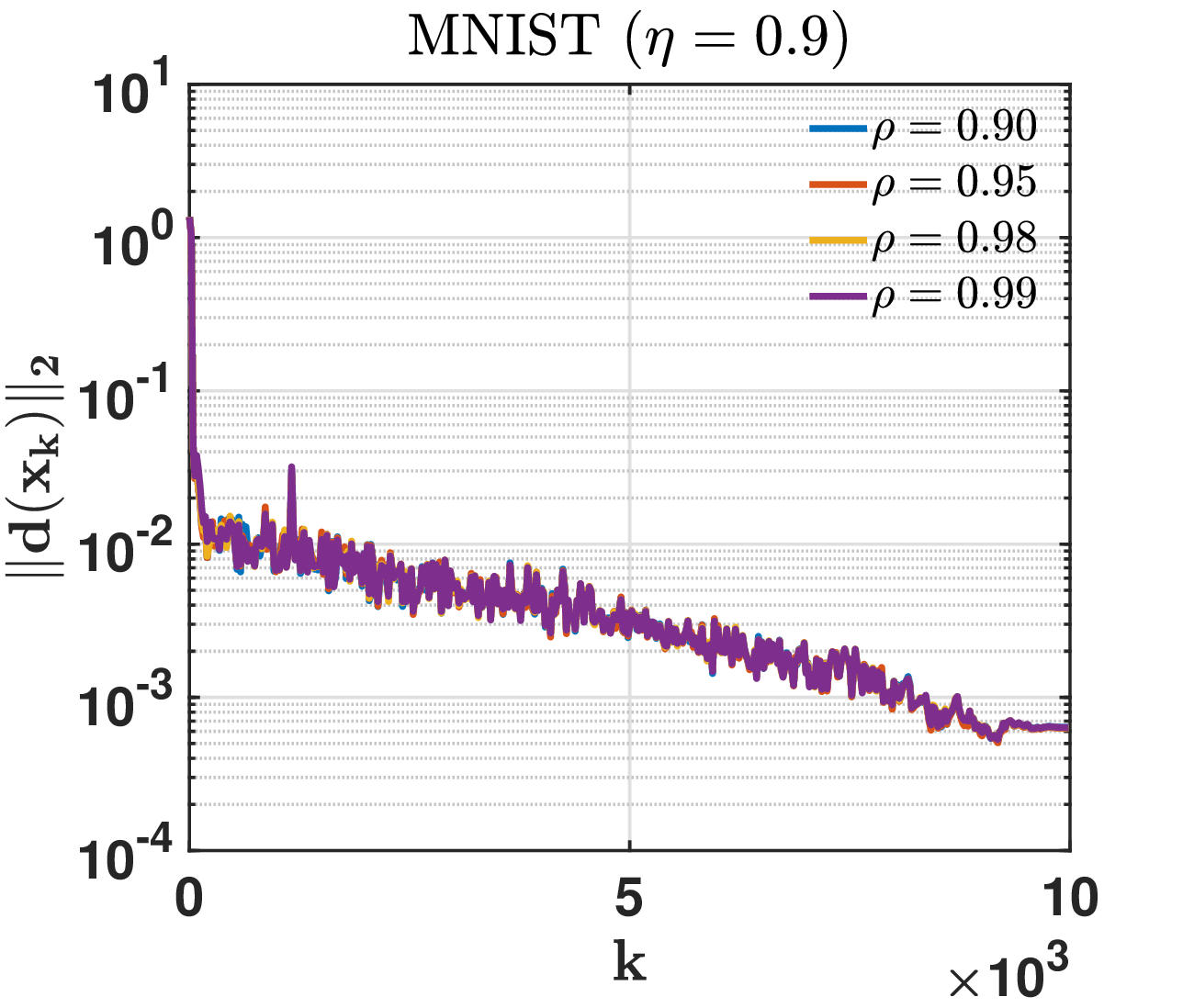}
\includegraphics[scale=0.155]{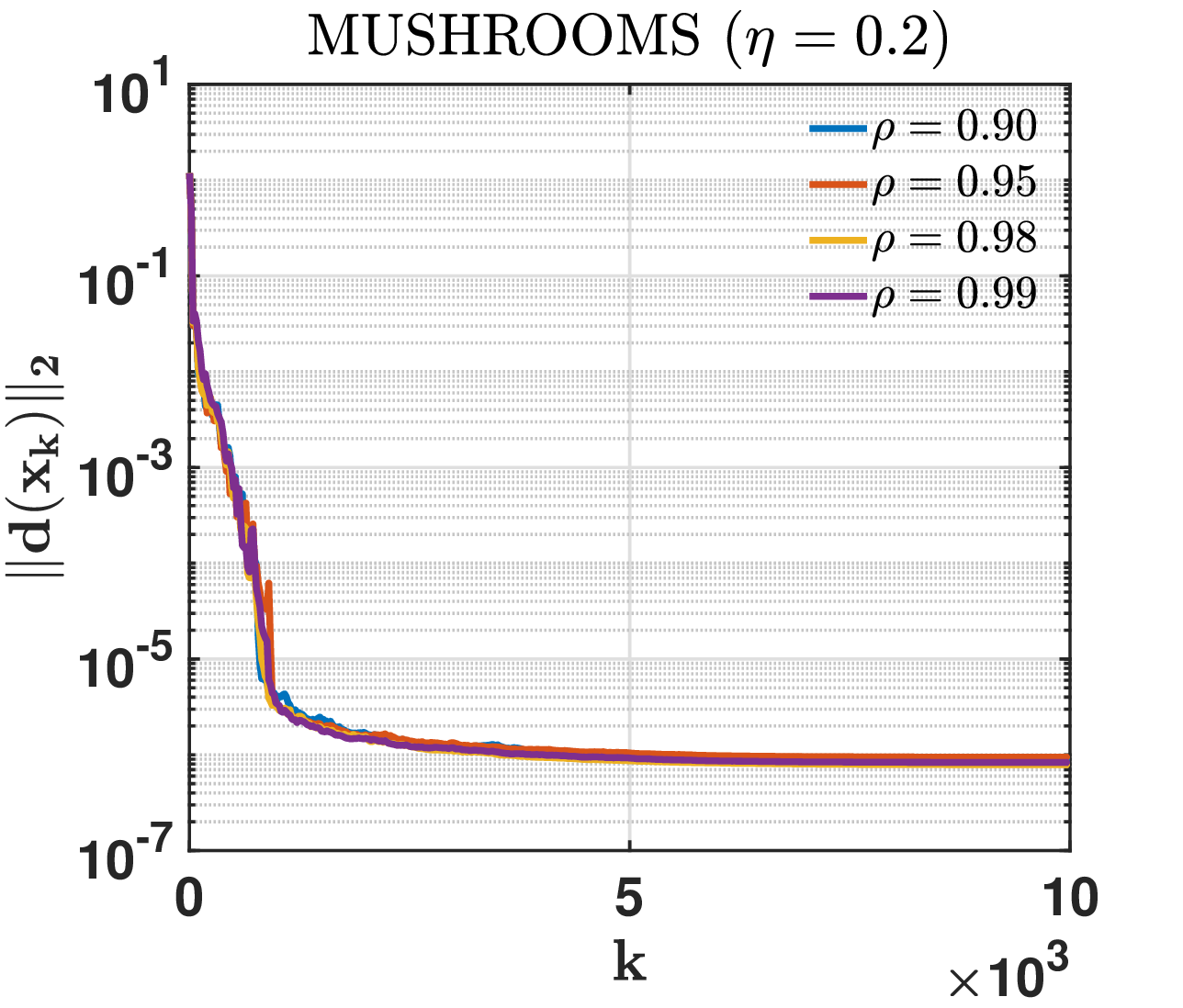}\hspace{-3mm} 
\includegraphics[scale=0.155]{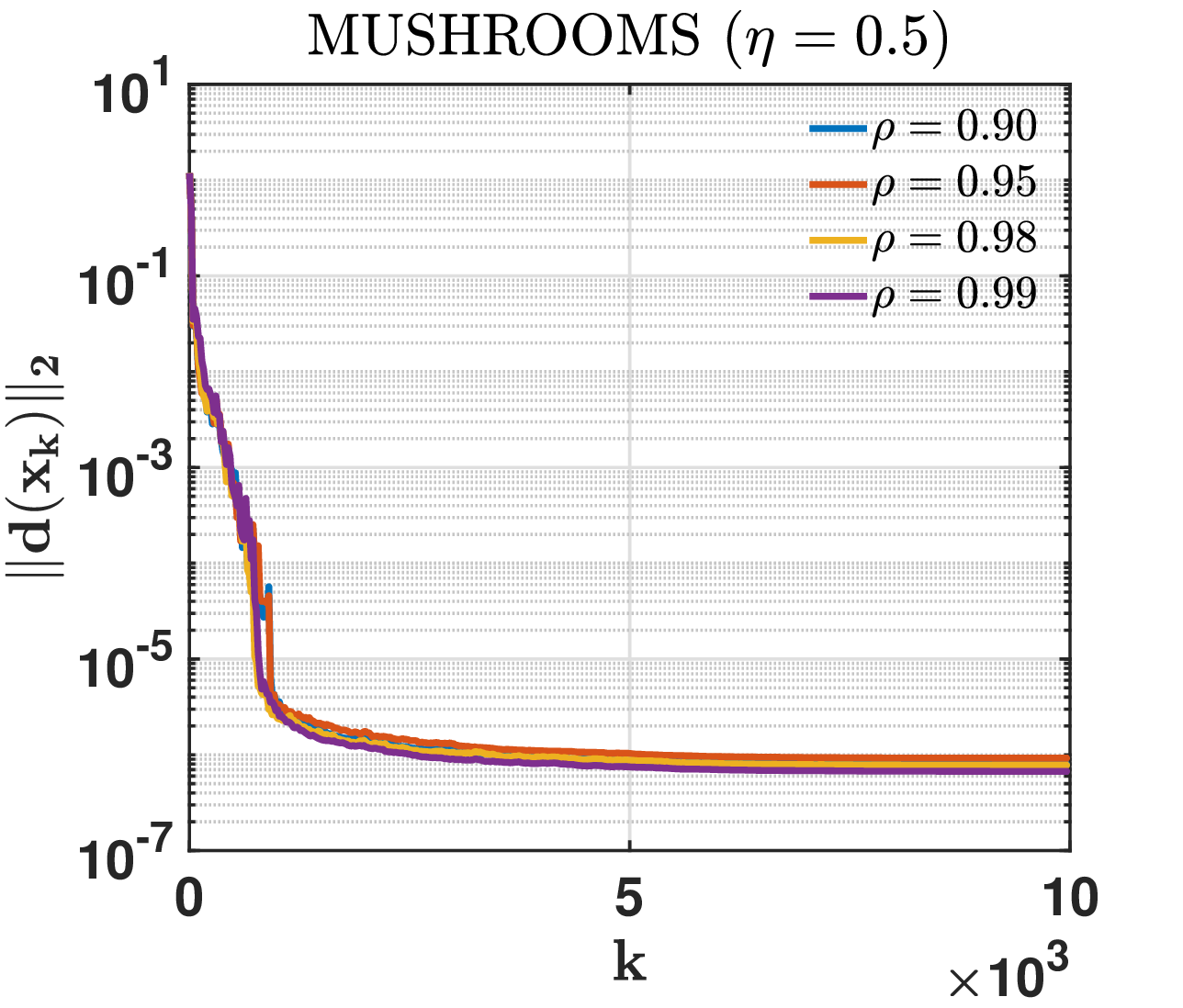}\hspace{-3mm}
\includegraphics[scale=0.155]{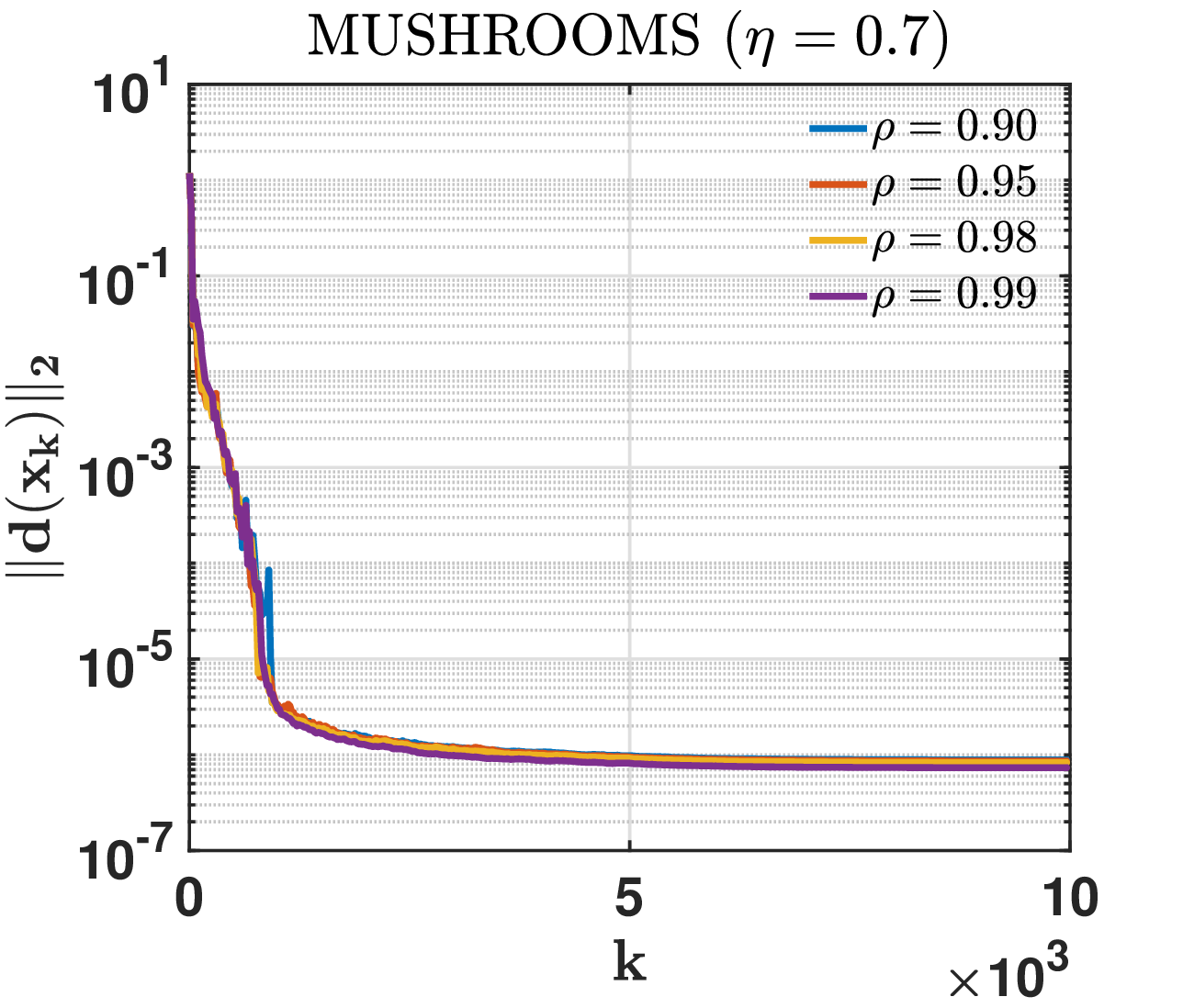}\hspace{-3mm}
\includegraphics[scale=0.155]{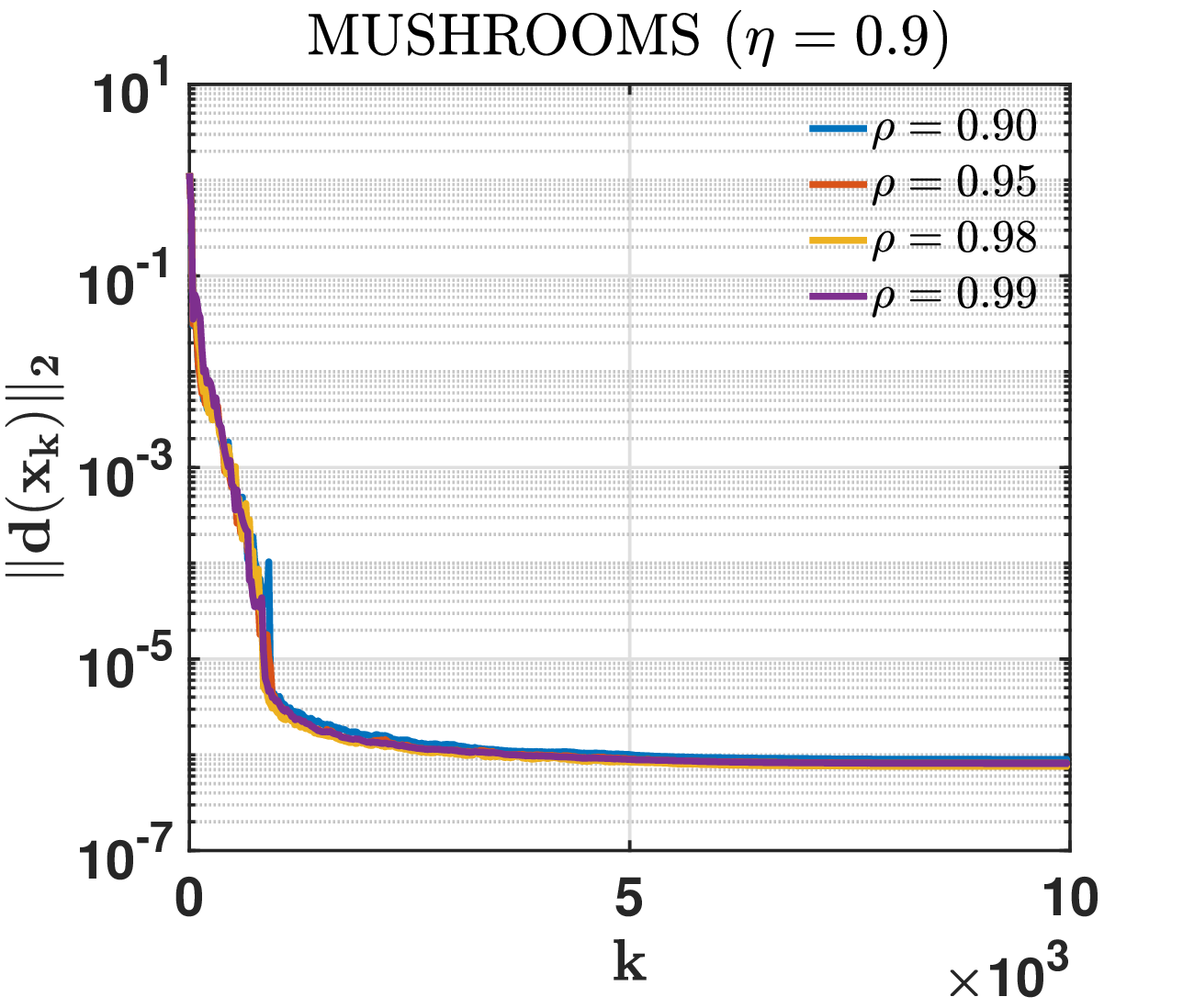}
\end{subfigure}  
\caption{\small Average values of $\|d(x_k)\|$ vs. iterations obtained by \name\  with strategy \textbf{S2}, inexact projection, $\mu_0=0.1$ and varying $\eta$, $\rho$.}
  \label{fig:inexact_etamu}
\end{figure}

\begin{figure}[t]
\centering
\begin{subfigure}[t]{\textwidth}
\centering
\includegraphics[scale=0.155]{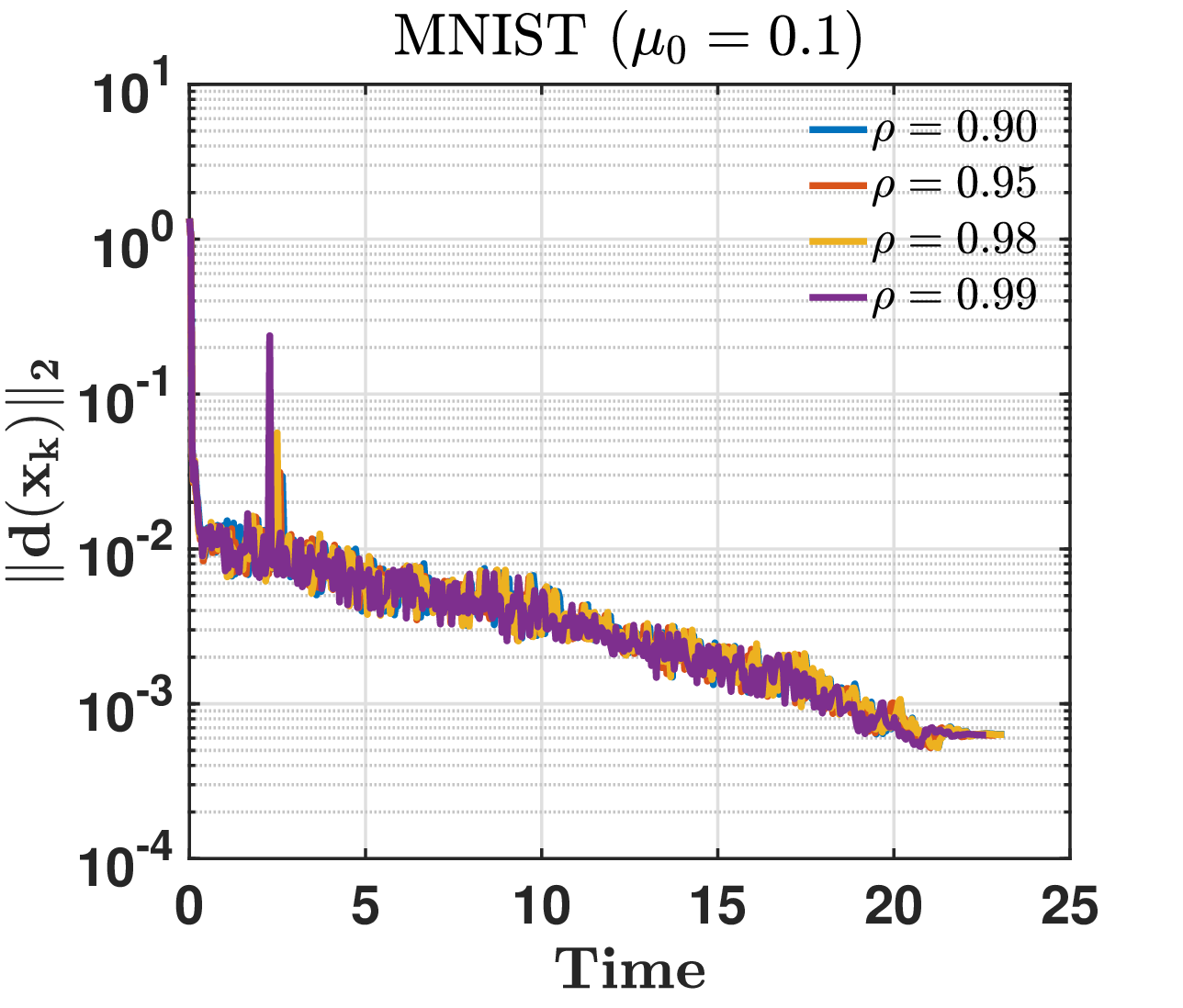}\hspace{-3mm} 
\includegraphics[scale=0.155]{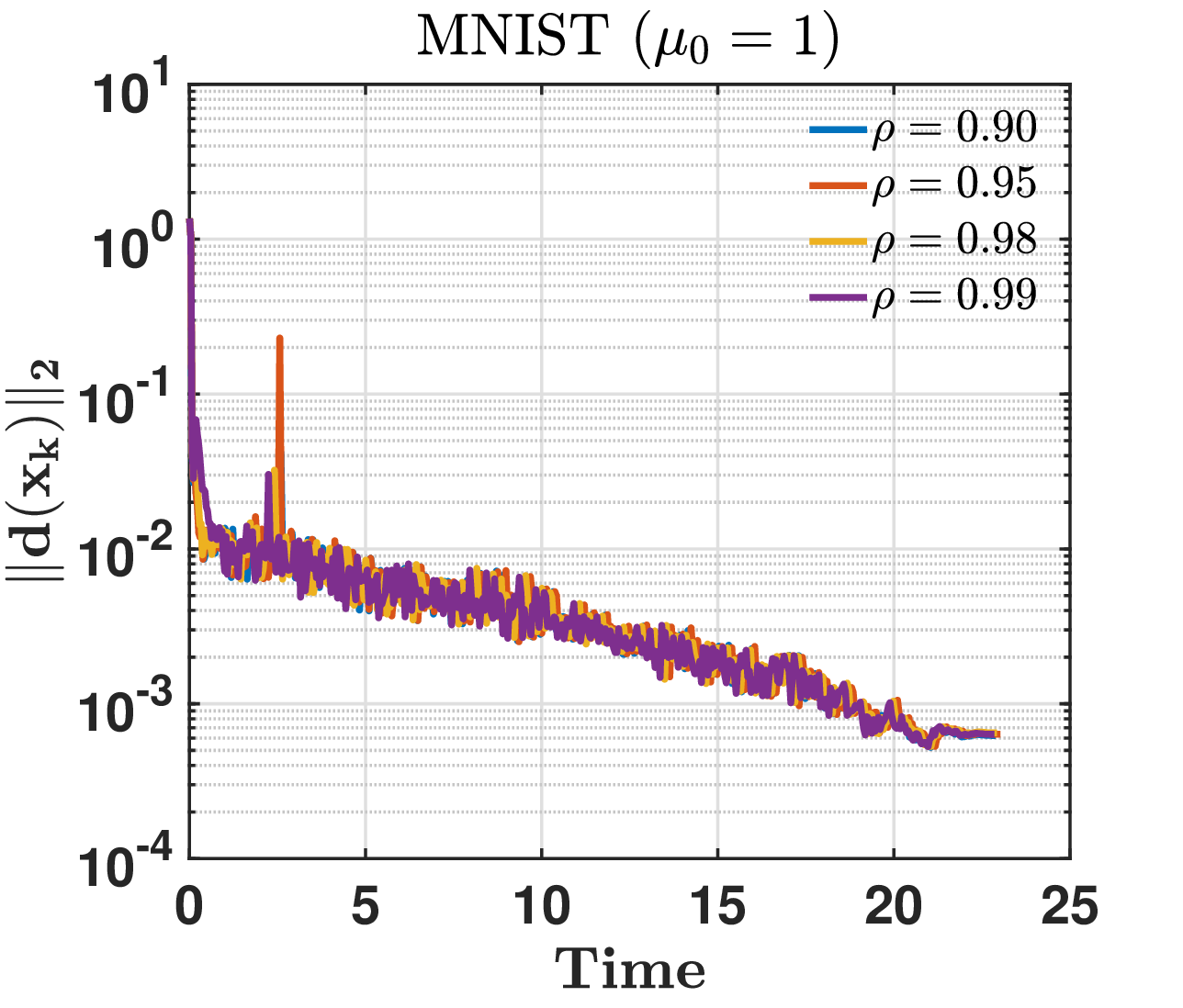}\hspace{-3mm}
\includegraphics[scale=0.155]{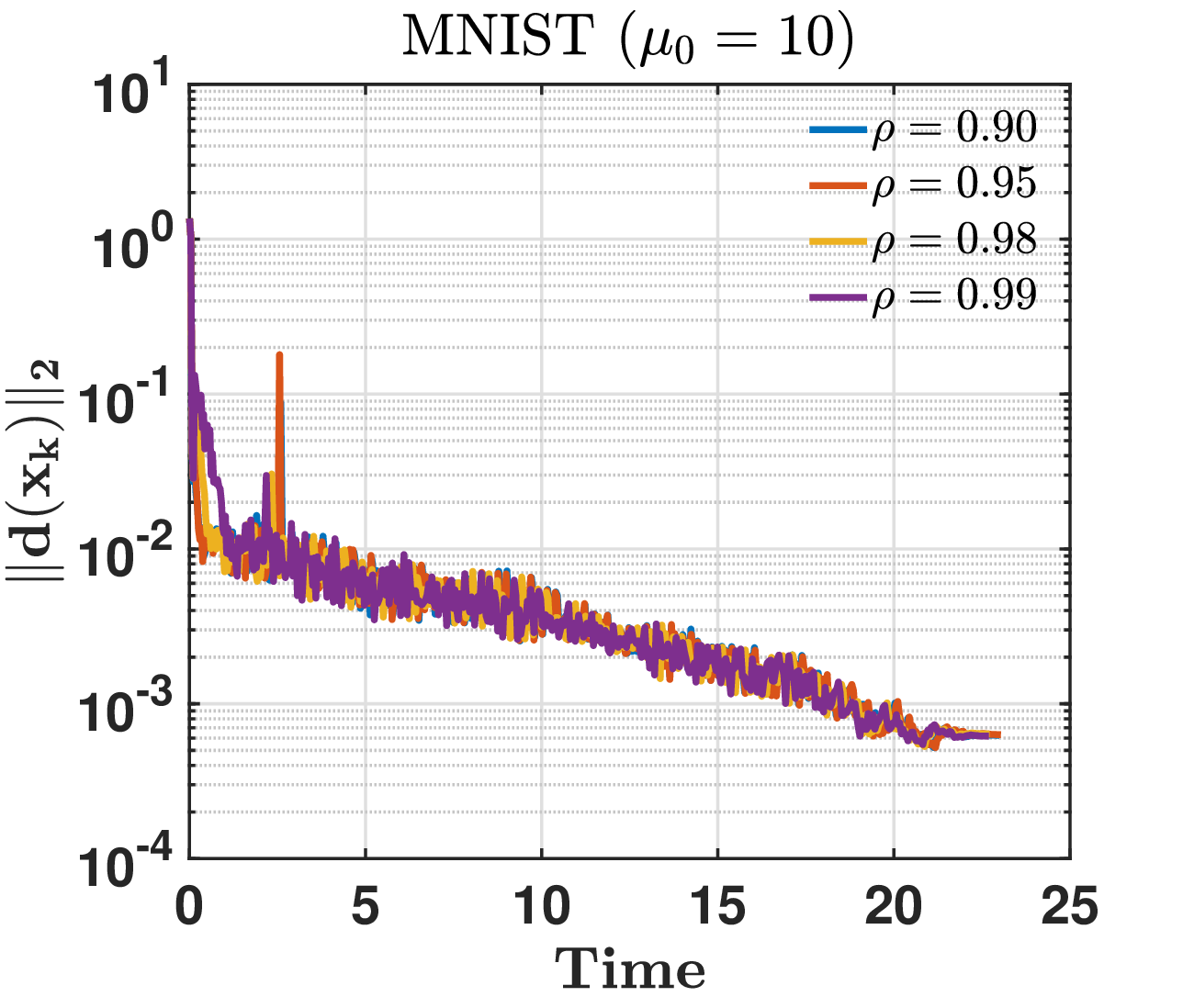}\hspace{-3mm}
\includegraphics[scale=0.155]{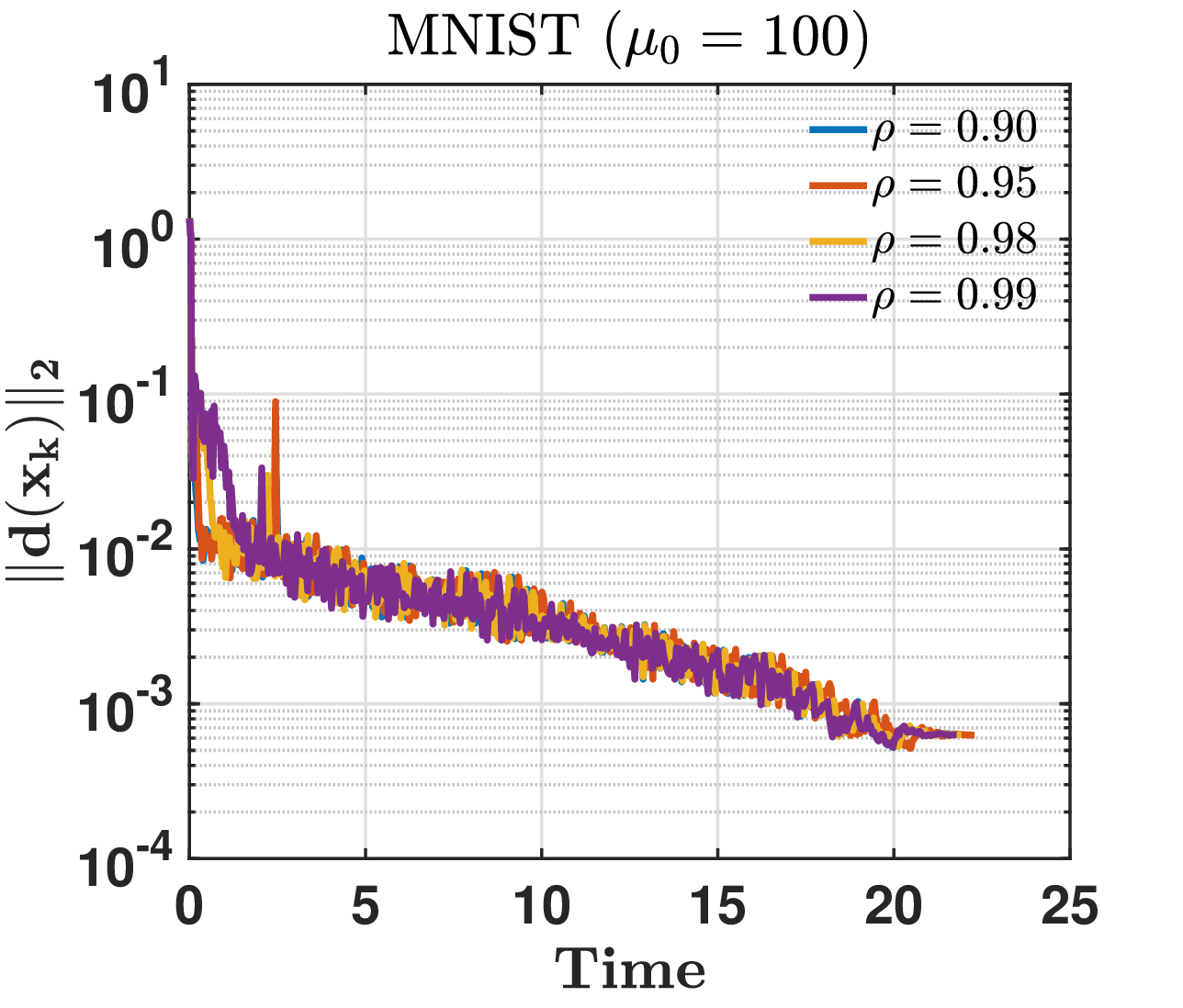}
\includegraphics[scale=0.155]{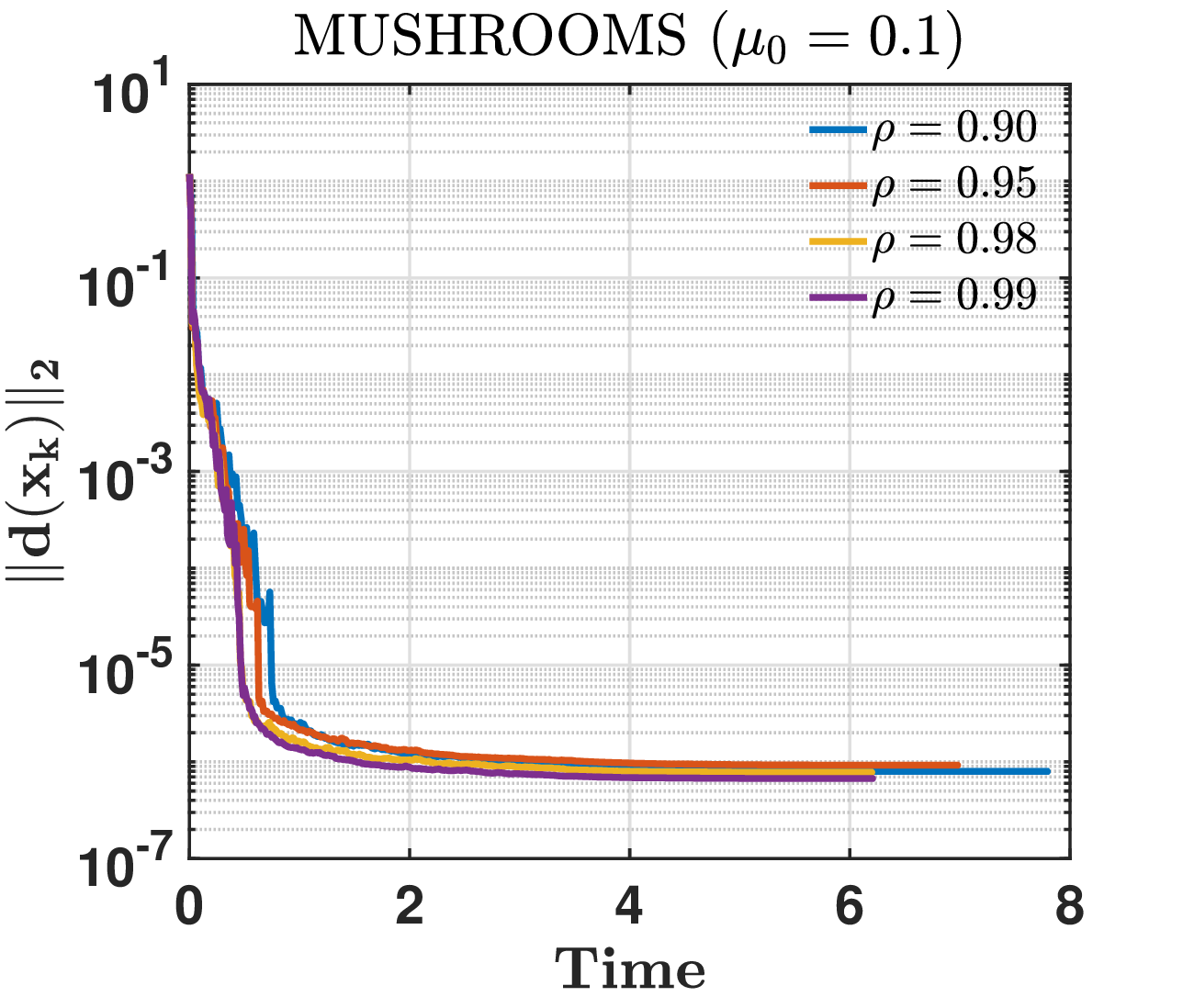}\hspace{-3mm} 
\includegraphics[scale=0.155]{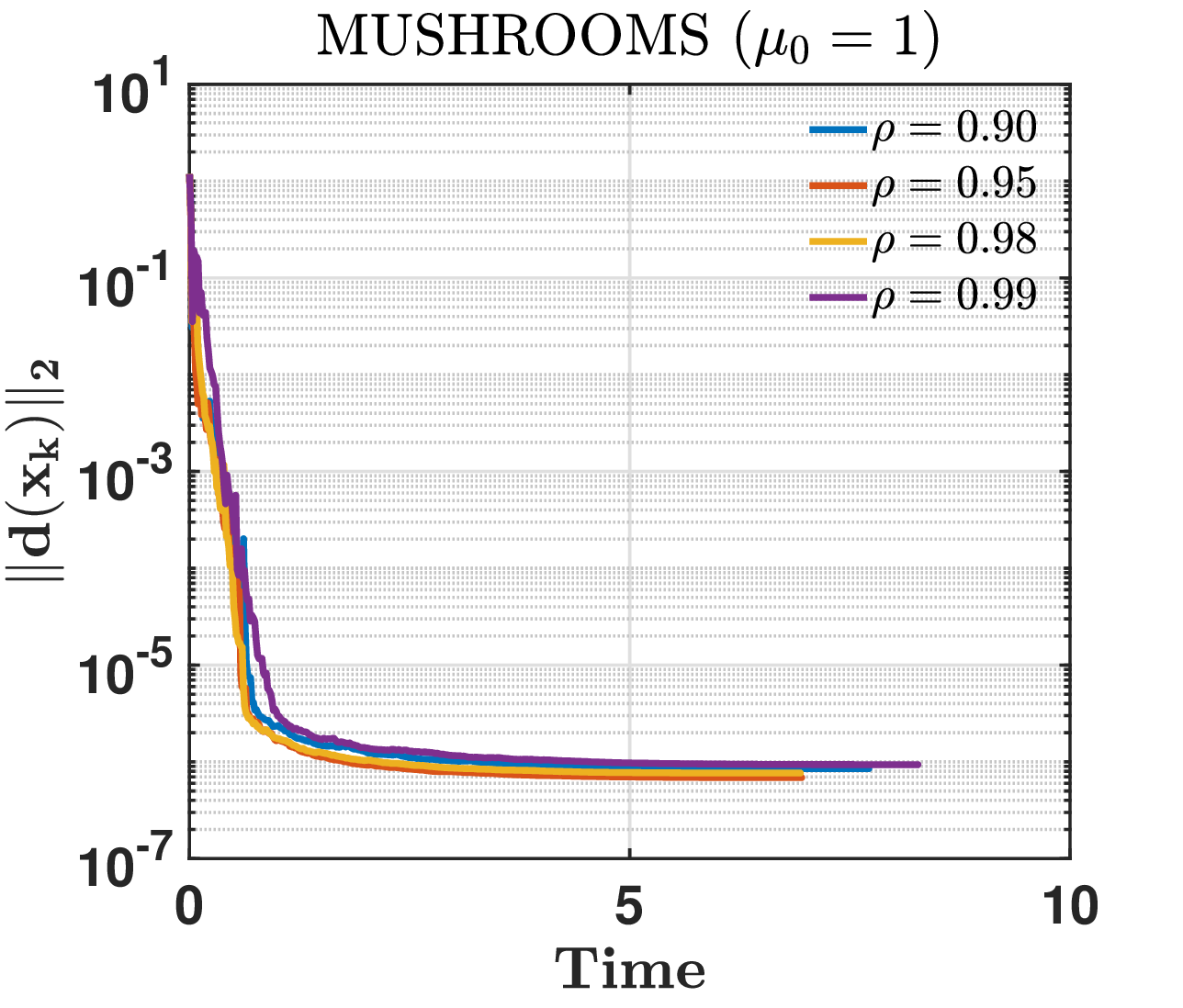}\hspace{-3mm}
\includegraphics[scale=0.155]{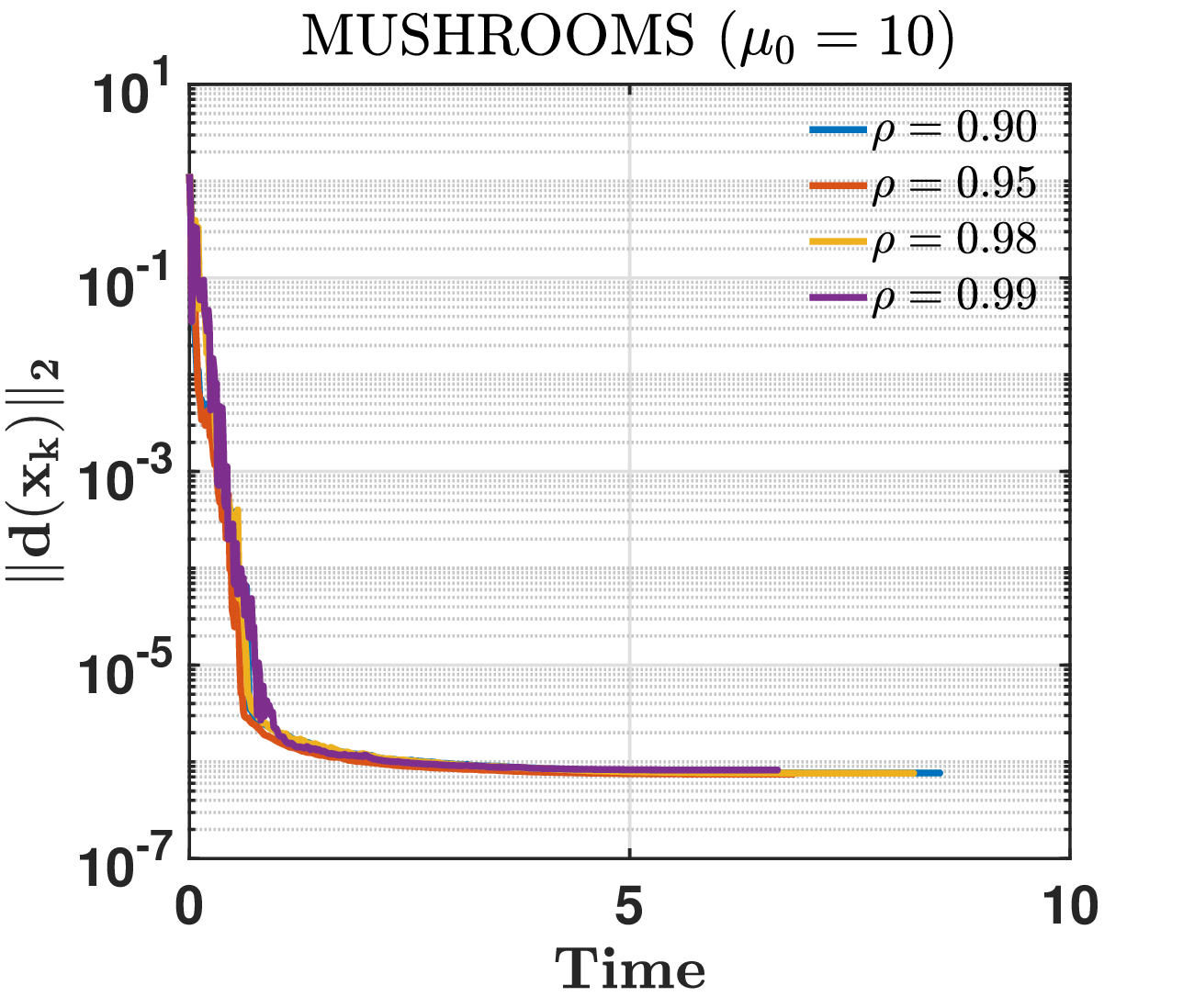}\hspace{-3mm}
\includegraphics[scale=0.155]{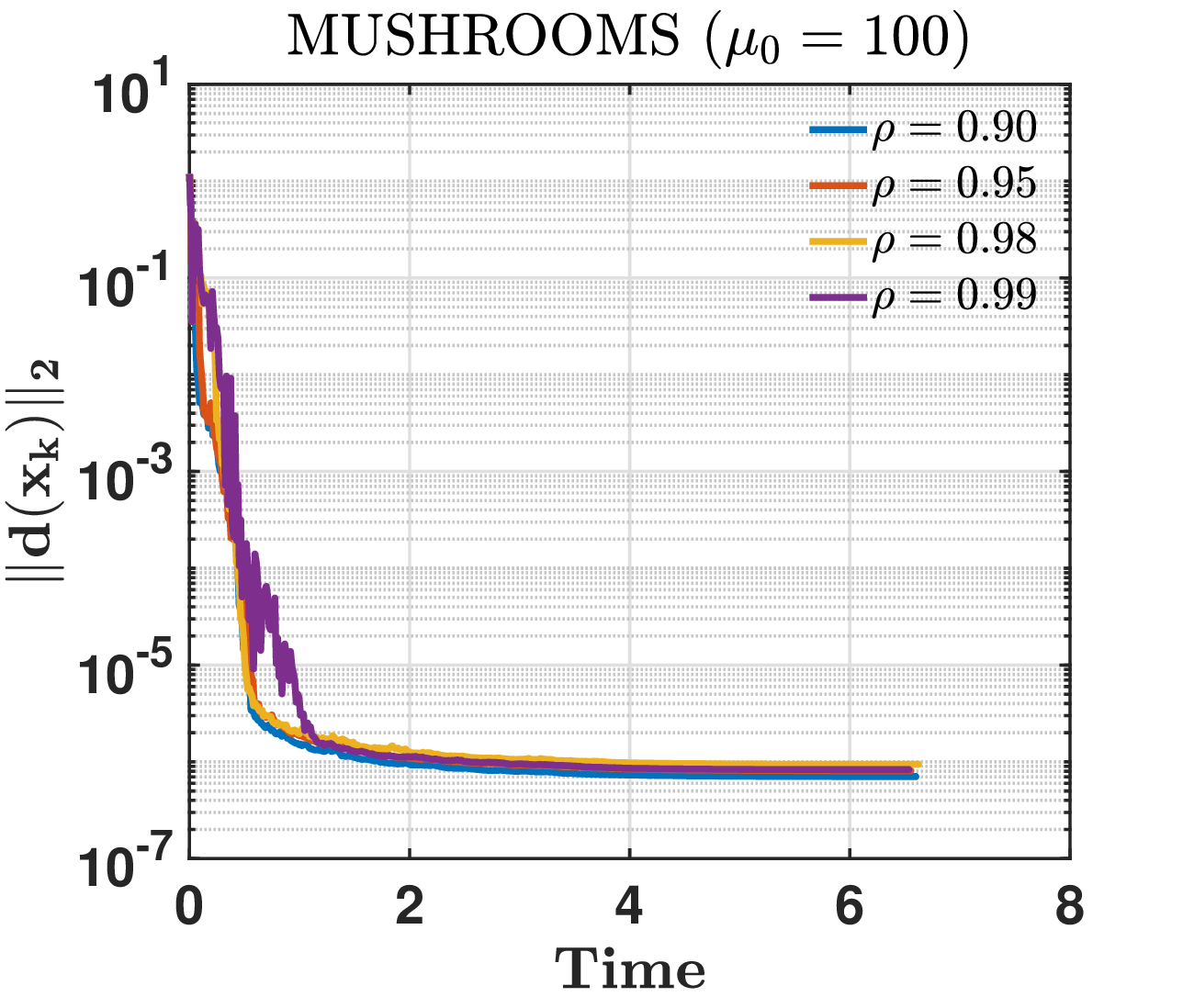}
\end{subfigure}  
\caption{\small Average value of $\|d(x_k)\|$ vs. CPU time obtained by \name\   with strategy \textbf{S2} and the inexact projection for $\eta=0.5$ and varying $\mu_0$, $\rho$.}
  \label{fig:inexact_etamu2}
\end{figure}

\begin{figure}[t]
  \centering
  \begin{subfigure}[t]{\textwidth}
    \centering
    \includegraphics[scale=0.19]{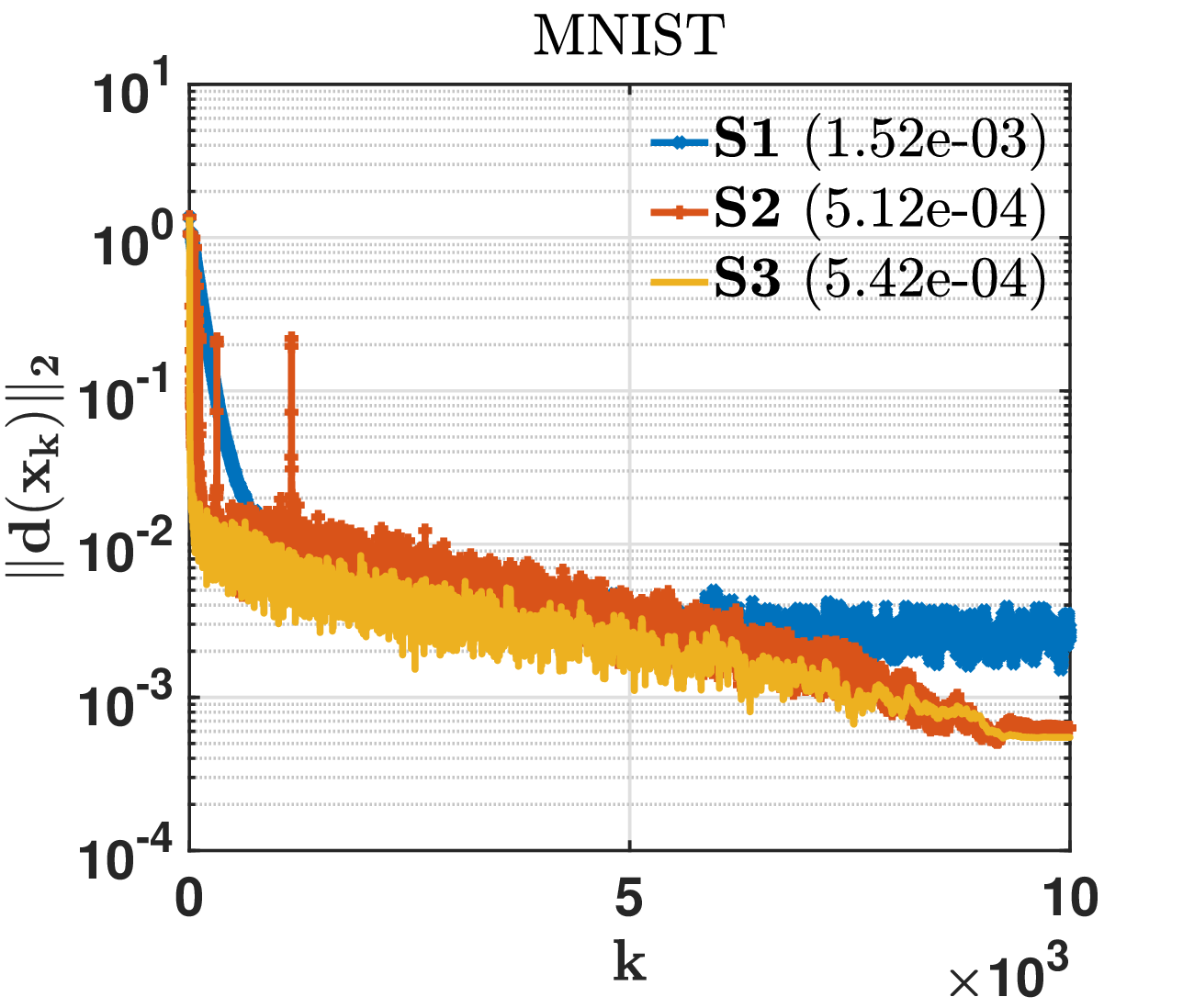}\hspace{3mm} 
    \includegraphics[scale=0.19]{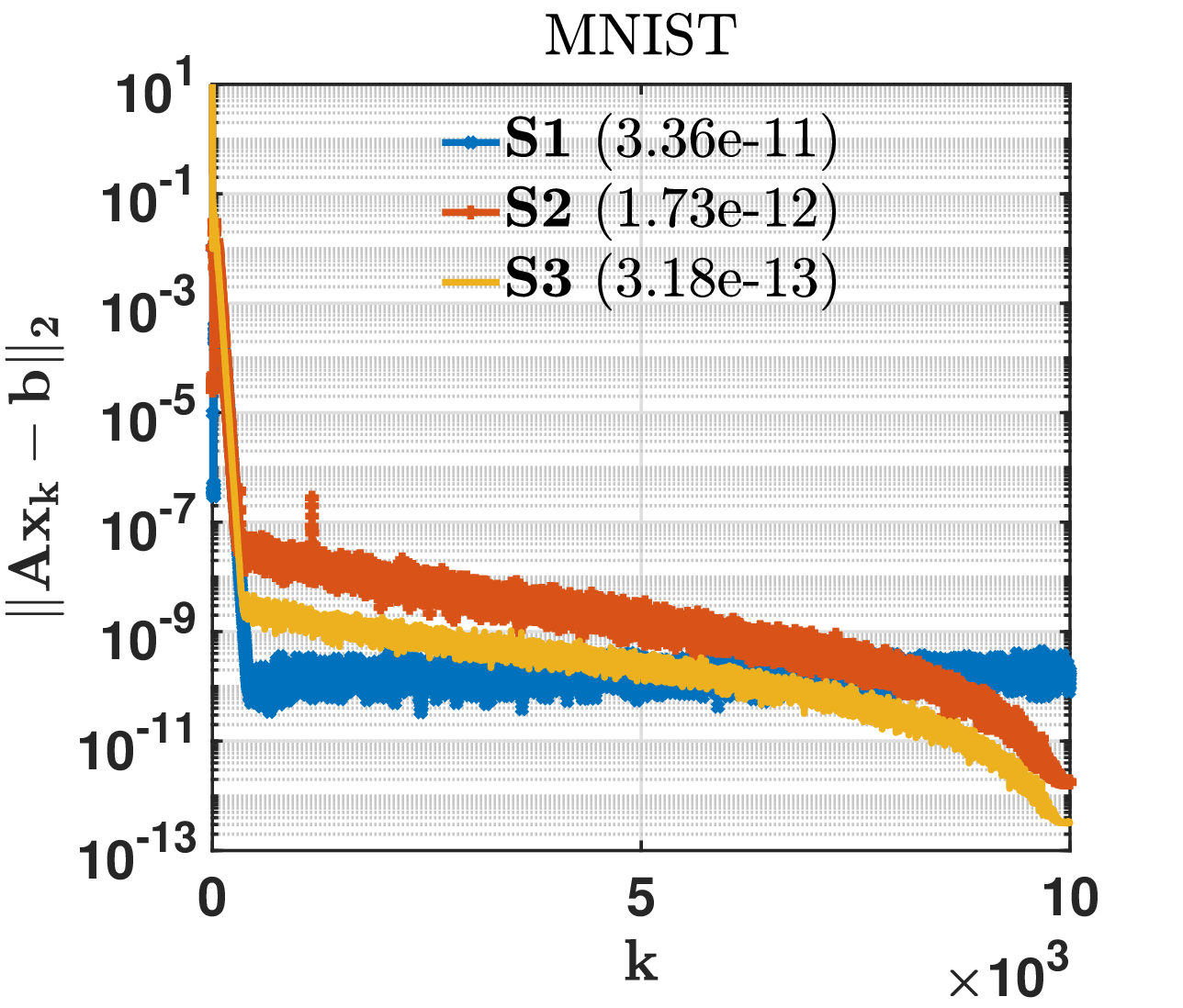}
    \\
    \includegraphics[scale=0.19]{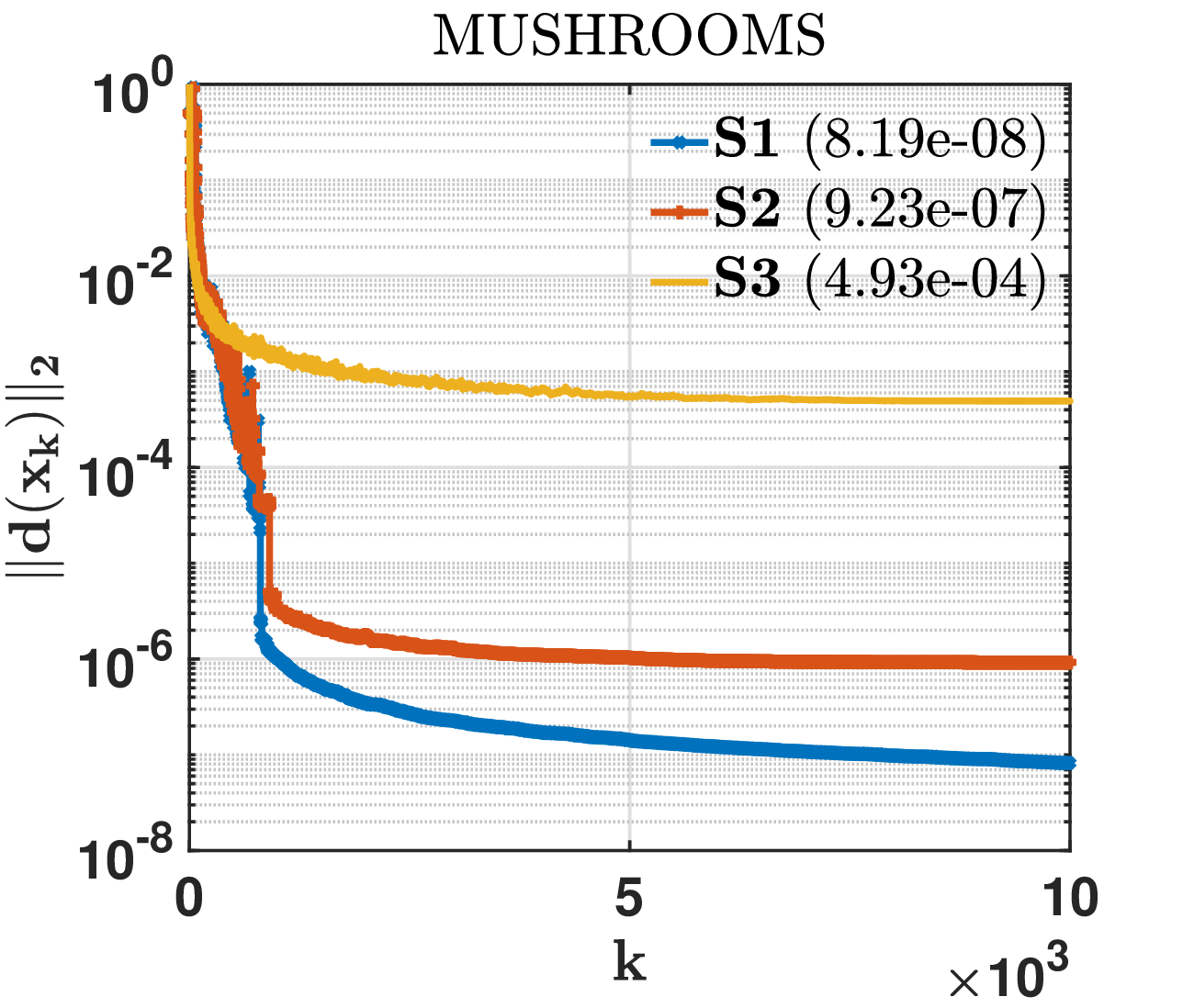}\hspace{3mm} 
    \includegraphics[scale=0.19]{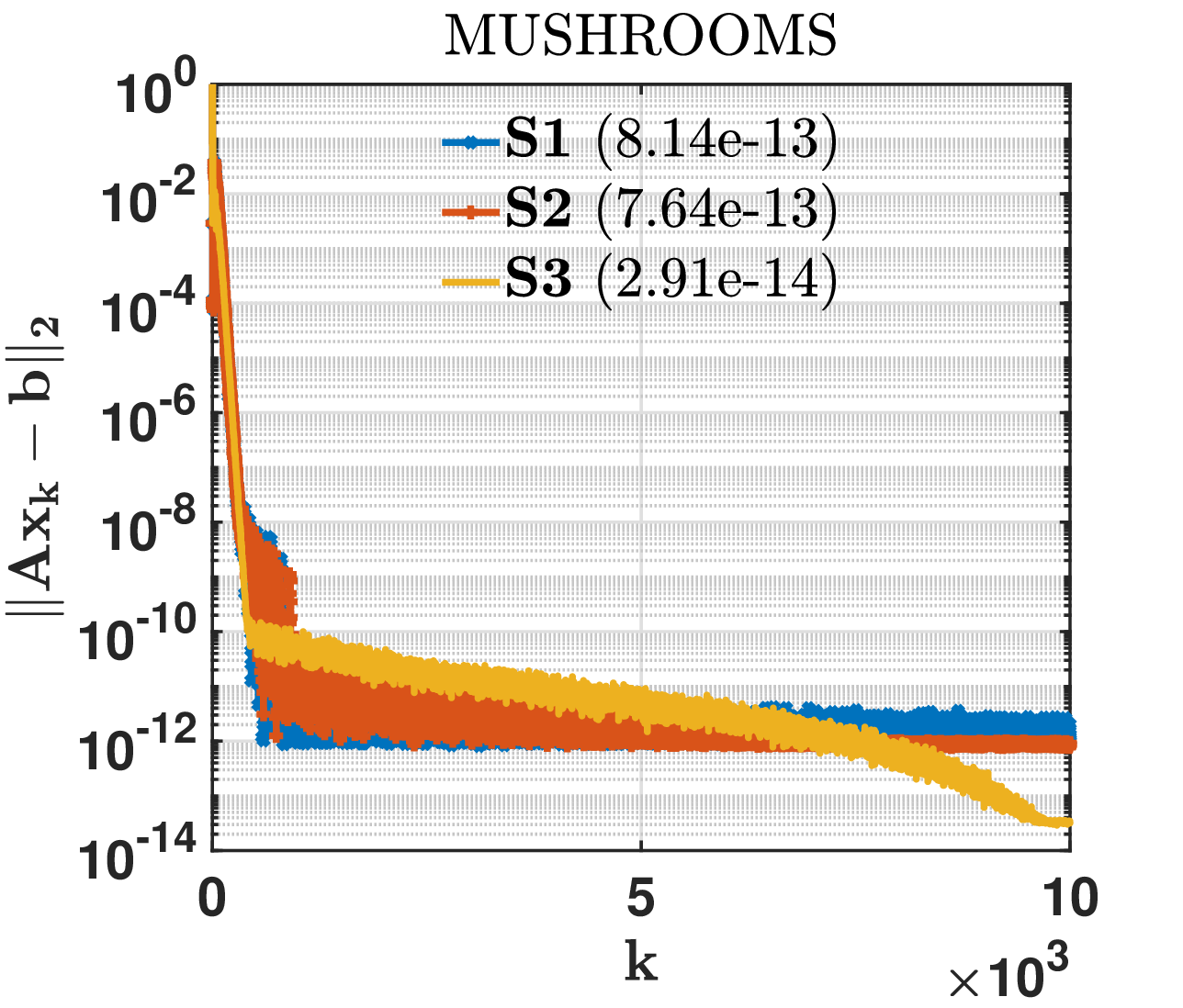}
  \end{subfigure}  
  \caption{\small Average value of $\|d(x_k)\|$ and $\|Ax_k - b\|$ vs. iterations obtained by \name \ with the strategies \textbf{S1}-\textbf{S3}  and  inexact projection.}
  \label{fig:inexact}
\end{figure}

\begin{table}[t]
\centering
\caption{\small {Step-size hyperparameters for PSG\_LECO and SQP\_ADAM for varying $N_b$.}}
\footnotesize
\label{tab:optimalParam2}
\begin{tabularx}{\textwidth}{|l|l|*{3}{>{\centering\arraybackslash}X|}}
\hline
\textbf{Dataset} & \textbf{Methods} & \textbf{$N_b=64$} & \textbf{$N_b=256$} & \textbf{$N_b=0.2N$}  \\ \hline

\textsc{\textbf{Mnist}} & 
\textbf{SQP\_ADAM} ($\bar\alpha$)&
0.001 & 0.001 & 0.001 \\
& \textbf{PSG\_LECO} (\textbf{S2}, $\gamma_0$)&
1 & 1 & 1  \\ \hline\hline
\textsc{\textbf{Mushrooms}} & 
\textbf{SQP\_ADAM} ($\bar\alpha$)&
1&  1&1\\
& \textbf{PSG\_LECO} (\textbf{S2}, $\gamma_0$)&
1 & 10 & 10  \\ \hline\hline
\end{tabularx}
\end{table}
\subsection{Comparison of PSG\_LECO with SQP\_ADAM}\label{Sec.PerfCompSQP}
 In this section we present results from the numerical comparison of \names with the procedure  \sqp\ \cite{curtis_2026}. Both algorithms \names and \sqp\ are first-order objective function-free procedures, and require a stochastic gradient at each iteration. As for the feasibility, \sqp\ was designed with exact projection. It employs a constant step-size $\bar \alpha$ and tuning is required. In~\cite{curtis_2026}, \sqp\ was run  with tuned step-sizes of order $10^{-4}$ to $10^{-2}$  for a batch size corresponding to $0.2N$.

We implemented \names with exact projection and the strategy \textbf{S2} where the update of $\delta_k$ in \eqref{BBdelay_s} was performed every $C=20$ iterations with an additional gradient evaluation is required for such an update. Focusing on \textsc{MNIST} and \textsc{Mushrooms} problems, \names and \sqp\  were tested with  $\gamma_0$ and $\bar\alpha $, respectively, from the set $\mathcal{T}_7 = \{10^{-4}, 10^{-3}, 10^{-2}, 10^{-1}, 1, 10\}$ and with batch size $N_k$ from $\mathcal{T}_3 = \{64,256, 0.2N\}$. The best-performing values of $\bar\alpha$ and $\gamma_0$ are reported in Table~\ref{tab:optimalParam2}, and   Figure~\ref{fig:PSG-SQP}  presents a comparison of the average optimality measure versus the iterations obtained with such parameters and exact projection; the value of $\|d(x_k)\|$ is displayed every 20 iterations for readability.

We note that both \names and  \sqp\ are reliable. In most cases, \names shows a faster decrease of the optimality measure in the initial phase of the process and is competitive with \sqp. \names achieves lower values of $\|d(x_k)\|$ on \textsc{Mnist}, a significantly lower optimality measure for $N_b=64$ on \textsc{Mushrooms}, and satisfactory optimality measures for the remaining mini-batch sizes ($N_b= 256$ and $0.2N$) on \textsc{Mushrooms} though \sqp\ is more accurate. This suggests the selection \textbf{S2} of the step-size compares well with the scaling and momentum formula in \sqp.

\begin{figure}[t]
\centering
\begin{subfigure}[t]{\textwidth}
\centering
\includegraphics[scale=0.19]{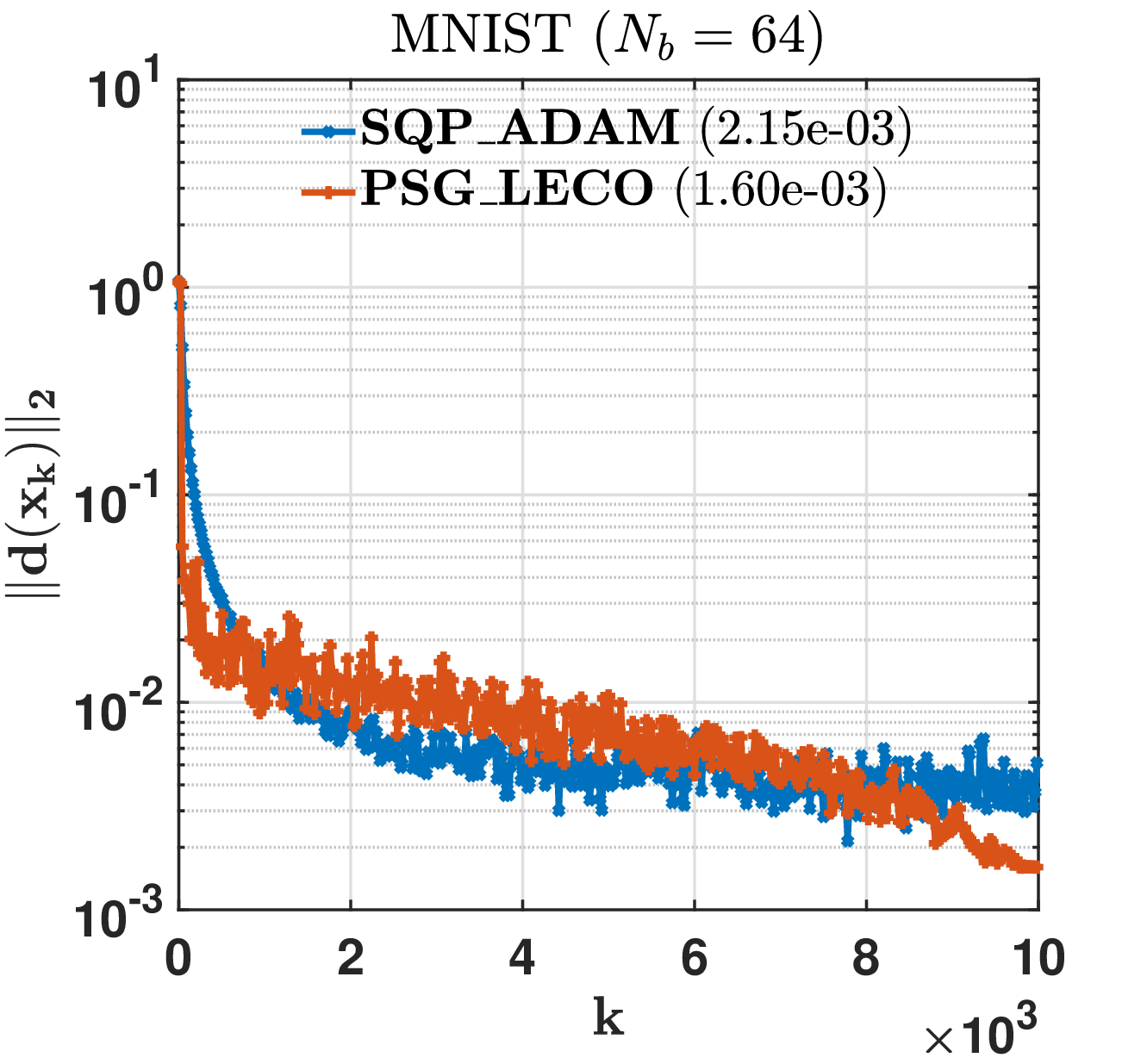}\hspace{-3mm}  
\includegraphics[scale=0.19]{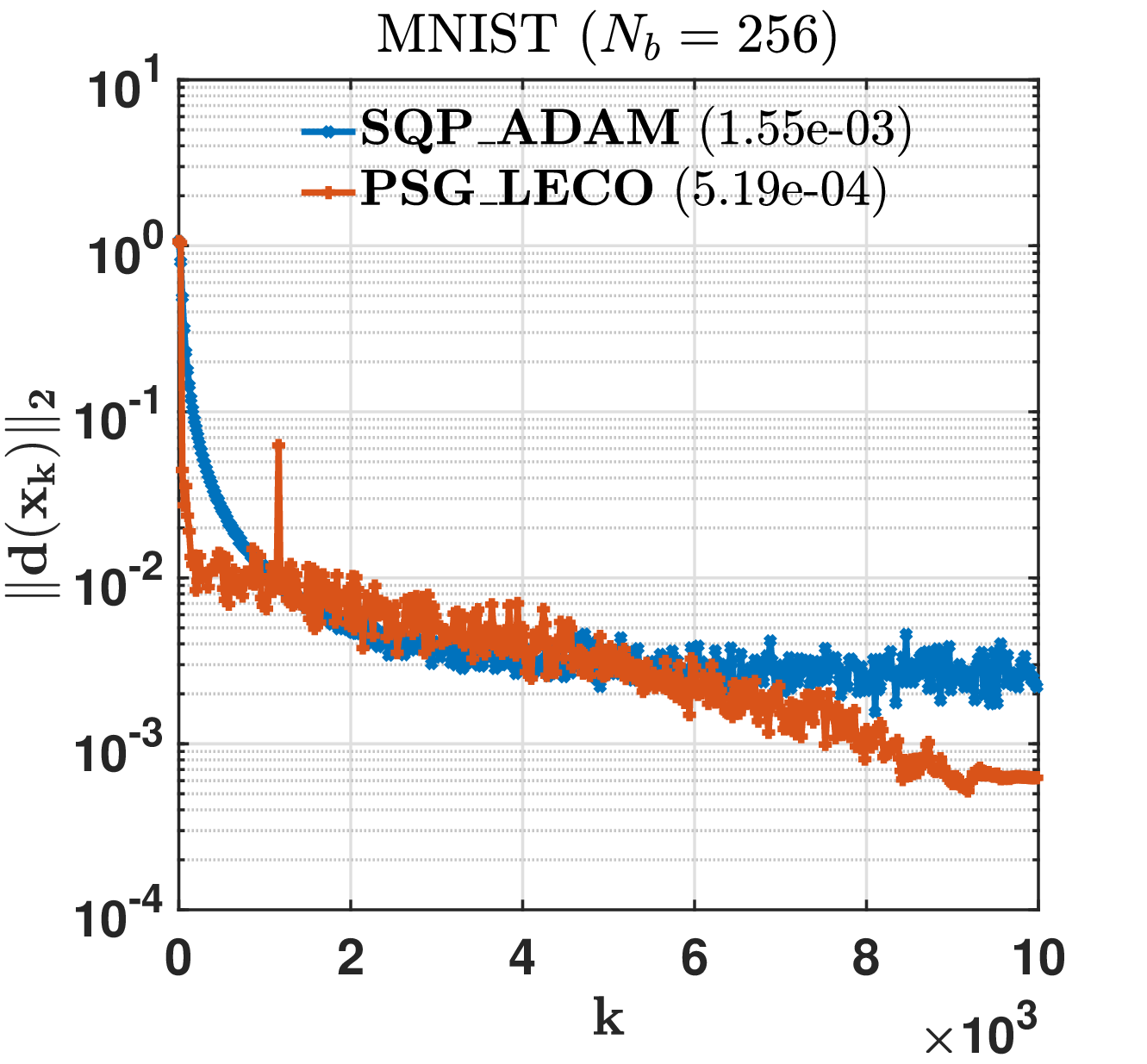}\hspace{-3mm}
\includegraphics[scale=0.19]{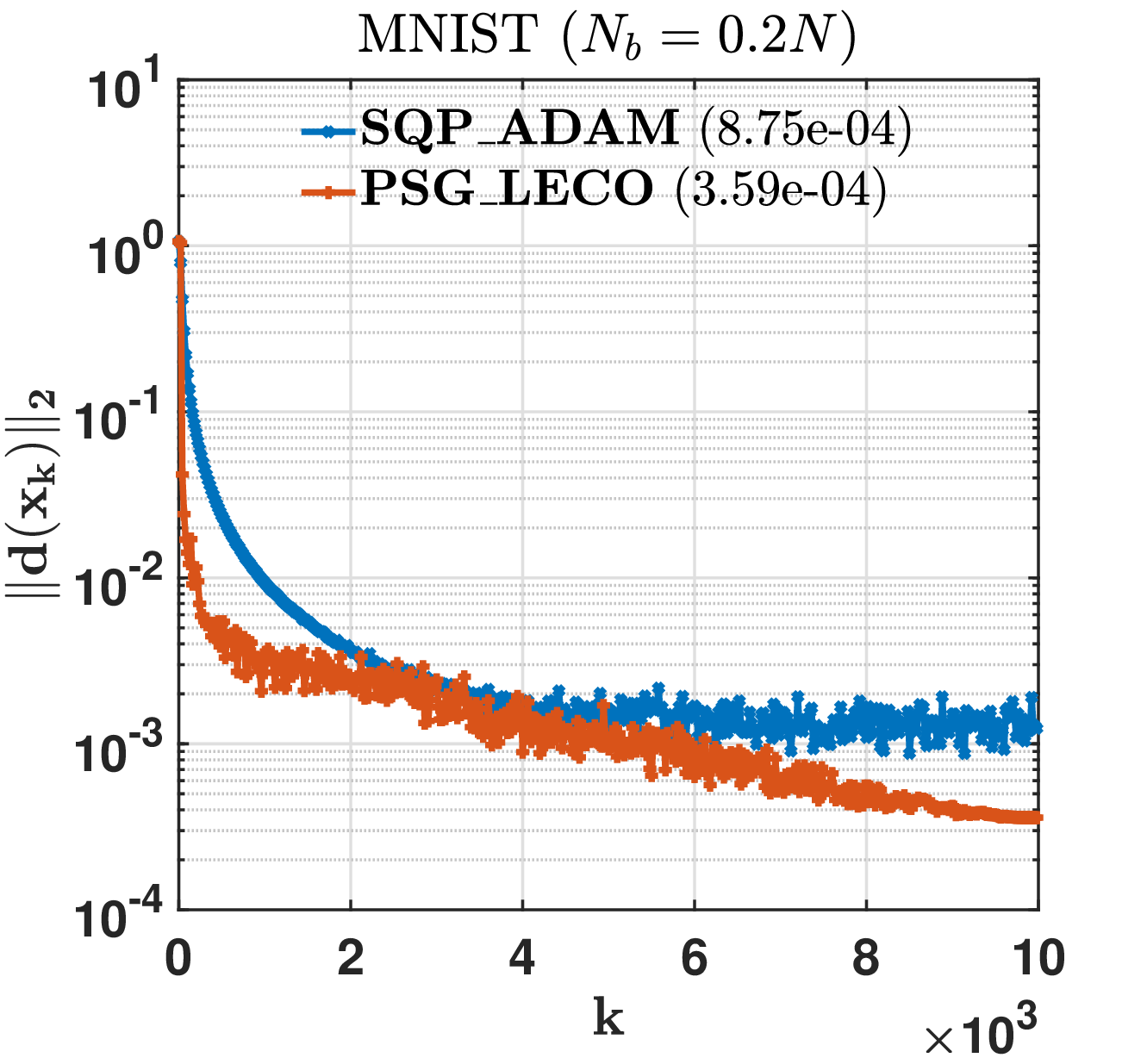}
\end{subfigure}  
\medskip
\begin{subfigure}[t]{\textwidth}
\centering
\includegraphics[scale=0.19]{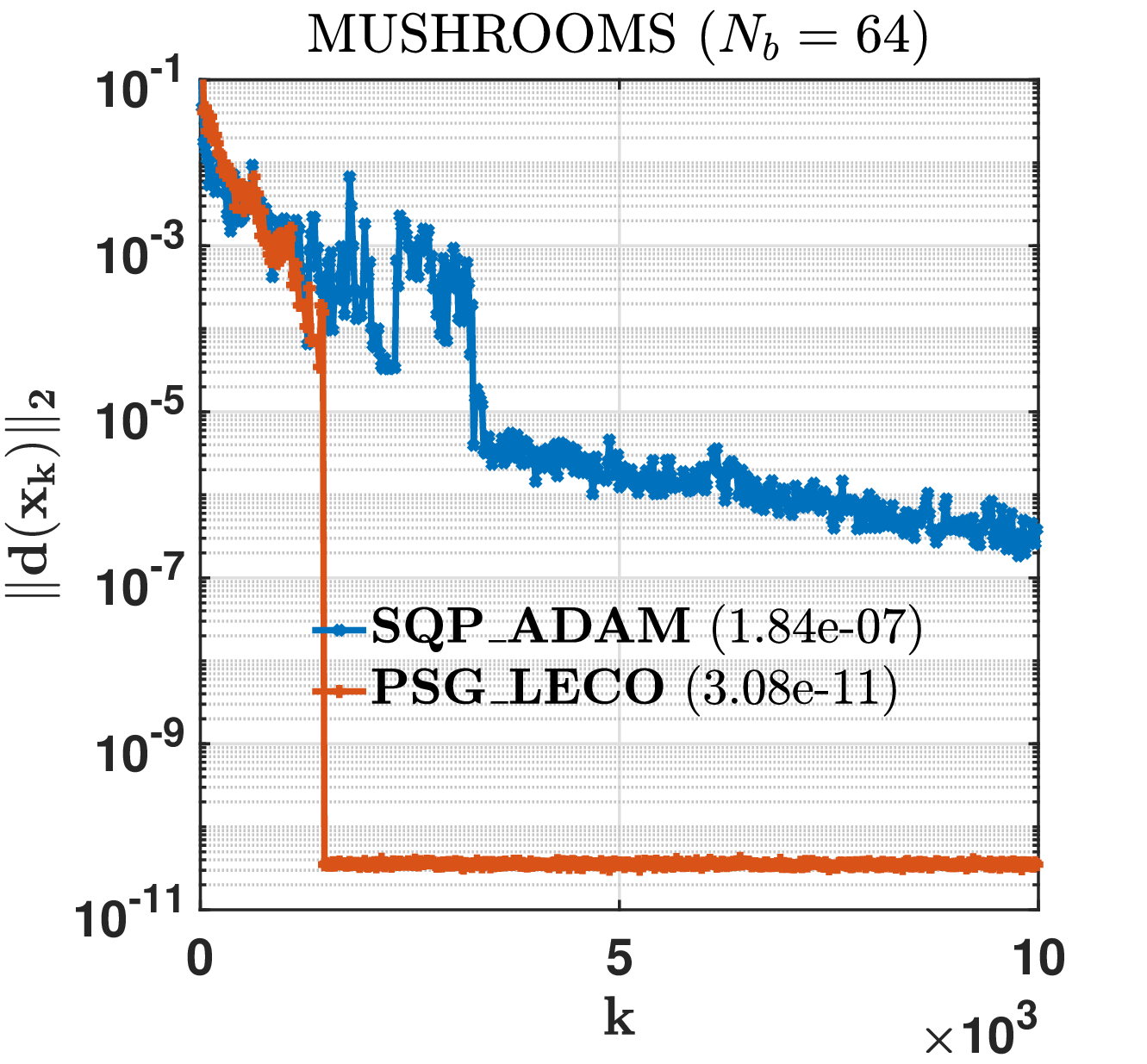}\hspace{-3mm} 
\includegraphics[scale=0.19]{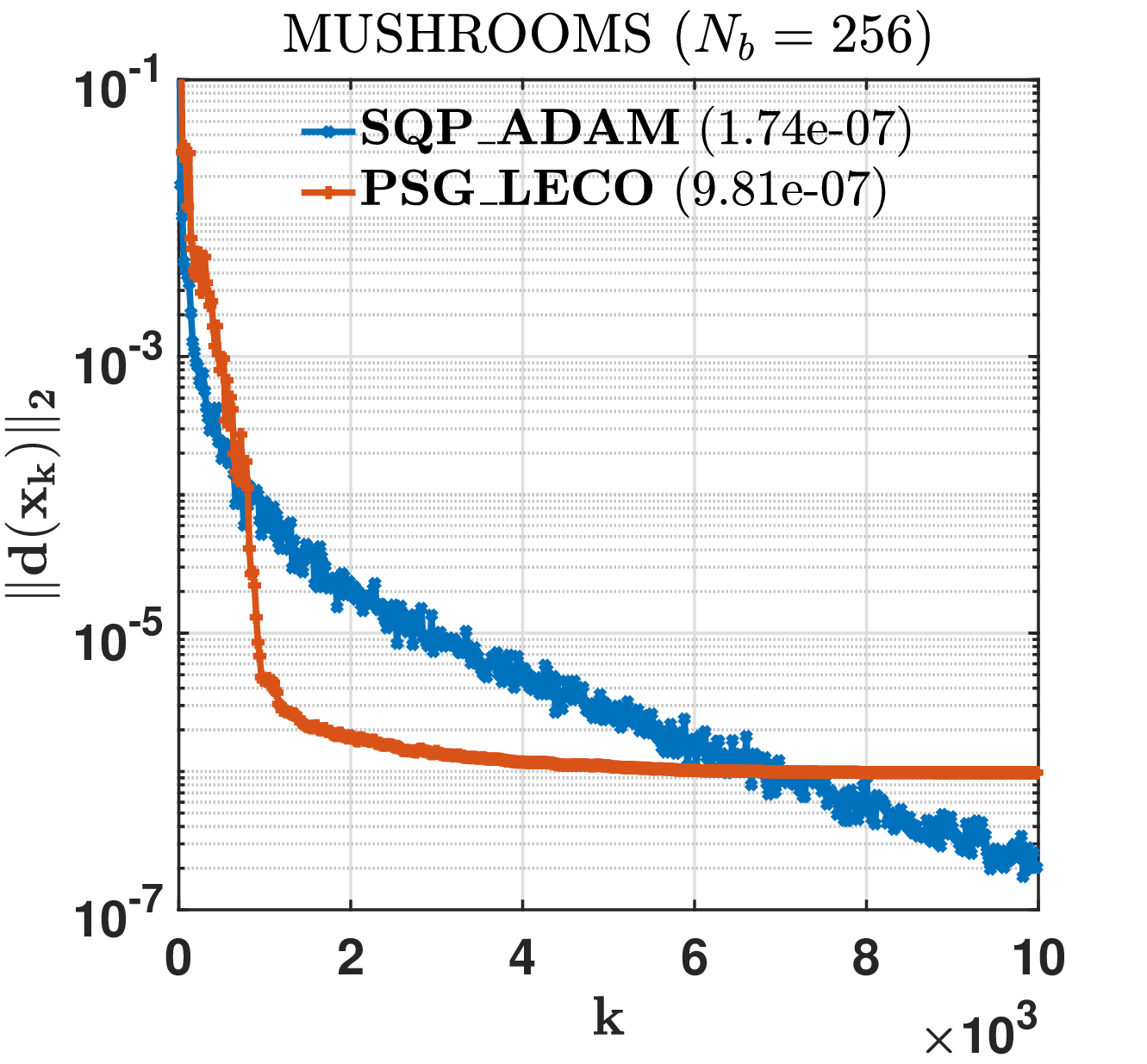}\hspace{-3mm} 
\includegraphics[scale=0.19]{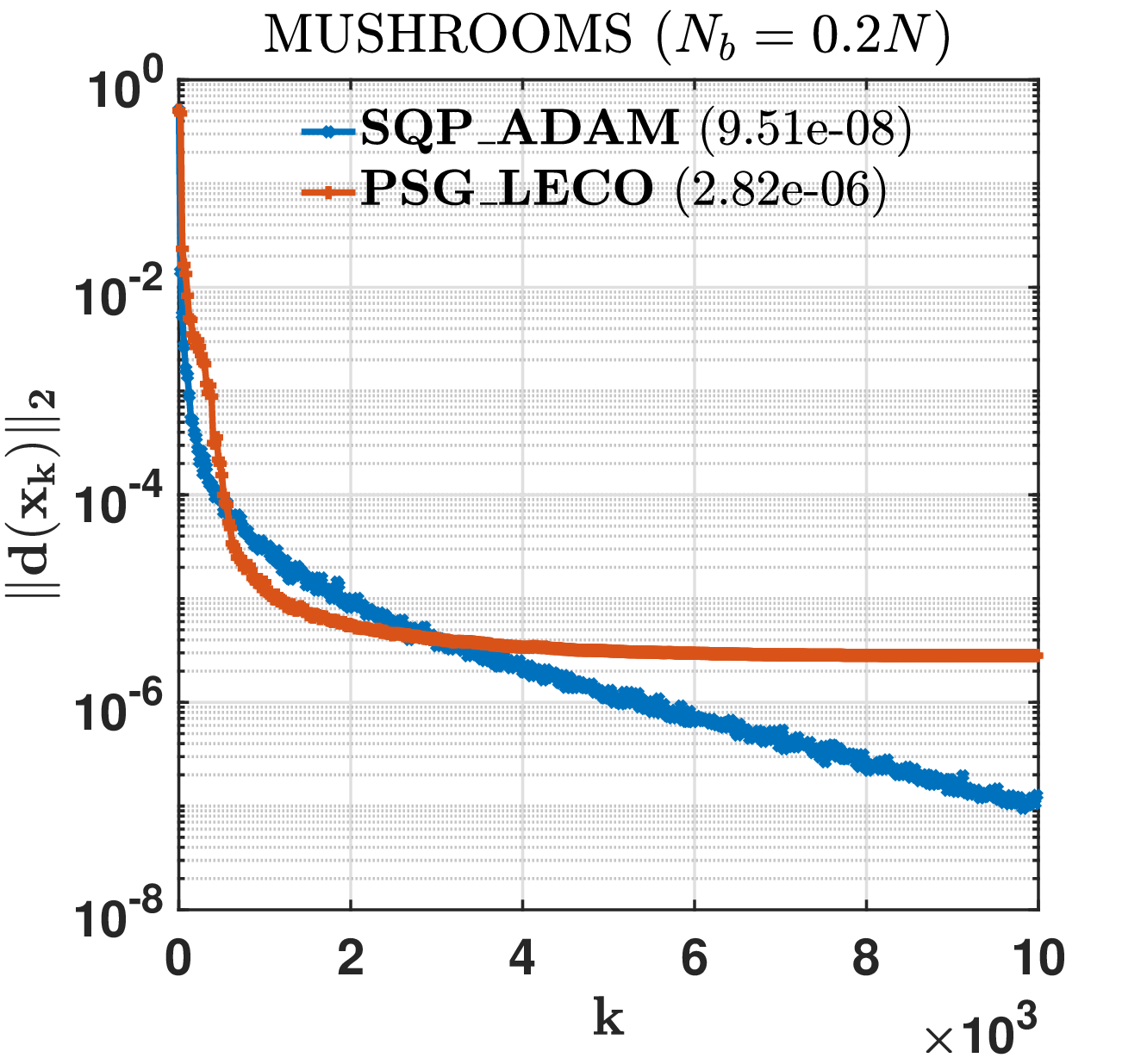}
\end{subfigure}
    \caption{\small Average values of $\|d(x_k)\|$ vs. iterations obtained by  \sqp\ Algorithm and \names Algorithm with strategy  \textbf{S2} and the exact projection.}
    \label{fig:PSG-SQP}
\end{figure}

\subsection{Comparison of PSG\_LECO with IPAS}\label{Sec.PerfCompIPAS}

We conclude our numerical validation by comparing our method with Algorithm \ipas\ \cite{krejic2025ipas}. 
Both methods use inexact projections; unlike \name, \ipas\ uses a stochastic line search procedure, and thus function evaluations, that monitors decrease at each iteration and dynamically adjusts the batch size.
To balance computational cost and optimality progress, the batch size in \ipas\ is increased only when the line search test fails. This algorithm was tested using its authors' code with the hyperparameter settings from \cite{krejic2025ipas}. \names was tested using the strategy \textbf{S2} with $\gamma_0$ as in Table \ref{tab:optimalParam},  $\eta=0.5$,  $\mu_0=0.1$, $\rho=0.95$ in (\ref{resb}), and choosing $N_b$ from $\mathcal{T}_3 = \{64,256, 0.2N\}$.
Both methods used the same initial point $x_0=0$.

For \ipas, we report in Figure~\ref{fig:IPS_Nk} the evolution of the mini-batch size versus iterations, and observe that $N_b$ remains considerably smaller than $N$. In Figure~\ref{fig:PSG-IPS}, we display the values of $\|d(x_k)\|$ vs. iterations (top) and CPU time (bottom), plotting one value every 20 iterations for readability. From the first row, obtained with the fixed maximum number of iterations $k_{\max}=10^4$, we observe that \names\ achieves a significantly faster decrease and lower optimality values compared to \ipas. Analogous conclusions can be drawn from the second row that displays results in term of 165 seconds for  \textsc{Mnist} and 30 seconds for \textsc{Mushrooms}; the latter results were obtained setting the maximum number of iterations to $k_{\max}=7\cdot 10^4$ for \textsc{Mnist} and $k_{\max}=12\cdot 10^4 $ for \textsc{Mushroom} and plotting the results for the previously specified amount of cpu time  which the largest time budget common to all the experiments reported.}
\begin{figure}[H]
\centering
\begin{subfigure}{\textwidth}
\centering
\includegraphics[scale=0.22]{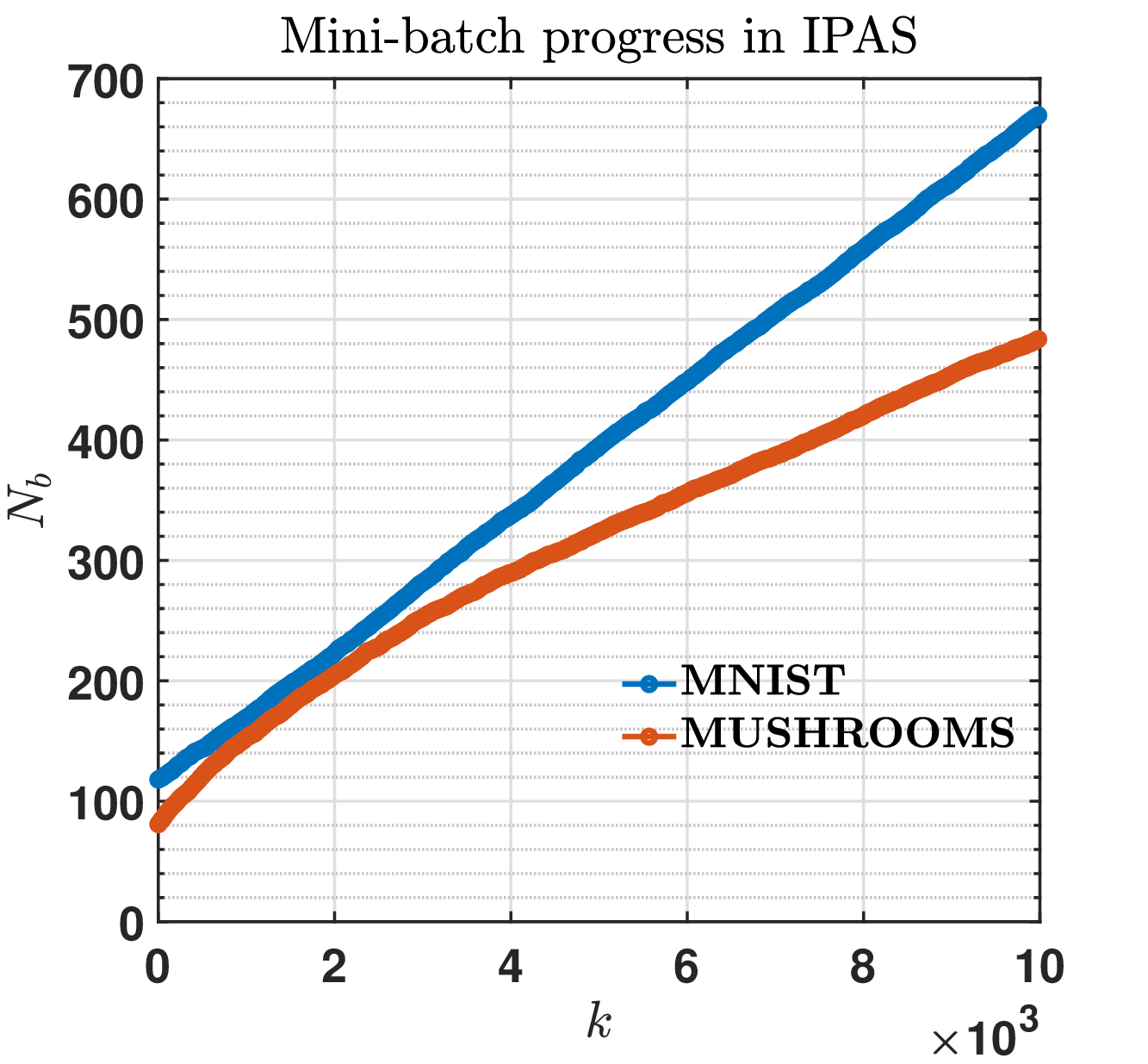}
\end{subfigure} 
\medskip
\caption{\small The progress of adaptive mini-batch in \ipas.}
\label{fig:IPS_Nk}
\end{figure}

\begin{figure}[H]
\centering
\begin{subfigure}{\textwidth}
\centering
\includegraphics[scale=0.2]{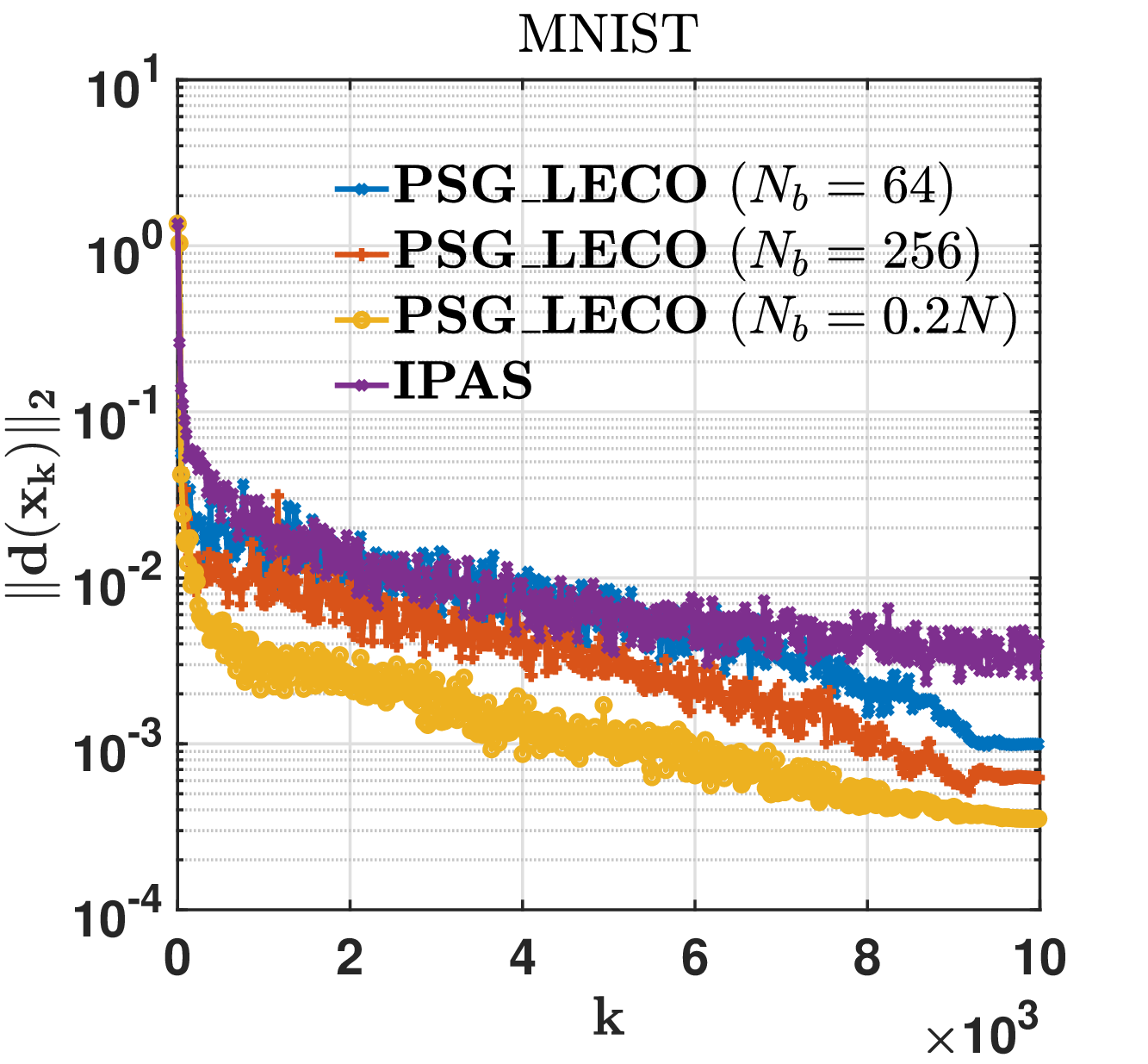}\hspace{-3mm}    
\includegraphics[scale=0.2]{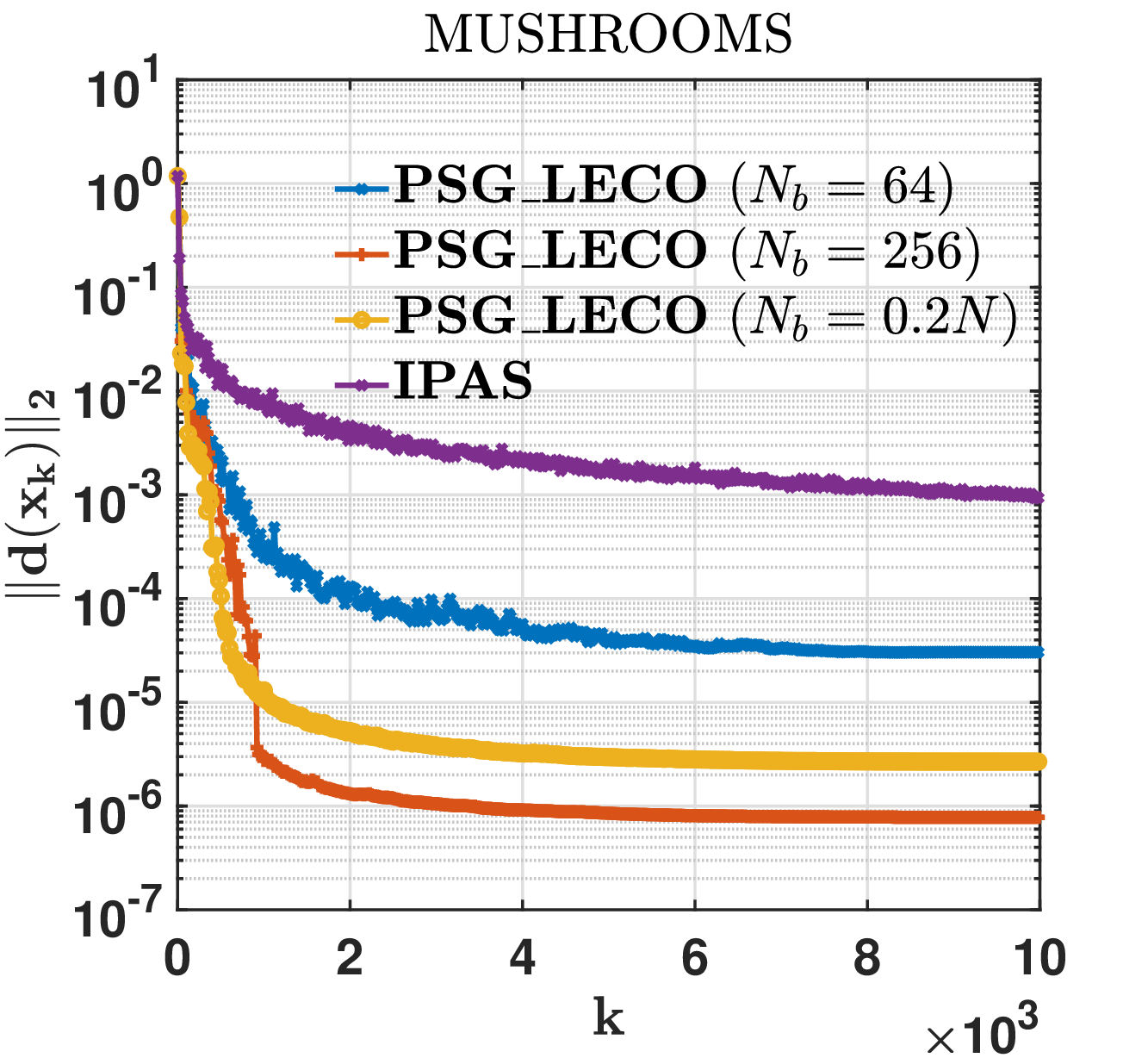}\hspace{-3mm} 
\includegraphics[scale=0.21]{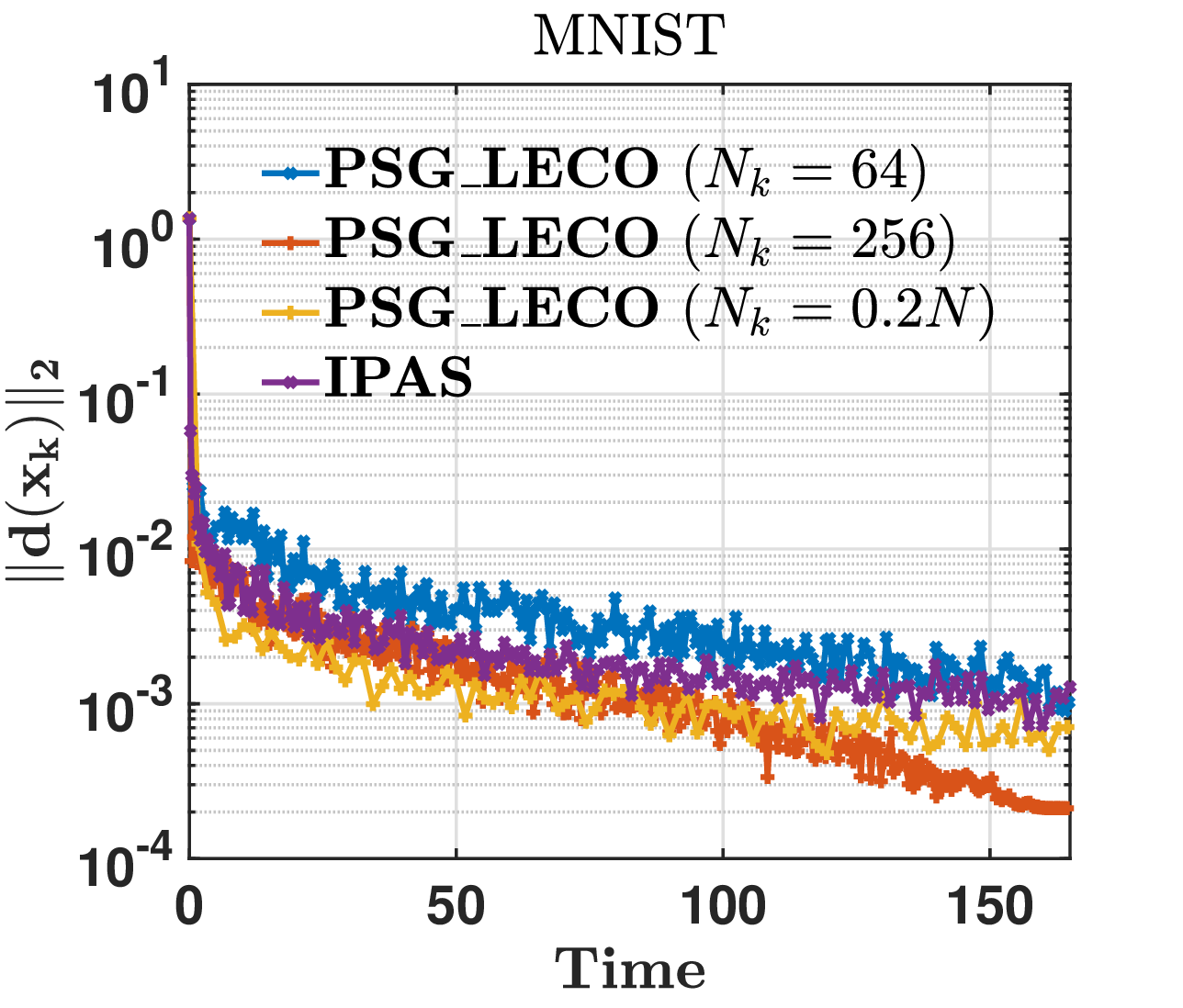}\hspace{-3mm}    
\includegraphics[scale=0.21]{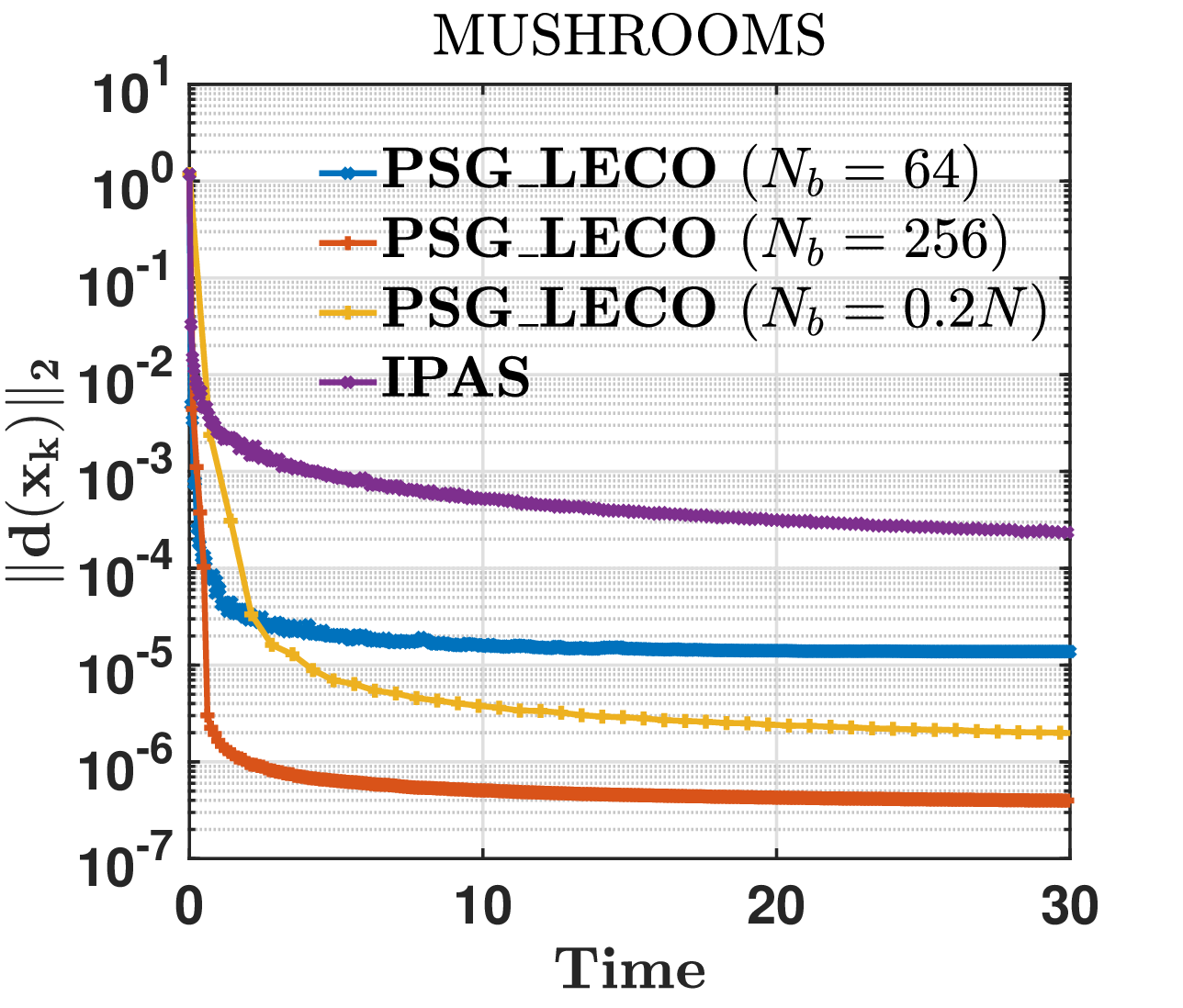}\hspace{-3mm}  
\end{subfigure} 
\medskip
\caption{\small Average values of $\|d(x_k)\|$ versus iterations (top) and CPU time (bottom) obtained by \ipas\ and \name\ using strategy \textbf{S2}.
}
\label{fig:PSG-IPS}
\end{figure}
The bottom panels show results obtained within a computational budget of 80 seconds for \textsc{Mnist} and 30 seconds for \textsc{Mushrooms}. For each $N_b$, ten seeds may result in different numbers of iterations within the same computational time. To enable a meaningful averaging across seeds,
for each run the computational time was recorded through the iterations. Then, a uniform time grid consisting of $10^4$  points was considered and 
the average value of the optimality at such points was computed  using  values obtained by linear interpolation. The plots confirm that
\names compares well with \ipas; namely, the performance of such a method does not suffer from the absence of a test for the acceptance of the iterates.
\section{Conclusions}\label{sec.con}
In this work, we proposed a projected stochastic gradient method for minimizing a function subject to deterministic linear constraints. Our method involves a projection map that can be evaluated either exactly or inexactly. Theoretical properties depend on the choice of a step-size related sequence and are equivalent to those established in the unconstrained setting. Numerical illustration of our procedure was presented to show its effectiveness.

\section*{Acknowledgments}
Natasa Krklec Jerinki{\'c} was supported by the Science Fund of the Republic of Serbia, GRANT No 7359, Project LASCADO. 
\\
Benedetta Morini and Mahsa Yousefi are members of the INdAM Research Group GNCS. The research that led to the present paper was partially supported by INDAM-GNCS through Progetti di Ricerca 2026.\\
The authors wish to thank Qi Wang and Yulang Zhu for providing the code for \sqp, and Luka Rute{\v{s}}i{\'c} for providing the code for \ipas.
\section*{Data availability }
The datasets utilized in this research are publicly accessible and commonly employed benchmarks in the field of machine learning and numerical optimization, see \cite{gratton2025s2mpj,chang2011libsvm}.
\section*{Declarations}

\textbf{Conflict of interest}

The authors have no relevant financial or non-financial interests to disclose.


\end{document}